\providecommand{\U}[1]{\protect\rule{.1in}{.1in}}
\newtheorem{proposition}{Proposition}[section]
\newtheorem{theorem}[proposition]{Theorem}
\newtheorem{lemma}[proposition]{Lemma}
\newtheorem{definition}[proposition]{Definition}
\newtheorem{remark}[proposition]{Remark}
\newtheorem{condition}[proposition]{Condition}
\numberwithin{equation}{section}
\numberwithin{proposition}{section}
\newenvironment{proof}[1][Proof]{\noindent\textbf{#1.} }{\ \rule{0.5em}{0.5em}}
\begin{document}

\title{Large Deviations for Stochastic Partial Differential Equations Driven
by a Poisson Random Measure}
\author{Amarjit Budhiraja\thanks{%
Research supported in part by the National Science Foundation (DMS-1004418,
DMS-1016441), the Army Research Office (W911NF-10-1-0158) and the US-Israel
Binational Science Foundation (Grant 2008466).}, Jiang Chen and Paul Dupuis%
\thanks{%
Research supported in part by the National Science Foundation (DMS-1008331),
and the Air Force Office of Scientific Research (FA9550-09-1-0378,
FA9550-12-1-0399), and the Army Research Office (W911NF-09-1-0155).}}
\maketitle

\begin{abstract}
Stochastic partial differential equations driven by Poisson random measures
(PRM) have been proposed as models for many different physical systems,
where they are viewed as a refinement of a corresponding noiseless partial
differential equation (PDE). A systematic framework for the study of
probabilities of deviations of the stochastic PDE from the deterministic PDE
is through the theory of large deviations. The goal of this work is to
develop the large deviation theory for small Poisson noise perturbations of
a general class of deterministic infinite dimensional models. Although the
analogous questions for finite dimensional systems have been well studied,
there are currently no general results in the infinite dimensional setting.
This is in part due to the fact that in this setting solutions may have
little spatial regularity, and thus classical approximation methods for
large deviation analysis become intractable. The approach taken here, which
is based on a variational representation for nonnegative functionals of
general PRM, reduces the proof of the large deviation principle to
establishing basic qualitative properties for controlled analogues of the
underlying stochastic system. As an illustration of the general theory, we
consider a particular system that models the spread of a pollutant in a
waterway.\medskip

\noindent \textbf{AMS 2000 subject classifications:} Primary 60H15, 60F10;
secondary 37L55.\medskip

\noindent \textbf{Key words and phrases.} Stochastic partial differential
equation, Poisson random measure, large deviations, variational
representation, Freidlin-Wentzell asymptotics, diffusion equation with
Poisson point source.
\end{abstract}

%\begin{frontmatter}

%\end{frontmatter}

%%%%%%%%%%%%%%%%%%%%%%%%%%%%%%%%%%%%%%%%%%%%%%%%%%%%%%%%%%%%%%%%%%%%%%%%%%%

\section{Introduction}

Stochastic partial differential equations driven by Poisson random measures
arise in many different fields. For example, they have been used to develop
models for neuronal activity that account for synaptic impulses occurring
randomly, both in time and at different locations of a spatially extended
neuron. Other applications arise in chemical reaction-diffusion systems and
stochastic turbulence models. The starting point in all these application
areas are deterministic partial differential equations (PDE) that capture
the underlying physics. One then develops a stochastic evolution model
driven by a suitable Poisson noise process to take into account random
inputs or effects to the nominal deterministic dynamics. In typical settings
the solutions of these stochastic evolution equations are not smooth. In
fact in many applications of interest they are not even random fields (that
is, function valued), and therefore an appropriate framework is given
through the theory of generalized functions. A systematic theory of
existence and uniqueness of solutions (both weak and pathwise) for such
stochastic partial differential equations (SPDE) driven by Poisson random
measures has been developed in \cite{Kallianpur95}. Our objective in this
work is to study some large deviation problems associated with such
stochastic systems.

Large deviation properties of SPDE driven by infinite dimensional Brownian
motions (e.g. Brownian sheets) have been extensively studied. In a typical
such setting one considers a small parameter multiplying the noise term and
is interested in asymptotic probabilities of non-nominal behavior as the
parameter approaches zero. This is the classical Freidlin-Wentzell problem
that has been studied in numerous papers (see the references in \cite%
{buddupmar}). Earlier works on this family of problems were based on ideas
of \cite{Azen} and relied on discretizations and other approximations
combined with `super-exponential closeness' probability estimates. For many
models of interest, particularly those arising from fluid dynamics and
turbulence, developing the required exponential probability estimates is a
daunting task and consequently simpler alternative methods are of interest.
In recent years an approach based on certain variational representation
formulas for moments of nonnegative functionals of Brownian motions \cite%
{buddupmar} has been increasingly used for the study of the small noise
large deviation problem for Brownian motion driven infinite dimensional
systems \cite{BeMi, buddupmar, BuDuMa2, ChMi, DuDuGa, DuMi, Liu,MaSrSu,
ReZh1, ReZh2, RoZhZh, SrSu, WaDu, YaHo, zha3, zha4}. The main appealing
feature of this approach is that it completely bypasses
approximation/discretization arguments and exponential probability
estimates, and in their place essentially requires a basic qualitative
understanding of existence, uniqueness and stability (under `bounded'
perturbations) of certain controlled analogues of the underlying stochastic
dynamical system of interest.

Large deviation results for finite dimensional stochastic differential
equations with a Poisson noise term has been studied by several authors \cite%
{Wentz,LiPu,Dupuis97,deAc}. For infinite dimensional models with jumps, very
little is available. One exception is the paper \cite{RocZha} that obtains
large deviation results for an Ornstein-Uhlenbeck type process driven by an
infinite dimensional L\'{e}vy noise. One reason there is relatively little
work in the Poisson noise setting is that approximation arguments that one
uses for Brownian noise models become much more onerous in the Poisson
setting, and for general infinite dimensional models the approach of \cite%
{Azen} becomes intractable.

With the expectation that it would prove useful for the study of large
deviations for SPDEs driven by Poisson Random Measures (PRM), the paper \cite%
{BDM09} developed a variational representation, for moments of non negative
functionals of PRMs, which is analogous to the representation given in \cite%
{buddup, buddupmar} for the Brownian motion case. The paper \cite{BDM09}
also obtained large deviation results for a basic model of a finite
dimensional jump-diffusion to illustrate the applicability of this
variational representation for the study of large deviation problems for
models with jumps. However the feasibility of this approach for the study of
complex infinite dimensional stochastic dynamical systems driven by Poisson
random measures has not been addressed to date.

The goal of this work is to demonstrate that the approach based on
variational representations that has been very successful for obtaining
large deviation results for system driven by Brownian noises works equally
well for SPDE models driven by PRMs. As in the Brownian case we study the
small noise problem, which in the Poisson setting means that the jump
intensity is $O(\epsilon ^{-1})$ and jump sizes are $O(\epsilon )$, where $%
\epsilon $ is a small parameter. We consider a rather general family of
models of the form
\begin{equation}
X_{t}^{\epsilon }=X_{0}^{\epsilon }+\int_{0}^{t}A(s,X_{s}^{\epsilon
})ds+\epsilon \int_{0}^{t}\int_{\mathbb{X}}G(s,X_{s-}^{\epsilon },v)\tilde{N}%
^{\epsilon ^{-1}}(dsdv),  \label{eq:sdeg}
\end{equation}%
where $N^{\epsilon ^{-1}}$ is a Poisson random measure on $[0,T]\times
\mathbb{X}$ with a $\sigma $-finite mean measure $\epsilon ^{-1}\lambda
_{T}\otimes \nu $, $\lambda _{T}$ is the Lebesgue measure on $[0,T]$ and $%
\tilde{N}^{\epsilon ^{-1}}([0,t]\times B)=N^{\epsilon ^{-1}}([0,t]\times
B)-\epsilon ^{-1}t\nu (B)$, $\forall B\in \mathcal{B}(\mathbb{X})$ with $\nu
(B)<\infty $, is the compensated Poisson random measure.

As noted previously, a key issue with a Poisson noise model is the selection
of an appropriate state space, since it is natural and often convenient for
there to be little spatial regularity. However, many of these foundational
issues have been satisfactorily resolved in \cite{Kallianpur95}, where
pathwise existence and uniqueness of SPDE of the form (\ref{eq:sdeg}) are
treated under rather general conditions. In the framework of \cite%
{Kallianpur95} solutions lie in the space of RCLL trajectories that take
values in the dual of a suitable nuclear space. This framework covers many
specific application settings that have been studied in the literature
(e.g., spatially extended neuron models, chemical reaction-diffusion
systems, etc.). In a parallel with the case of Brownian noise, one finds
that the estimates needed for establishing the well-posedness of the
equation are precisely the ones that are key for the proof of the large
deviation result as well.

The paper is organized as follows. We begin in Section \ref{prelim} with
some background results. The variational representation from \cite{BDM09} is
recalled and also a general large deviation result established in that paper
is presented. Also summarized are basic existence and uniqueness results
from \cite{Kallianpur95} for SPDEs with solutions in the duals of Countably
Hilbertian Nuclear Spaces (CHNS). In Section \ref{sec:three} we study the
small noise problem and state verifiable conditions on the model data in (%
\ref{eq:sdeg}) under which a large deviations principle holds. Section \ref%
{sec:eg} considers a particular system designed to model the spread of a
pollutant in a waterway, and verifies all the conditions assumed on (\ref%
{eq:sdeg}). Finally, the Appendix collects some auxiliary results.

The following notation will be used. For a topological space $\mathcal{E}$,
denote the corresponding Borel $\sigma $-field by $\mathcal{B}(\mathcal{E})$%
. We will use the symbol \textquotedblleft $\Rightarrow $\textquotedblright\
to denote convergence in distribution. Let $\mathbb{N},\mathbb{N}_{0},%
\mathbb{Z},\mathbb{R},\mathbb{R}_{+},\mathbb{R}^{d}$ denote the set of
positive integers, non-negative integers, integers, real numbers, positive
real numbers, and $d$-dimensional real vectors respectively. For a Polish
space $\mathbb{X}$, denote by $C([0,T]:\mathbb{X})$ and $D([0,T]:\mathbb{X})$
the space of continuous functions and right continuous functions with left
limits from $[0,T]$ to $\mathbb{X}$, endowed with the uniform and Skorokhod
topology, respectively. For a metric space $\mathcal{E}$, denote by $M_{b}(%
\mathcal{E})$ and $C_{b}(\mathcal{E})$ the space of real bounded $\mathcal{B}%
(\mathcal{E})/\mathcal{B}(\mathbb{R})$-measurable maps and real continuous
bounded functions respectively. For a measure $\nu $ on $\mathcal{E}$ and a
Hilbert space $H$, let $L^{2}(\mathcal{E},\nu ;H)$ denote the space of
measurable functions $f$ from $\mathcal{E}$ to $H$ such that $\int_{\mathcal{%
E}}||f(v)||^{2}\nu (dv)<\infty $, where $||\cdot ||$ is the norm on $H$. For
a function $x:[0,T]\rightarrow \mathcal{E}$, we use the notation $x_{t}$ and
$x(t)$ interchangeably for the evaluation of $x$ at $t\in \lbrack 0,T]$. A
similar convention will be followed for stochastic processes. We say a
collection $\{X^{\epsilon }\}$ of $\mathcal{E}$-valued random variables is
tight if the distributions of $X^{\epsilon }$ are tight in $\mathcal{P}(%
\mathcal{E})$ (the space of probability measures on $\mathcal{E}$).
%A sequence of stochastic processes $\{X^\epsilon\}$ with values in $D([0,T]:\mathbb{X})$, where $\mathbb{X}$ is a Polish space, is said to be C-tight if it is tight and all cluster points are supported on $C([0,T]:\mathbb{X})$.

%
%
% Let $\{X^\epsilon, \epsilon >0\}\equiv \{X^\epsilon\}$ be a family of random
% variables defined on a probability space $(\Omega,\mathscr{F},\mathbb{P})$
% and taking values in a Polish space (i.e., a complete separable metric
% space) $\mathcal{E}$. Denote expectation with respect to $\mathbb{P}$ by $%
% \mathbb{E}$. The theory of large deviations is concerned with events $A$ for
% which probabilities $\mathbb{P}(X^\epsilon \in A)$ converge to zero
% exponentially fast as $\epsilon \rightarrow 0$. The exponential decay rate
% of such probabilities is typically expressed in terms of a ``rate function''
% $I$ mapping $\mathcal{E}$ into $[0,\infty]$.

A function $I: \mathcal{E} \rightarrow [0,\infty]$ is called a rate function
on $\mathcal{E}$, if for each $M < \infty$ the level set $\{x \in \mathcal{E}%
:I(x) \le M\}$ is a compact subset of $\mathcal{E}$.
% \begin{definition}[Large deviation principle]
% Let $I$ be a rate function on $\mathcal{E}$. The sequence $\{X^{\epsilon }\}$
% is said to satisfy the large deviation principle on $\mathcal{E}$ with rate
% function $I$ if the following two conditions hold.
%
% \begin{enumerate}
% \item \emph{Large deviation upper bound.} For each closed subset $F$ of $%
% \mathcal{E}$,
% \begin{equation*}
% \limsup_{\epsilon \rightarrow 0} \epsilon \log \mathbb{P}(X^\epsilon \in F)
% \le -I(F).
% \end{equation*}
%
% \item \emph{Large deviation lower bound.} For each open subset $G$ of $%
% \mathcal{E}$,
% \begin{equation*}
% \liminf_{\epsilon \rightarrow 0} \epsilon \log \mathbb{P}(X^\epsilon \in G)
% \ge -I(G).
% \end{equation*}
% \end{enumerate}
% \end{definition}
%
% If a sequence of random variables satisfies a large deviation principle with
% some rate function, then the rate function is unique. A closely related
% notion is that of a Laplace principle, which is defined as follows.
A sequence $\{X^{\epsilon }\}$ of $\mathcal{E}$ valued random variables is
said to satisfy the Laplace principle upper bound (respectively lower bound)
on $\mathcal{E}$ with rate function $I$ if for all $h\in C_{b}(\mathcal{E})$
\begin{equation*}
\limsup_{\epsilon \rightarrow 0}\epsilon \log \mathbb{E}\left\{ \exp \left[ -%
\frac{1}{\epsilon }h(X^{\epsilon })\right] \right\} \leq -\inf_{x\in
\mathcal{E}}\{h(x)+I(x)\},
\end{equation*}%
and, respectively,
\begin{equation*}
\liminf_{\epsilon \rightarrow 0}\epsilon \log \mathbb{E}\left\{ \exp \left[ -%
\frac{1}{\epsilon }h(X^{\epsilon })\right] \right\} \geq -\inf_{x\in
\mathcal{E}}\{h(x)+I(x)\}.
\end{equation*}%
The Laplace principle is said to hold for $\{X^{\epsilon }\}$ with rate
function $I$ if both the Laplace upper and lower bounds hold. It is well
known that when $\mathcal{E}$ is a Polish space, the family $\{X^\epsilon\}$
satisfies the Laplace principle upper (respectively lower) bound with a rate
function $I$ on $\mathcal{E}$ if and only if $\{X^\epsilon\}$ satisfies the
large deviation upper (respectively lower) bound for all closed sets
(respectively open sets) with the rate function $I$. For a proof of this
statement we refer the reader to Section 1.2 of \cite{Dupuis97}.

\section{Preliminaries}

\label{prelim}

\subsection{Poisson Random Measure and a Variational Representation}

\label{Sec:PRM}

Let $\mathbb{X}$ be a locally compact Polish space. Let $\mathcal{M}_{FC}(%
\mathbb{X})$ be the space of all measures $\nu$ on $(\mathbb{X}, \mathcal{B}(%
\mathbb{X}))$ such that $\nu(K) < \infty$ for every compact $K$ in $\mathbb{X%
}$. Endow $\mathcal{M}_{FC}(\mathbb{X})$ with the weakest topology such that
for every $f \in C_c(\mathbb{X})$ (the space of continuous functions with
compact support), the function $\nu \mapsto \langle f, \nu \rangle = \int_{%
\mathbb{X}} f(u) d \nu(u)$, $\nu \in \mathcal{M}_{FC}(\mathbb{X})$ is
continuous. This topology can be metrized such that $\mathcal{M}_{FC}(%
\mathbb{X})$ is a Polish space (see e.g. \cite{BDM09}). Fix $T \in (0,
\infty)$ and let $\mathbb{X}_T = [0,T]\times \mathbb{X}$. Fix a measure $\nu
\in \mathcal{M}_{FC}(\mathbb{X})$, and let $\nu_T = \lambda_T \otimes \nu$,
where $\lambda_T$ is Lebesgue measure on $[0, T]$.

We recall that a Poisson random measure $\mathbf{n}$ on $\mathbb{X}_T$ with
mean measure (or intensity measure) $\nu_T$ is a $\mathcal{M}_{FC}(\mathbb{X}%
_T)$ valued random variable such that for each $B \in \mathcal{B} (\mathbb{X}%
_T)$ with $\nu_T(B) < \infty$, $\mathbf{n}(B)$ is Poisson distributed with
mean $\nu_T(B)$ and for disjoint $B_1,...,B_k \in \mathcal{B} (\mathbb{X}_T)$%
, $\mathbf{n}(B_1),..., \mathbf{n}(B_k)$ are mutually independent random
variables (cf. \cite{Ikeda}). Denote by $\mathbb{P}$ the measure induced by $%
\mathbf{n}$ on $(\mathcal{M}_{FC}(\mathbb{X}_T), \mathcal{B}(\mathcal{M}%
_{FC}(\mathbb{X}_T)))$. Then letting $\mathbb{M} = \mathcal{M}_{FC}(\mathbb{X%
}_T)$, $\mathbb{P}$ is the unique probability measure on $(\mathbb{M},
\mathcal{B}(\mathbb{M}))$ under which the canonical map, $N : \mathbb{M}%
\rightarrow\mathbb{M},N(m) \doteq m$, is a Poisson random measure with
intensity measure $\nu_T$. With applications to large deviations in mind, we
also consider, for $\theta > 0$, probability measures $\mathbb{P}_\theta$ on
$(\mathbb{M}, \mathcal{B}(\mathbb{M}))$ under which $N$ is a Poisson random
measure with intensity $\theta\nu_T$ . The corresponding expectation
operators will be denoted by $\mathbb{E}$ and $\mathbb{E}_\theta$,
respectively. We now present a variational representation, obtained in \cite%
{BDM09}, for $-\log \mathbb{E}_\theta( \exp [-F(N)])$, where $F \in M_b(%
\mathbb{M})$, in terms of a Poisson random measure constructed on a larger
space. We begin by describing this construction.

The analysis of large deviation properties for a process such as (\ref%
{eq:sdeg}) is simplified considerably by a convenient control representation
for the exponential integrals appearing in the Laplace principle. In
contrast with the case of Brownian motion, the formulation of a useful
representation is not immediate for Poisson noise. With a Poisson random
measure, one needs a control that alters the intensity at time $t$ and for
jump type $x$ from that of the underlying PRM to essentially any value in $%
[0,\infty )$ in a non-anticipating fashion. To accommodate this form of
control, we augment the space of jump times and jump types by a variable $%
r\in \lbrack 0,\infty )$, and consider in place of the original PRM one
whose intensity is a product of $\nu _{T}$ and Lebesgue measure on $r$. The
desired jump intensities can then be obtained by \textquotedblleft
thinning\textquotedblright\ this variable.

Thus we let $\mathbb{Y}=\mathbb{X}\times \lbrack 0,\infty )$ and $\mathbb{Y}%
_{T}=[0,T]\times \mathbb{Y}$. Let $\bar{\mathbb{M}}=\mathcal{M}_{FC}(\mathbb{%
Y}_{T})$ and let $\bar{\mathbb{P}}$ be the unique probability measure on $(%
\bar{\mathbb{M}},\mathcal{B}(\bar{\mathbb{M}}))$ under which the canonical
map, $\bar{N}:\bar{\mathbb{M}}\rightarrow \bar{\mathbb{M}},\bar{N}(m)\doteq
m $, is a Poisson random measure with intensity measure $\bar{\nu}%
_{T}=\lambda _{T}\otimes \nu \otimes \lambda _{\infty }$, with $\lambda
_{\infty }$ Lebesgue measure on $[0,\infty )$. The corresponding expectation
operator will be denoted by $\bar{\mathbb{E}}$. Let $\mathcal{F}_{t}\doteq
\sigma \{\bar{N}((0,s]\times A):0\leq s\leq t,A\in \mathcal{B}(\mathbb{Y})\}$%
, and let $\bar{\mathcal{F}}_{t}$ denote the completion under $\bar{\mathbb{P%
}}$. We denote by $\bar{\mathcal{P}}$ the predictable $\sigma $-field on $%
[0,T]\times \bar{\mathbb{M}}$ with the filtration $\{\bar{\mathcal{F}}%
_{t}:0\leq t\leq T\}$ on $(\bar{\mathbb{M}},\mathcal{B}(\bar{\mathbb{M}}))$.
Let $\bar{\mathcal{A}}$ be the class of all $(\bar{\mathcal{P}}\otimes
\mathcal{B}(\mathbb{X}))/\mathcal{B}[0,\infty )$-measurable maps $\varphi :%
\mathbb{X}_{T}\times \bar{\mathbb{M}}\rightarrow \lbrack 0,\infty )$. For $%
\varphi \in \bar{\mathcal{A}}$, define a counting process $N^{\varphi }$ on $%
\mathbb{X}_{T}$ by
\begin{equation}
N^{\varphi }((0,t]\times U)=\int_{(0,t]\times U}\int_{(0,\infty
)}1_{[0,\varphi (s,x)]}(r)\bar{N}(dsdxdr),\quad t\in \lbrack 0,T],U\in
\mathcal{B}(\mathbb{X}).  \label{Eqn: control}
\end{equation}%
$N^{\varphi }$ is then the controlled random measure, with $\varphi $
selecting the intensity for the points at location $x$ and time $s$, in a
possibly random but non-anticipating way. When $\varphi (s,x,\bar{m})\equiv
\theta \in (0,\infty )$, we write $N^{\varphi }=N^{\theta }$. Note that $%
N^{\theta }$ has the same distribution with respect to $\bar{\mathbb{P}}$ as
$N$ has with respect to $\mathbb{P}_{\theta }$. Define $l:[0,\infty
)\rightarrow \lbrack 0,\infty )$ by
\begin{equation*}
l(r)=r\log r-r+1,\quad r\in \lbrack 0,\infty ).
\end{equation*}%
For any $\varphi \in \bar{\mathcal{A}}$ the quantity
\begin{equation}
L_{T}(\varphi )=\int_{\mathbb{X}_{T}}l(\varphi (t,x,\omega ))\nu _{T}(dtdx)
\label{Ltdef}
\end{equation}%
is well defined as a $[0,\infty ]$-valued random variable. The following is
a representation formula proved in \cite{BDM09}.

\begin{theorem}
\label{VR: PRM} Let $F \in M_b(\mathbb{M})$. Then, for $\theta > 0$,
\begin{equation*}
-\log \mathbb{E}_\theta(e^{-F(N)}) = -\log \bar{\mathbb{E}}%
(e^{-F(N^\theta)}) = \inf_{\varphi \in \bar{\mathcal{A}}}\bar{\mathbb{E}}%
\left[\theta L_T(\varphi)+ F(N^{\theta \varphi}) \right].
\end{equation*}
\end{theorem}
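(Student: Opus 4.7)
The first equality is immediate from the thinning property of $\bar N$: taking $\varphi\equiv\theta$ in (\ref{Eqn: control}) realizes $N^\theta=\bar N(\cdot\times[0,\theta])$ as a PRM on $\mathbb{X}_T$ with intensity $\theta\nu_T$, which is the law of $N$ under $\mathbb{P}_\theta$. For the variational identity my plan is to combine the Donsker--Varadhan dual formula
$$-\log\bar{\mathbb{E}}[e^{-F(N^\theta)}] \;=\; \inf_{\mathbb{Q}\ll\bar{\mathbb{P}}}\bigl\{R(\mathbb{Q}\|\bar{\mathbb{P}}) + \mathbb{E}^{\mathbb{Q}}[F(N^\theta)]\bigr\}$$
(valid for any bounded measurable $F$) with a Girsanov-based correspondence between measures $\mathbb{Q}\ll\bar{\mathbb{P}}$ of finite relative entropy and controls $\varphi\in\bar{\mathcal{A}}$.

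For the inequality ``$\leq$'' I would fix $\varphi\in\bar{\mathcal{A}}$ bounded and bounded away from $0$, form the Dol\'eans--Dade exponential
$$\mathcal{E}_T(\varphi) \;=\; \exp\Bigl(\int_{(0,T]\times\mathbb{X}\times[0,\theta]}\log\varphi\,d\bar N-\int_{(0,T]\times\mathbb{X}\times[0,\theta]}(\varphi-1)\,d\bar\nu_T\Bigr),$$
a positive $\bar{\mathbb{P}}$-martingale under the chosen restrictions on $\varphi$, and define $\mathbb{Q}^\varphi$ by $d\mathbb{Q}^\varphi/d\bar{\mathbb{P}}=\mathcal{E}_T(\varphi)$. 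Girsanov's theorem for random measures then makes $\bar N$ have $\bar{\mathcal{F}}_t$-compensator $[\varphi\mathbf{1}_{[0,\theta]}(r)+\mathbf{1}_{(\theta,\infty)}(r)]\,d\bar\nu_T$ under $\mathbb{Q}^\varphi$. A direct computation from the density yields $R(\mathbb{Q}^\varphi\|\bar{\mathbb{P}}) = \theta\,\bar{\mathbb{E}}[L_T(\tilde\varphi)]$, while uniqueness of the martingale problem for marked point processes gives $\mathbb{E}^{\mathbb{Q}^\varphi}[F(N^\theta)] = \bar{\mathbb{E}}[F(N^{\theta\tilde\varphi})]$, where $\tilde\varphi\in\bar{\mathcal{A}}$ is a canonical-realization copy of $\varphi$ whose joint distribution with $\bar N$ under $\bar{\mathbb{P}}$ matches that of $\varphi$ with $\bar N$ under $\mathbb{Q}^\varphi$. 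Feeding these into the Donsker--Varadhan bound and taking the infimum over $\varphi$ gives the inequality; a truncation argument controlled by the growth of $l$ removes the boundedness restriction on $\varphi$. For the reverse inequality, given $\mathbb{Q}\ll\bar{\mathbb{P}}$ of finite relative entropy, the dual predictable projection yields a predictable $\psi\geq 0$ on $\mathbb{Y}_T$ such that $\psi\,d\bar\nu_T$ is the $\mathbb{Q}$-compensator of $\bar N$ and $R(\mathbb{Q}\|\bar{\mathbb{P}}) = \mathbb{E}^{\mathbb{Q}}[\int l(\psi)\,d\bar\nu_T]$. Setting $\varphi(s,x)=\theta^{-1}\int_0^\theta\psi(s,x,r)\,dr\in\bar{\mathcal{A}}$, Jensen's inequality applied to the convex function $l$ gives $\theta L_T(\varphi)\leq\int_{\mathbb{X}_T\times[0,\theta]}l(\psi)\,d\bar\nu_T$, while the same compensator-uniqueness argument produces a canonical $\bar{\mathbb{P}}$-realization with $\bar{\mathbb{E}}[F(N^{\theta\varphi})]=\mathbb{E}^{\mathbb{Q}}[F(N^\theta)]$; combining these and infimizing over $\mathbb{Q}$ closes the loop.

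The main obstacle is the canonical realization step, namely transferring joint laws from the tilted space $(\bar{\mathbb{M}},\mathbb{Q}^\varphi)$ back to $(\bar{\mathbb{M}},\bar{\mathbb{P}})$ while preserving predictability and the full joint distribution of the control and the controlled point process. This is precisely where the auxiliary coordinate $r\in[0,\infty)$ in the enlarged space $\mathbb{Y}$ earns its keep: any predictable nonnegative tilted intensity can be represented as a thinning of the reference $\bar N$ through a level set of the form $\{r\leq\theta\varphi\}$, so the extra $[0,\infty)$ factor supplies precisely the ambient randomness needed to realize arbitrary Poisson tilts as controlled thinnings of $\bar N$, with the matching of joint laws then following from standard uniqueness theorems for martingale problems of random measures.
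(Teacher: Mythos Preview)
The paper does not prove this theorem at all; it simply cites \cite{BDM09} (see the sentence immediately preceding the statement: ``The following is a representation formula proved in \cite{BDM09}''). So there is no in-paper proof to compare against.

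Your plan is essentially the argument carried out in \cite{BDM09}. The Donsker--Varadhan duality combined with Girsanov for random measures is exactly the engine there, and you have correctly identified both the entropy computation $R(\mathbb{Q}^\varphi\|\bar{\mathbb{P}})=\theta\,\mathbb{E}^{\mathbb{Q}^\varphi}[L_T(\varphi)]$ and the role of the auxiliary coordinate $r$ in realizing arbitrary predictable intensities as thinnings. Two points deserve care. First, in your ``$\leq$'' step the relative entropy comes out as $\theta\,\mathbb{E}^{\mathbb{Q}^\varphi}[L_T(\varphi)]$, not $\theta\,\bar{\mathbb{E}}[L_T(\varphi)]$; to land on the right-hand side of the claimed identity you must show that the \emph{joint} law of $(\varphi,N^\theta)$ under $\mathbb{Q}^\varphi$ coincides with that of a pair $(\tilde\varphi,N^{\theta\tilde\varphi})$ under $\bar{\mathbb{P}}$, so that both the cost and the $F$-term transfer simultaneously. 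This is precisely the ``canonical realization'' you flag, and in \cite{BDM09} it is handled via a weak-uniqueness/Jacod-type representation argument rather than by compensator identification alone (equality of compensators does not by itself force equality of laws when the intensity is adapted to a strictly larger filtration). Second, in your ``$\geq$'' step the averaging $\varphi(s,x)=\theta^{-1}\int_0^\theta\psi(s,x,r)\,dr$ together with Jensen is correct for the cost bound, but the matching $\bar{\mathbb{E}}[F(N^{\theta\varphi})]=\mathbb{E}^{\mathbb{Q}}[F(N^\theta)]$ again needs the realization machinery, not merely that the two processes share the compensator $\theta\varphi\,\nu_T$. With those two transfers made rigorous your outline is complete and matches the cited source.
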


\subsection{A General Large Deviation Result}

\label{Sec:thm}

In this section, we summarize the main large deviation result of \cite{BDM09}%
. Let $\{\mathcal{G}^{\epsilon }\}_{\epsilon >0}$ be a family of measurable
maps from $\mathbb{M}$ to $\mathbb{U}$, where $\mathbb{U}$ is some Polish
space. We present below a sufficient condition for a large deviation
principle to hold for the family $Z^{\epsilon }=\mathcal{G}^{\epsilon
}(\epsilon N^{\epsilon ^{-1}})$, as $\epsilon \rightarrow 0$. Define
\begin{equation}
S^{N}=\left\{ g:\mathbb{X}_{T}\rightarrow \lbrack 0,\infty ):L_{T}(g)\leq
N\right\} .  \label{eqn:SN}
\end{equation}%
A function $g\in S^{N}$ can be identified with a measure $\nu _{T}^{g}\in
\mathbb{M}$, defined by
\begin{equation*}
\nu _{T}^{g}(A)=\int_{A}g(s,x)\nu _{T}(dsdx),\quad A\in \mathcal{B}(\mathbb{X%
}_{T}).
\end{equation*}%
This identification induces a topology on $S^{N}$ under which $S^{N}$ is a
compact space. See the Appendix for a proof of this statement.
%The latter  is essentially a consequence of the super-linear growth of $l$ and  the lower semi-continuity property of relative entropy (see a proof in Lemma \ref{lem:cmpt} in the appendix).
Throughout we use this topology on $S^{N}$. Define $\mathbb{S}=\cup _{N\geq
1}S^{N}$, and let
\begin{equation*}
\mathcal{U}^{N}=\{\varphi \in \bar{\mathcal{A}}:\varphi (w)\in S^{N},\bar{%
\mathbb{P}}\ a.e.\,w\}.
\end{equation*}

The following condition will be sufficient to establish an LDP for a family $%
\{Z^{\epsilon }\}_{\epsilon >0}$ defined by $Z^{\epsilon }=\mathcal{G}%
^{\epsilon }(\epsilon N^{\epsilon ^{-1}})$. When applied to the SDE (\ref%
{eq:sdeg}) later on, $\mathcal{G}^{\epsilon }$ will be the mapping that
takes the PRM into $X^{\epsilon }$.

\begin{condition}
\label{Cond:LDP2} There exists a measurable map $\mathcal{G}^0 : \mathbb{M}%
\rightarrow \mathbb{U}$ such that the following hold.

\begin{enumerate}
\item For $N \in \mathbb{N}$, let $g_n, g \in S^N$ be such that $g_n
\rightarrow g$ as $n \rightarrow \infty$. Then
\begin{equation*}
\mathcal{G}^0\left(\nu_T^{g_n}\right)\rightarrow\mathcal{G}%
^0\left(\nu_T^{g}\right).
\end{equation*}

\item For $N\in \mathbb{N}$, let $\varphi _{\epsilon },\varphi \in \mathcal{U%
}^{N}$ be such that $\varphi _{\epsilon }$ converges in distribution to $%
\varphi $ as $\epsilon \rightarrow 0$. Then
\begin{equation*}
\mathcal{G}^{\epsilon }(\epsilon N^{\epsilon ^{-1}\varphi _{\epsilon
}})\Rightarrow \mathcal{G}^{0}\left( \nu _{T}^{\varphi }\right) .
\end{equation*}
\end{enumerate}
\end{condition}

The first condition requires continuity in the control for deterministic
controlled systems. The second condition is a law of large numbers result
for small noise controlled stochastic systems. In both cases we are allowed
to assume the controls take values in a compact set.

For $\phi \in \mathbb{U}$, define $\mathbb{S}_{\phi }=\left\{ g\in \mathbb{S}%
:\phi =\mathcal{G}^{0}(\nu _{T}^{g})\right\} $. Let $I:\mathbb{U}\rightarrow
\lbrack 0,\infty ]$ be defined by
\begin{equation}
I(\phi )=\inf_{g\in \mathbb{S}_{\phi }}\left\{ L_{T}(g)\right\} ,\quad \phi
\in \mathbb{U}.  \label{Eqn: I2}
\end{equation}%
By convention, $I(\phi )=\infty $ if $\mathbb{S}_{\phi }=\varnothing $.

The following theorem was established in \cite[Theorem 4.2]{BDM09}.

\begin{theorem}
\label{Thm:LDP01} For $\epsilon >0$, let $Z^{\epsilon }$ be defined by $%
Z^{\epsilon }=\mathcal{G}^{\epsilon }(\epsilon N^{\epsilon ^{-1}})$, and
suppose that Condition \ref{Cond:LDP2} holds. Then $I$ defined as in (\ref%
{Eqn: I2}) is a rate function on $\mathbb{U}$ and the family $\{Z^{\epsilon
}\}_{\epsilon >0}$ satisfies a large deviation principle with rate function $%
I$.
\end{theorem}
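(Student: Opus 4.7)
The plan is to deduce both halves of the Laplace principle, plus the compactness of level sets, directly from the variational representation of Theorem~\ref{VR: PRM} applied with $\theta=\epsilon^{-1}$ and $F(N) = h(\mathcal{G}^\epsilon(\epsilon N))/\epsilon$ for $h\in C_b(\mathbb{U})$. This gives the clean identity
\begin{equation*}
-\epsilon\log\mathbb{E}\,e^{-h(Z^\epsilon)/\epsilon}
=\inf_{\varphi\in\bar{\mathcal{A}}}\bar{\mathbb{E}}\!\left[L_T(\varphi)+h\bigl(\mathcal{G}^\epsilon(\epsilon N^{\epsilon^{-1}\varphi})\bigr)\right],
\end{equation*}
and the strategy is to bound the right-hand side above and below using Condition~\ref{Cond:LDP2}. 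Throughout I will use two facts about $L_T$ that essentially come with the definition of $S^N$: the functional $g\mapsto L_T(g)$ is lower semicontinuous on $\mathbb{S}$ in the topology inherited from $\mathbb{M}$, and each $S^N$ is compact (the latter is proved in the Appendix, by the text preceding the statement).

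For the compactness of level sets, fix $M<\infty$ and a sequence $\phi_n$ with $I(\phi_n)\le M$. Pick $g_n\in\mathbb{S}_{\phi_n}$ with $L_T(g_n)\le M+1/n$, so that $g_n\in S^{M+1}$. By compactness of $S^{M+1}$, a subsequence $g_{n_k}\to g\in S^{M+1}$; lower semicontinuity gives $L_T(g)\le M$, and Condition~\ref{Cond:LDP2}(1) yields $\phi_{n_k}=\mathcal{G}^0(\nu_T^{g_{n_k}})\to\mathcal{G}^0(\nu_T^g)=:\phi$, hence $I(\phi)\le L_T(g)\le M$. So $\{I\le M\}$ is compact.

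For the Laplace upper bound I need $\liminf_\epsilon[-\epsilon\log\mathbb{E}\,e^{-h(Z^\epsilon)/\epsilon}]\ge \inf_\phi\{h(\phi)+I(\phi)\}$. Let $\varphi^\epsilon$ be $\epsilon$-nearly-optimal in the infimum above. Since $h$ is bounded, we may restrict attention to $\varphi^\epsilon$ with $L_T(\varphi^\epsilon)\le 2\|h\|_\infty+1$ deterministically by truncating on the event where $L_T$ is large, so that $\varphi^\epsilon\in\mathcal{U}^N$ for a fixed $N$. Compactness of $S^N$ makes $\{\varphi^\epsilon\}$ tight as $\bar{\mathbb{M}}$-valued random variables; extract a subsequence with $\varphi^\epsilon\Rightarrow\varphi$. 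Condition~\ref{Cond:LDP2}(2) then gives $\mathcal{G}^\epsilon(\epsilon N^{\epsilon^{-1}\varphi^\epsilon})\Rightarrow\mathcal{G}^0(\nu_T^\varphi)$, and using joint convergence together with Fatou and lower semicontinuity of $L_T$,
\begin{equation*}
\liminf_\epsilon \bar{\mathbb{E}}\bigl[L_T(\varphi^\epsilon)+h(\mathcal{G}^\epsilon(\epsilon N^{\epsilon^{-1}\varphi^\epsilon}))\bigr]\ge \bar{\mathbb{E}}\bigl[L_T(\varphi)+h(\mathcal{G}^0(\nu_T^\varphi))\bigr]\ge \inf_\phi\{h(\phi)+I(\phi)\},
\end{equation*}
the last step by definition of $I$. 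For the Laplace lower bound, fix $\delta>0$ and choose $\phi^*$ with $h(\phi^*)+I(\phi^*)\le\inf\{h+I\}+\delta$, and then $g^*\in\mathbb{S}_{\phi^*}$ with $L_T(g^*)\le I(\phi^*)+\delta$. Use the deterministic control $\varphi^\epsilon\equiv g^*$ in the representation and apply Condition~\ref{Cond:LDP2}(2) with the (constant) sequence $\varphi=g^*$: then $\mathcal{G}^\epsilon(\epsilon N^{\epsilon^{-1}g^*})\Rightarrow\phi^*$, bounded convergence yields $\bar{\mathbb{E}}\,h(\mathcal{G}^\epsilon(\epsilon N^{\epsilon^{-1}g^*}))\to h(\phi^*)$, and $\limsup_\epsilon[-\epsilon\log\mathbb{E}\,e^{-h(Z^\epsilon)/\epsilon}]\le L_T(g^*)+h(\phi^*)\le\inf\{h+I\}+2\delta$.

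The main technical obstacle I expect is the truncation step in the upper bound: justifying that the near-optimizers $\varphi^\epsilon$ may be replaced by controls lying in a fixed $\mathcal{U}^N$ without losing the inequality. This needs a careful decomposition of $\varphi^\epsilon$ on the event $\{L_T(\varphi^\epsilon)>N\}$ together with the fact that $h$ is bounded, so that the contribution on that event is controlled in terms of $N$ and can be made negligible by choosing $N$ large. The continuity and compactness input from Condition~\ref{Cond:LDP2}(1) and $S^N$ is otherwise the engine that makes the remaining arguments fall out essentially from weak convergence and Fatou's lemma.
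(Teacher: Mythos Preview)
Your approach is correct and is exactly the route taken in \cite[Theorem 4.2]{BDM09}, which is what the paper cites for this result rather than reproducing a proof; the sketch the paper does provide in the Appendix for the strengthened Theorem~\ref{Thm:LDP2} follows the same structure (variational representation, then one direction from Condition~\ref{Cond:LDP2}(2) with near-optimal controls localized to a fixed $\mathcal{U}^N$, the other from a deterministic near-optimizer $g^*$). Your identification of the localization/truncation of $\varphi^\epsilon$ to $\mathcal{U}^N$ as the only real technical point is accurate, and the decomposition you outline (set $\varphi^\epsilon$ to $1$ on $\{L_T(\varphi^\epsilon)>N\}$ and use boundedness of $h$ plus Markov's inequality on $\bar{\mathbb{E}}L_T(\varphi^\epsilon)\le 2\|h\|_\infty+1$) is the standard one.
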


For applications, the following strengthened form of Theorem \ref{Thm:LDP01}
is useful. The proof follows by straightforward modifications; for
completeness we include a sketch in the appendix.

Let $\left\{ K_{n}\subset \mathbb{X},n=1,2,\ldots \right\} $ be an
increasing sequence of compact sets such that $\cup _{n=1}^{\infty }K_{n}=%
\mathbb{X}$. For each $n$ let
\begin{eqnarray*}
\bar{\mathcal{A}}_{b,n}&\doteq& \left\{ \varphi \in \bar{\mathcal{A}}: \mbox{ for all }(t,\omega )\in [0,T]\times \mathbb{\bar{M}}\mbox{, }n\geq \varphi
(t,x,\omega )\geq 1/n\mbox{ if }x\in K_{n}\right. \\
&& \hspace{1in} \left. \mbox{ and }\varphi (t,x,\omega )=1\mbox{ if }x\in
K_{n}^{c}\right\} ,
\end{eqnarray*}%
and let $\bar{\mathcal{A}}_{b}=\cup _{n=1}^{\infty }\bar{\mathcal{A}}_{b,n}$%
. Define $\tilde{\mathcal{U}}^N=\mathcal{U}^N\cap \bar{\mathcal{A}}_{b}$.

\begin{theorem}
\label{thm:ldp} \label{Thm:LDP2} Suppose Condition \ref{Cond:LDP2} holds
with ${\mathcal{U}}^{N}$ replaced by $\tilde{\mathcal{U}}^{N}$. Then the
conclusions of Theorem \ref{Thm:LDP01} continue to hold.
\end{theorem}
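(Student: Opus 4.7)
The plan is to trace through the proof of Theorem~\ref{Thm:LDP01} and verify that every invocation of the full form of Condition~\ref{Cond:LDP2} can be replaced by its restricted form on $\tilde{\mathcal{U}}^N$ together with a truncation argument. Part (a) involves only $S^N$ and is unchanged, so only part (b) requires modification. The main tool is the pointwise truncation map $T_n : \bar{\mathcal{A}} \to \bar{\mathcal{A}}_{b,n}$ defined by
\[
(T_n\varphi)(s,x,\omega) = \begin{cases} \bigl(\varphi(s,x,\omega) \vee n^{-1}\bigr) \wedge n, & x \in K_n, \\ 1, & x \in K_n^c. \end{cases}
\]
Since $l(r) = r\log r - r + 1$ vanishes at $r=1$, is decreasing on $[0,1]$ and increasing on $[1,\infty)$, a case analysis gives the pointwise inequality $l(T_n\varphi) \le l(\varphi)$ together with the monotone pointwise convergence $l(T_n\varphi) \uparrow l(\varphi)$. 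Hence $T_n$ sends $\mathcal{U}^N$ into $\tilde{\mathcal{U}}^N$, $L_T(T_n\varphi) \uparrow L_T(\varphi)$ by monotone convergence, and for any $g \in S^N$ one has $T_n g \to g$ in the topology of $S^N$; condition (a) then supplies $\mathcal{G}^0(\nu_T^{T_n g}) \to \mathcal{G}^0(\nu_T^g)$.

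For the Laplace lower bound, I would fix $\delta > 0$, choose $\psi \in \mathbb{U}$ with $h(\psi) + I(\psi) \le \inf\{h+I\} + \delta$ and pick $g \in \mathbb{S}_\psi$ with $L_T(g) \le I(\psi) + \delta$, so that $g \in S^N$ for $N = \lceil I(\psi) + \delta\rceil$. I then use the deterministic control $T_n g \in \tilde{\mathcal{U}}^N$ in the variational representation of Theorem~\ref{VR: PRM}. The restricted condition applied with $\varphi_\epsilon \equiv \varphi \equiv T_n g$ gives
\[
\limsup_{\epsilon \to 0}\Bigl(-\epsilon \log \mathbb{E}\bigl[e^{-h(Z^\epsilon)/\epsilon}\bigr]\Bigr) \le L_T(T_n g) + h\bigl(\mathcal{G}^0(\nu_T^{T_n g})\bigr).
\]
Letting first $n \to \infty$, using the approximation properties of $T_n$ and continuity of $h$, and then $\delta \to 0$ yields the desired bound.

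For the Laplace upper bound, the original argument selects near-optimal $\varphi_\epsilon \in \bar{\mathcal{A}}$ in the variational representation and, using boundedness of $h$, reduces to $\varphi_\epsilon \in \mathcal{U}^N$ for a fixed $N$. Here the plan is to further apply $T_{n(\epsilon)}$ for a slowly growing sequence $n(\epsilon) \to \infty$, producing $\tilde{\varphi}_\epsilon \doteq T_{n(\epsilon)}\varphi_\epsilon \in \tilde{\mathcal{U}}^N$. Since $L_T(\tilde{\varphi}_\epsilon) \le L_T(\varphi_\epsilon)$, the cost term is not increased; compactness of $S^N$ then extracts subsequential distributional limits $\varphi \in \mathcal{U}^N$ of $\tilde{\varphi}_\epsilon$ to which the restricted condition applies, and the usual Fatou-plus-continuity step delivers $\liminf_{\epsilon \to 0}(-\epsilon\log\mathbb{E}[\cdots]) \ge \inf\{h + I\}$.

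The main obstacle will be verifying that the truncation step in the upper bound is harmless, i.e., that the expected change in $h \circ \mathcal{G}^\epsilon$ when replacing $\varphi_\epsilon$ by $\tilde{\varphi}_\epsilon$ vanishes in the limit. This amounts to a stability of $\mathcal{G}^\epsilon$ under perturbation of the intensity driving the controlled PRM, which in our target applications is available through the same tightness and stability estimates used to verify the restricted condition. With this in place, the remainder of the proof is a verbatim repetition of the argument in \cite{BDM09}.
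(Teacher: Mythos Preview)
Your treatment of the Laplace lower bound via the truncation $T_n g$ of a deterministic near-optimizer $g$ is exactly the paper's argument for that half.

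The gap is in your Laplace upper bound. You propose to take near-optimal random controls $\varphi_\epsilon \in \mathcal{U}^N$ and then truncate to $\tilde\varphi_\epsilon = T_{n(\epsilon)}\varphi_\epsilon \in \tilde{\mathcal{U}}^N$, but as you acknowledge, this requires controlling the difference between $h\circ\mathcal{G}^\epsilon(\epsilon N^{\epsilon^{-1}\varphi_\epsilon})$ and $h\circ\mathcal{G}^\epsilon(\epsilon N^{\epsilon^{-1}\tilde\varphi_\epsilon})$. Such stability of $\mathcal{G}^\epsilon$ under perturbation of the control intensity is \emph{not} among the hypotheses of the theorem, and your appeal to ``the same tightness and stability estimates used to verify the restricted condition'' is not admissible: Theorem~\ref{Thm:LDP2} is an abstract statement whose only hypothesis is the restricted form of Condition~\ref{Cond:LDP2}, so application-specific estimates cannot be imported into its proof.

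The paper avoids this difficulty entirely. The point you are missing is that the variational representation of Theorem~\ref{VR: PRM} already holds with the infimum restricted to $\tilde{\mathcal{U}} = \cup_{N\ge 1}\tilde{\mathcal{U}}^N$; this is a general fact about Poisson functionals established in \cite{BDM09} (see the proof of Theorem~2.8 there). Thus
\[
-\epsilon \log \bar{\mathbb{E}}\bigl(e^{-\epsilon^{-1}F(Z^\epsilon)}\bigr)
= \inf_{\varphi \in \tilde{\mathcal{U}}}\bar{\mathbb{E}}\bigl[L_T(\varphi) + F\circ\mathcal{G}^\epsilon(\epsilon N^{\epsilon^{-1}\varphi})\bigr],
\]
and near-optimal controls may be chosen directly in $\tilde{\mathcal{U}}$; the standard cost bound then forces them into $\tilde{\mathcal{U}}^N$ for $N$ depending only on $\|F\|_\infty$. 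With this in hand the Laplace upper bound proceeds exactly as in \cite{BDM09}, invoking the restricted part~(b) of Condition~\ref{Cond:LDP2} without any truncation of random controls.
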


\subsection{A family of SPDEs driven by Poisson Random Measures}

In this section we introduce the basic SPDE model that will be studied in
this work. We begin by giving a precise meaning to a solution for such a
SPDE and then recall a result from \cite{Kallianpur95} which gives
sufficient conditions on the coefficients ensuring the strong existence and
pathwise uniqueness of solutions. To introduce the solution space, we start
with some basic definitions (cf. \cite{Kallianpur95}).

\begin{definition}
Let $\mathcal{E}$ be a vector space. A family of norms $\{||\cdot
||_{p}:p\in \mathbb{N}_0 \}$ on $\mathcal{E}$ is called \textbf{compatible}
if for any $p,q\in \mathbb{N}_0$, whenever $\{x_{n}\}\subseteq \mathcal{E}$
is a Cauchy sequence with respect to both $||\cdot ||_{p}$ and $||\cdot
||_{q}$, and converges to 0 with respect to one norm, then it also converges
to 0 with respect to the other norm. The family is said to be \textbf{%
increasing} if for all $x \in \mathcal{E}$, $||x ||_{p}\le ||x ||_{q}$
whenever $p\le q$.
\end{definition}

\begin{definition}
\label{def: CHNS} A separable Fr\`echet space $\Phi$ is called a \textbf{%
countable Hilbertian space} if its topology is given by an increasing
sequence $|| \cdot ||_n$, $n \in \mathbb{N}_0$, of compatible Hilbertian
norms. A countable Hilbertian space $\Phi$ is called \textbf{nuclear} if for
each $n \in \mathbb{N}_0$ there exists $m>n$ such that the canonical
injection from $\Phi_m$ into $\Phi_n$ is Hilbert-Schmidt, where $\Phi_k$,
for each $k\in \mathbb{N}_0$, is the completion of $\Phi$ with respect to $%
|| \cdot ||_k$.
\end{definition}

If $\Phi $, $\{\Phi _{n}\}_{n\in \mathbb{N}_{0}}$ are as above, then $\{\Phi
_{n}\}_{n\in \mathbb{N}_{0}}$ is a sequence of decreasing Hilbert spaces and
$\Phi =\cap _{n=0}^{\infty }\Phi _{n}$. Identify $\Phi _{0}^{\prime }$ with $%
\Phi _{0}$ using Riesz's representation theorem, and denote the space of
bounded linear functionals on $\Phi _{n}$ by $\Phi _{-n}$. This space has a
natural inner product  [and norm] which we denote by $\langle \cdot ,\cdot
\rangle _{-n}$ [resp. $||\cdot ||_{-n}$], $n\in \mathbb{N}_{0}$ such that $%
\{\Phi _{-n}\}_{n\in \mathbb{N}_{0}}$ is a sequence of increasing Hilbert
spaces and the topological dual of $\Phi$, denoted as $\Phi^{\prime} $
equals $\cup _{n=0}^{\infty }\Phi _{-n}$ (see Theorem 1.3.1 of \cite%
{Kallianpur95}). Elements of $\Phi ^{\prime }$ need not have much
regularity. Solutions of the SPDE considered in this paper will have sample
paths in $\Phi ^{\prime }$. In fact under the conditions imposed here the
solutions will take values in $D([0,T]:\Phi _{-n})$ for some finite value of
$n$.

We will assume that there is a sequence $\{\phi _{j}\}\subset \Phi $ such
that $\{\phi _{j}\}$ is a complete orthonormal system (CONS) in $\Phi _{0}$
and is a complete orthogonal system (COS) in each $\Phi _{n},n\in \mathbb{Z}$%
. Then $\{\phi _{j}^{n}\}=\{\phi _{j}||\phi _{j}||_{n}^{-1}\}$ is a CONS in $%
\Phi _{n}$ for each $n\in \mathbb{Z}$. Define the map $\theta _{p}:\Phi
_{-p}\rightarrow \Phi _{p}$ by $\theta _{p}(\phi _{j}^{-p})=\phi _{j}^{p}$.
It is easy to check that for all $p\in \mathbb{N}$, $\theta _{p}(\Phi
)\subseteq \Phi $ (see Remark 6.1.1 of \cite{Kallianpur95}). Also, for each $%
r>0$, $\eta \in \Phi _{-r}$ and $\phi \in \Phi _{r}$, $\eta \lbrack \phi ]$
is defined by the formula
\begin{equation}
\eta \lbrack \phi ]=\sum_{j=1}^{\infty }\langle \eta ,\phi _{j}\rangle
_{-r}\langle \phi ,\phi _{j}\rangle _{r}.  \label{eqn:defsqbrkt}
\end{equation}%
We refer the reader to Example 1.3.2 of \cite{Kallianpur95} for a canonical
example of such a Countable Hilbertian Nuclear Space (CHNS) defined using a
closed densely defined self-adjoint operator on $\Phi _{0}$. A similar
example is considered in Section \ref{sec:eg} of this paper.

Following \cite{kallianpur1994}, we introduce the following conditions on
the coefficients $A$ and $G$ in equation \eqref{eq:sdeg}. Let $A:[0,T]\times
\Phi ^{\prime }\rightarrow \Phi ^{\prime }$, $G:[0,T]\times \Phi ^{\prime
}\times \mathbb{X}\rightarrow \Phi ^{\prime }$ be maps satisfying the
following condition.

\begin{condition}
\label{assump:sde} There exists $p_{0}\in \mathbb{N}$ such that, for every $%
p\geq p_{0}$, there exists $q\geq p$ and a constant $K=K(p,q)$ such that the
following hold.

\begin{enumerate}
\item (Continuity) For all $t\in \lbrack 0,T]$ and $u\in \Phi _{-p}$, $%
A(t,u)\in \Phi _{-q}$ and $G(t,u,\cdot )\in L^{2}(\mathbb{X},\nu ;\Phi
_{-p}) $. The maps $u\mapsto A(t,u)$ and $u\mapsto G(t,u,\cdot )$ are
continuous.
%Here $L^2(\mathbb{X},\nu; \Phi_{-p})$ is the space of measurable
%functions $f$ from $\mathbb{X}$ to $\Phi_{-p}$ such that
%$\int_\mathbb{X} ||f(v)||^2_{-p} \nu(dv) <\infty.$

\item (Coercivity) For all $t\in \lbrack 0,T]$, and $\phi \in \Phi $,
\begin{equation*}
2A(t,\phi )[\theta _{p}\phi ]\leq K(1+||\phi ||_{-p}^{2}).
\end{equation*}

\item (Growth) For all $t\in \lbrack 0,T]$, and $u\in \Phi _{-p}$,
\begin{equation*}
||A(t,u)||_{-q}^{2}\leq K(1+||u||_{-p}^{2})
\end{equation*}%
and
\begin{equation*}
\int_{\mathbb{X}}||G(t,u,v)||_{-p}^{2}\nu (dv)\leq K(1+||u||_{-p}^{2}).
\end{equation*}

\item (Monotonicity) For all $t\in \lbrack 0,T]$, and $u_{1},u_{2}\in \Phi
_{-p}$,
\begin{equation*}
\begin{split}
2& \langle A(t,u_{1})-A(t,u_{2}),u_{1}-u_{2}\rangle _{-q} \\
& +\int_{\mathbb{X}}||G(t,u_{1},v)-G(t,u_{2},v)||_{-q}^{2}\nu (dv)\leq
K||u_{1}-u_{2}||_{-q}^{2}.
\end{split}%
\end{equation*}
\end{enumerate}
\end{condition}

In Section \ref{sec:eg}, we will consider a model motivated by problems in
hydrology where all parts of Condition \ref{assump:sde} are satisfied.

We now give a precise definition of a solution to the SDE \eqref{eq:sdeg}.

\begin{definition}
\label{def:sde}Let $(\bar{\mathbb{M}},\mathcal{B}(\bar{\mathbb{M}}),\bar{%
\mathbb{P}},\{\bar{\mathcal{F}}_{t}\})$ be the filtered probability space
from Section \ref{Sec:PRM}. Fix $p\in \mathbb{N}_{0}$, suppose that $X_{0}$
is a $\bar{\mathcal{F}}_{0}$-measurable $\Phi _{-p}$-valued random variable
such that $\mathbb{E}||X_{0}||_{-p}^{2}<\infty $. A stochastic process $%
\{X_{t}^{\epsilon }\}_{t\in \lbrack 0,T]}$ defined on $\bar{\mathbb{M}}$ is
said to be a $\Phi _{-p}$-valued strong solution to the SDE \eqref{eq:sdeg}
with initial value $X_{0}$, if

(a) $X_{t}^{\epsilon }$ is a $\Phi _{-p}$-valued $\bar{\mathcal{F}}_{t}$%
-measurable random variable for all $t\in \lbrack 0,T]$;

(b) $X^\epsilon\in D([0,T]:\Phi_{-p} )$ a.s.;

(c) there is a $q\geq p$ such that for all $t\in \lbrack 0,T]$ and $u\in
\Phi _{-p}$, $A(t,u)\in \Phi _{-q}$ and $G(t,u,\cdot )\in L^{2}(\mathbb{X}%
,\nu ;\Phi _{-q})$, and there exists a sequence $\{\sigma _{n}\}_{n\geq 1}$
of $\{\bar{\mathcal{F}}_{t}\}$-stopping times increasing to infinity such
that for each $n\geq 1$,
\begin{equation*}
\bar{\mathbb{E}}\int_{0}^{T\wedge \sigma _{n}}\int_{\mathbb{X}%
}||G(s,X_{s}^{\epsilon },v)||_{-q}^{2}\nu (dv)ds<\infty
\end{equation*}%
and
\begin{equation*}
\bar{\mathbb{E}}\int_{0}^{T\wedge \sigma _{n}}||A(s,X_{s}^{\epsilon
})||_{-q}^{2}ds<\infty ;
\end{equation*}

(d) for all $t\in \lbrack 0,T]$, almost all $\omega \in \bar{\mathbb{M}}$,
and all $\phi \in \Phi $
\begin{equation}
X_{t}^{\epsilon }[\phi ]=X_{0}[\phi ]+\int_{0}^{t}A(s,X_{s}^{\epsilon
})[\phi ]ds+\epsilon \int_{0}^{t}\int_{\mathbb{X}}G(s,X_{s-}^{\epsilon
},v)[\phi ]\tilde{N}^{\epsilon ^{-1}}(dsdv).  \label{eq:sdeg2}
\end{equation}
\end{definition}

In Definition \ref{def:sde}, $\tilde{N}^{\epsilon ^{-1}}$ is the compensated
version of ${N}^{\epsilon ^{-1}}$ as defined below \eqref{eq:sdeg}, with ${N}%
^{\epsilon ^{-1}}$ having jump rates that are scaled by $1/\epsilon $ and is
constructed from $\bar{N}$, as below \eqref{Eqn: control}.

One can similarly define a $\Phi_{-p}$-valued strong solution on an
arbitrary filtered probability space supporting a suitable PRM.

\begin{definition}[pathwise uniqueness]
We say that the $\Phi _{-p}$-valued solution for the SDE (\ref{eq:sdeg}) has
the \textbf{pathwise uniqueness} property if the following is true. Suppose
that $X$ and $X^{\prime }$ are two $\Phi _{-p}$-valued solutions defined on
the same filtered probability space with respect to the same Poisson random
measure and starting from the same initial condition $X_{0}$. Then the paths
of $X$ and $X^{\prime }$ coincide for almost all $\omega $.
\end{definition}

The following theorem is taken from \cite{Kallianpur95} (see Theorem 6.2.2,
Lemma 6.3.1 and Theorem 6.3.1 therein).

\begin{theorem}
\label{Thm: strsol} Suppose that Condition \ref{assump:sde} holds. Let $%
X_{0} $ be a $\Phi _{-p}$-valued random variable satisfying $\mathbb{E}%
||X_{0}||_{-p}^{2}<\infty $. Then for sufficiently large $p_{1}\geq p$, the
canonical injection from $\Phi _{-p}$ to $\Phi _{-p_{1}}$ is
Hilbert-Schmidt, and for all such $p_{1}$ the SDE \eqref{eq:sdeg} with
initial value $X_{0}$ has a pathwise unique $\Phi _{-p_{1}}$-valued strong
solution.
\end{theorem}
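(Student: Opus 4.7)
The plan is to follow the monotone-operator / Galerkin approach developed in Kallianpur--Xiong, adapted to the CHNS-valued SPDE \eqref{eq:sdeg} driven by a compensated Poisson random measure. First, the choice of $p_{1}$ is forced by nuclearity: for each $p\ge p_{0}$ there exists $m>p$ for which the canonical injection $\Phi_{m}\hookrightarrow\Phi_{p}$ is Hilbert-Schmidt, and by duality the injection $\Phi_{-p}\hookrightarrow\Phi_{-m}$ is Hilbert-Schmidt as well; one takes $p_{1}\ge m$.

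Pathwise uniqueness should follow directly from the monotonicity hypothesis. If $X$ and $X'$ are two $\Phi_{-p_{1}}$-valued solutions driven by the same PRM and starting from the same initial condition, then along a common localizing sequence $\{\sigma_{n}\}$, It\^o's formula for $t\mapsto \|X_{t}-X'_{t}\|_{-q}^{2}$ (using the compensator of $N^{\epsilon^{-1}}$) produces a mean-zero local martingale plus the drift term $2\int_{0}^{t}\langle A(s,X_{s})-A(s,X'_{s}),X_{s}-X'_{s}\rangle_{-q}\,ds$ and the predictable quadratic variation contribution $\epsilon\int_{0}^{t}\!\int_{\mathbb{X}}\|G(s,X_{s},v)-G(s,X'_{s},v)\|_{-q}^{2}\,\nu(dv)\,ds$. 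Monotonicity (part 4 of Condition \ref{assump:sde}) bounds the sum by $K\int_{0}^{t}\|X_{s}-X'_{s}\|_{-q}^{2}\,ds$, and Gronwall together with Fatou then forces $X\equiv X'$.

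For existence I would construct Galerkin approximations $X^{(n)}$ by projecting the equation onto $\Phi^{(n)}=\mathrm{span}\{\phi_{1},\ldots,\phi_{n}\}$; each $X^{(n)}$ satisfies a finite-dimensional SDE with Poisson jumps whose strong well-posedness is standard (cf. Ikeda--Watanabe). The coercivity inequality $2A(s,\phi)[\theta_{p}\phi]\le K(1+\|\phi\|_{-p}^{2})$, combined with It\^o's formula applied to $\|X^{(n)}_{t}\|_{-p}^{2}$, the growth bound $\int_{\mathbb{X}}\|G(s,u,v)\|_{-p}^{2}\nu(dv)\le K(1+\|u\|_{-p}^{2})$, and Burkholder--Davis--Gundy on the compensated-Poisson martingale part, yield a uniform a priori estimate
\begin{equation*}
\sup_{n}\bar{\mathbb{E}}\sup_{t\le T}\|X^{(n)}_{t}\|_{-p}^{2}<\infty.
\end{equation*}
Because the injection $\Phi_{-p}\hookrightarrow\Phi_{-p_{1}}$ is Hilbert-Schmidt and therefore compact, this bound (in the stronger norm) together with an Aldous-type modulus estimate on the jump part promotes $\{X^{(n)}\}$ to a tight family in $D([0,T]:\Phi_{-p_{1}})$. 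A subsequential weak limit is then identified as a solution via a standard monotonicity argument in the spirit of Krylov--Rozovskii, using the continuity of $u\mapsto A(t,u)$ and $u\mapsto G(t,u,\cdot)$ from part 1 together with the monotonicity bound to pass to the limit in the weak formulation \eqref{eq:sdeg2} tested against each $\phi_{j}$.

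The main obstacle will be the tightness step in $D([0,T]:\Phi_{-p_{1}})$: the uniform bound is available only in the stronger norm $\|\cdot\|_{-p}$, so compactness of level sets in the weaker topology is indispensable, and it is precisely what the Hilbert-Schmidt embedding supplies. The jump structure, which distinguishes this setting from the Brownian case, must be controlled by a Skorohod-style modulus applied to the stochastic integral against $\tilde N^{\epsilon^{-1}}$, with the growth hypothesis on $G$ providing the requisite second-moment estimates uniformly in $n$.
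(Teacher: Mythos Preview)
The paper does not prove this theorem at all; it simply quotes the result from Kallianpur--Xiong \cite{Kallianpur95} (specifically Theorem 6.2.2, Lemma 6.3.1 and Theorem 6.3.1 there), so there is no ``paper's own proof'' to compare against. Your sketch is in fact a faithful outline of the argument in that reference: Galerkin projections onto $\mathrm{span}\{\phi_1,\ldots,\phi_n\}$, uniform $\|\cdot\|_{-p}$ bounds from coercivity and the growth condition on $G$ via It\^o's formula, tightness in $D([0,T]:\Phi_{-p_1})$ using the Hilbert--Schmidt (hence compact) embedding $\Phi_{-p}\hookrightarrow\Phi_{-p_1}$, identification of the limit by monotonicity, and pathwise uniqueness from part (d) of Condition \ref{assump:sde} with Gronwall. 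One small correction: in your uniqueness step the quadratic-variation term should carry the factor $\epsilon^{2}\cdot\epsilon^{-1}=\epsilon$, which you wrote, but note that monotonicity in Condition \ref{assump:sde}(d) has no $\epsilon$ in front of the $G$-term, so for $\epsilon\le 1$ the bound is immediate and for general $\epsilon$ one absorbs the constant into $K$; this is harmless but worth stating.
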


\section{Large Deviation Principle}

\label{sec:three} Throughout this section we will assume that Condition \ref%
{assump:sde} holds.

%%%%%%%%%%%%%%%%%%%%%%%%%%%%%%%%%%%%%%%%%%%%%%%%%%%%%%%%%%%%%%%%%%%%%%%%%%%%%%%%%%
Fix $p\geq p_{0}$ and $X_{0}\in \Phi _{-p}$. Let $X^{\epsilon }$ be the $%
\Phi _{-p_{1}}$-valued strong solution to the SDE \eqref{eq:sdeg} with
initial value $X_{0}$. In this section, we establish an LDP for $%
\{X^{\epsilon }\}$ under suitable assumptions, by verifying the sufficient
condition in Section \ref{Sec:thm}.

We begin by introducing the map $\mathcal{G}^{0}$ that will be used to
define the rate function and also used for verification of Condition \ref%
{Cond:LDP2}. Recall that $\mathbb{S}=\cup _{N\geq 1}S^{N}$, where $S^{N}$ is
defined in (\ref{eqn:SN}). As a first step we show that under Conditions \ref%
{Assump:expint2} and \ref{Assump:explip} below, for every $g\in \mathbb{S}$,
the integral equation
\begin{equation}
\tilde{X}_{t}^{g}=X_{0}+\int_{0}^{t}A(s,\tilde{X}_{s}^{g})ds+\int_{0}^{t}%
\int_{\mathbb{X}}G(s,\tilde{X}_{s}^{g},v)(g(s,v)-1)\nu (dv)ds  \label{eq:g0g}
\end{equation}%
has a unique continuous solution. Here $g$ plays the role of a control.
Keeping in mind that (\ref{eq:sdeg2}) is driven by the compensated measure
and that equations such as (\ref{eq:g0g}) will arise as law of large number
limits, $g$ corresponds to a shift in the scaled jump rate away from that of
the original model, which corresponds to $g=1$. Let
\begin{equation*}
||G(t,v)||_{0,-p}=\sup_{u\in \Phi _{-p}}\frac{||G(t,u,v)||_{-p}}{1+||u||_{-p}%
},\quad (t,v)\in \lbrack 0,T]\times \mathbb{X}.
\end{equation*}

\begin{condition}[Exponential Integrability]
\label{Assump:expint2} There exists $\delta _{1}\in(0,\infty) $ such that
%for all $u \in \Phi_{-p}$,
for all $E\in \mathcal{B}([0,T]\times \mathbb{X})$ satisfying $\nu
_{T}(E)<\infty $,
\begin{equation*}
\int_{E}e^{\delta _{1}||G(s,v)||_{0,-p}^{2}}\nu (dv)ds<\infty .
\end{equation*}
\end{condition}

\begin{remark}
\label{rmk:exp}Under Condition \ref{Assump:expint2}, for every $\delta
_{2}\in(0,\infty) $ and for all $E\in \mathcal{B}([0,T]\times \mathbb{X})$
satisfying $\nu _{T}(E)<\infty $
\begin{equation*}
\int_{E}e^{\delta _{2}||G(s,v)||_{0,-p}}\nu (dv)ds<\infty .
\end{equation*}
\end{remark}

The proof of Remark \ref{rmk:exp} is given in the appendix.

\begin{remark}
\label{rmk:l} The following inequalities will be used several times. Proofs
are omitted.

\begin{enumerate}
\item For $a,b,\sigma \in (0,\infty )$,
\begin{equation}
ab\leq e^{\sigma a}+\frac{1}{\sigma }(b\log b-b+1)=e^{\sigma a}+\frac{1}{%
\sigma }l(b).  \label{ineq}
\end{equation}

\item For each $\beta >0$ there exists $c_{1}(\beta )>0$, such that $%
c_{1}(\beta )\rightarrow 0$ as $\beta \rightarrow \infty $ and
\begin{equation*}
|x-1|\leq c_{1}(\beta )l(x)\mbox{ whenever }|x-1|\geq \beta .
\end{equation*}

\item For each $\beta >0$ there exists $c_{2}(\beta )<\infty $, such that
\begin{equation*}
|x-1|^{2}\leq c_{2}(\beta )l(x)\mbox{ whenever }|x-1|\leq \beta .
\end{equation*}
\end{enumerate}
\end{remark}

In particular, using the inequalities we have the following lemma.

\begin{lemma}
\label{lem:Gg}Under Conditions \ref{assump:sde} (c) and \ref{Assump:expint2}%
, for every $M\in \mathbb{N}$,
\begin{equation}
\sup_{g\in S^{M}}\int_{\mathbb{X}_{T}}||G(s,v)||_{0,-p}^{2}(g(s,v)+1)\nu
(dv)ds<\infty ,  \label{eq:G02g}
\end{equation}%
\begin{equation}
\sup_{g\in S^{M}}\int_{\mathbb{X}_{T}}||G(s,v)||_{0,-p}|g(s,v)-1|\nu
(dv)ds<\infty .  \label{eq:G01g}
\end{equation}
and
\begin{equation}  \label{eq:1481}
\begin{split}
\lim_{\delta \to 0}\sup_{g\in S^{M}}\sup_{|t-s|\leq \delta }\int_{[s ,t
]\times \mathbb{X}}||G(r,v)||_{0,-p}|g(r,v)-1|\nu (dv)dr = 0.
\end{split}%
\end{equation}
\end{lemma}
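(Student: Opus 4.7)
The plan is to prove all three claims using Young's inequality~\eqref{ineq} from Remark~\ref{rmk:l}, combined with the exponential integrability of $\|G(\cdot,\cdot)\|_{0,-p}$ from Condition~\ref{Assump:expint2} (and its consequence in Remark~\ref{rmk:exp}), together with the uniform bound $L_T(g)\le M$ enjoyed by every $g\in S^M$. All three parts come from the same basic recipe: decouple a product of $\|G\|$ (or $\|G\|^2$) with a function of $g$ into an exponential term in $\|G\|$ and an $l(g)$ term, integrate, and use that the first is finite by the exponential integrability assumption and the second is bounded by $M$ (up to a constant).

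For \eqref{eq:G02g}, I would apply~\eqref{ineq} pointwise with $a=\|G(s,v)\|_{0,-p}^{2}$, $b=g(s,v)$, and $\sigma=\delta_{1}$ to get $\|G\|_{0,-p}^{2}\,g\le e^{\delta_{1}\|G\|_{0,-p}^{2}}+\delta_{1}^{-1}l(g)$. Since $x\le e^{x}$ also yields $\|G\|_{0,-p}^{2}\le\delta_{1}^{-1}e^{\delta_{1}\|G\|_{0,-p}^{2}}$, adding these gives
\[
\|G\|_{0,-p}^{2}(g+1)\le(1+\delta_{1}^{-1})e^{\delta_{1}\|G\|_{0,-p}^{2}}+\delta_{1}^{-1}l(g).
\]
Integrating over $\mathbb{X}_{T}$, Condition~\ref{Assump:expint2} controls the exponential term and $L_T(g)\le M$ controls the rest. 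For \eqref{eq:G01g}, I would combine the elementary inequality $|g-1|\le g+1$ with~\eqref{ineq} at $a=\|G\|_{0,-p},b=g,\sigma=1$, giving $\|G\|_{0,-p}\,g\le e^{\|G\|_{0,-p}}+l(g)$, and also $\|G\|_{0,-p}\le e^{\|G\|_{0,-p}}$. Hence $\|G\|_{0,-p}|g-1|\le 2e^{\|G\|_{0,-p}}+l(g)$; integrating and invoking Remark~\ref{rmk:exp} together with $L_T(g)\le M$ yields the uniform bound.

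For \eqref{eq:1481}, which is the equicontinuity statement, the same decomposition is used but with a free parameter $\sigma>0$: for any $\sigma>0$,
\[
\int_{[s,t]\times\mathbb{X}}\|G\|_{0,-p}|g-1|\,\nu_{T}\le\int_{[s,t]\times\mathbb{X}}e^{\sigma\|G\|_{0,-p}}\,\nu_{T}+\frac{M}{\sigma}+\int_{[s,t]\times\mathbb{X}}\|G\|_{0,-p}\,\nu_{T}.
\]
Given $\epsilon>0$, first pick $\sigma=\sigma(\epsilon)$ so that $M/\sigma<\epsilon/3$. Having fixed this $\sigma$, the integrals $\int e^{\sigma\|G\|_{0,-p}}\,\nu_{T}$ and $\int\|G\|_{0,-p}\,\nu_{T}$ are finite by Remark~\ref{rmk:exp} (using $\|G\|_{0,-p}\le e^{\|G\|_{0,-p}}$), so they are absolutely continuous in the time variable; hence one can choose $\delta>0$ so that the restrictions of each to $[s,t]\times\mathbb{X}$ are at most $\epsilon/3$ whenever $|t-s|\le\delta$. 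This yields the claim.

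The main obstacle lies in~\eqref{eq:1481}, since the naive decomposition of $\|G\|_{0,-p}|g-1|$ produces an $l(g)/\sigma$ piece whose integral is uniformly bounded in $g$ by $M/\sigma$ but has no room to shrink with $|t-s|$. The trick is to exploit the fact that the $L_T$-bound on $g$ is uniform, so $\sigma$ can be calibrated as a function of $\epsilon$ alone, independently of both $g$ and of the interval $[s,t]$; once $\sigma$ is chosen, the exponential piece is globally integrable and hence vanishes uniformly as $|t-s|\to 0$ by absolute continuity of the integral.
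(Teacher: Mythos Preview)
There is a genuine gap that runs through all three parts. Condition~\ref{Assump:expint2} and Remark~\ref{rmk:exp} only give
\[
\int_{E} e^{\delta_1 \|G(s,v)\|_{0,-p}^2}\,\nu_T(ds\,dv) < \infty
\quad\text{and}\quad
\int_{E} e^{\delta_2 \|G(s,v)\|_{0,-p}}\,\nu_T(ds\,dv) < \infty
\]
for sets $E$ with $\nu_T(E)<\infty$. They do \emph{not} give integrability over all of $\mathbb{X}_T$: the measure $\nu$ is only assumed to lie in $\mathcal{M}_{FC}(\mathbb{X})$, so $\nu_T(\mathbb{X}_T)=T\nu(\mathbb{X})$ may be infinite, and since $e^{\delta\|G\|^2}\ge 1$ and $e^{\sigma\|G\|}\ge 1$ pointwise, the integrals $\int_{\mathbb{X}_T} e^{\delta_1\|G\|_{0,-p}^2}\,\nu_T$ and $\int_{[s,t]\times\mathbb{X}} e^{\sigma\|G\|_{0,-p}}\,\nu_T$ are then infinite. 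For the same reason $\int_{\mathbb{X}_T}\|G\|_{0,-p}\,\nu_T$ need not be finite either (square integrability does not imply $L^1$ when the measure is infinite). So your bounds in all three displays do not integrate, and the absolute-continuity argument for \eqref{eq:1481} never gets off the ground.

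The paper fixes exactly this issue by first isolating the set $E=\{(s,v):\|G(s,v)\|_{0,-p}\ge 1\}$, which has finite $\nu_T$-measure by Condition~\ref{assump:sde}(c), and applying your Young-inequality idea only on $E$. On $E^c$ one has $\|G\|_{0,-p}\le 1$, and the paper uses different estimates: for \eqref{eq:G02g} a further split according to whether $g$ is large (use $g\le\kappa_2 l(g)$) or bounded (use $\int\|G\|_{0,-p}^2\nu_T<\infty$); for \eqref{eq:G01g} and \eqref{eq:1481} a split according to $|g-1|\le\theta$ versus $|g-1|>\theta$, handling the first piece by Cauchy--Schwarz combined with $|g-1|^2\le c_2(\theta)l(g)$ and the second by $|g-1|\le c_1(\theta)l(g)$. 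The free parameters $M_0$ and $\theta$ are then tuned, much as you tuned $\sigma$, to obtain the uniform smallness in \eqref{eq:1481}. Your overall strategy is right in spirit; what is missing is this localisation to a set of finite measure before invoking the exponential integrability.
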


\begin{proof}
First notice that under Condition \ref{assump:sde} (c), we have
\begin{equation}
\int_{\mathbb{X}_{T}}||G(s,v)||_{0,-p}^{2}\nu (dv)ds\leq KT<\infty .
\label{eq:12}
\end{equation}%
Thus we only need to prove that
\begin{equation*}
\sup_{g\in S^{M}}\int_{\mathbb{X}_{T}}||G(s,v)||_{0,-p}^{2}g(s,v)\nu
(dv)ds<\infty .
\end{equation*}%
If $E=\{(s,v):||G(s,v)||_{0,-p}\geq 1\}$, then by \eqref{eq:12} we have $\nu
_{T}(E)<\infty $. Also, from the super linear growth of the function $l$, we
can find $\kappa _{1}$, $\kappa _{2}\in (0,\infty )$ such that for all $%
x\geq \kappa _{1}$, $x\leq \kappa _{2}l(x)$. Define $F=\{(s,v):g(s,v)\geq
\kappa _{1}\}$. Then, from \eqref{ineq}
\begin{equation*}
\begin{split}
\int_{\mathbb{X}_{T}}||G(s,v)||_{0,-p}^{2}g(s,v)\nu (dv)ds&
=\int_{E}||G(s,v)||_{0,-p}^{2}g(s,v)\nu
(dv)ds+\int_{E^{c}}||G(s,v)||_{0,-p}^{2}g(s,v)\nu (dv)ds \\
& \leq \int_{E}e^{\delta _{1}||G(s,v)||_{0,-p}^{2}}\nu (dv)ds+\frac{1}{%
\delta _{1}}\int_{E}l(g(s,v))\nu (dv)ds \\
& \quad +\int_{E^{c}\cap F}\kappa _{2}l(g(s,v))\nu (dv)ds+\kappa
_{1}\int_{E^{c}\cap F^{c}}||G(s,v)||_{0,-p}^{2}\nu (dv)ds.
\end{split}%
\end{equation*}%
Combining this estimate with Condition \ref{Assump:expint2} and the
definition of $S^{M}$, we have \eqref{eq:G02g}.

We now prove \eqref{eq:G01g} and \eqref{eq:1481}. Note that
\begin{equation*}
\begin{split}
\int_{[s ,t ]\times \mathbb{X}}& ||G(r,v)||_{0,-p}|g(r,v)-1|\nu (dv)dr \\
=& \int_{([s ,t ]\times \mathbb{X})\cap E}||G(r,v)||_{0,-p}|g(r,v)-1|\nu
(dv)dr+\int_{([s ,t ]\times \mathbb{X})\cap
E^{c}}||G(r,v)||_{0,-p}|g(r,v)-1|\nu (dv)dr.
\end{split}%
\end{equation*}%
Using \eqref{ineq} twice (once with $b=$ $g$ and once with $b=1$), for any $%
M_{0}\in (0,\infty )$
\begin{equation}  \label{eq:eq2054}
\int_{([s ,t ]\times \mathbb{X})\cap E}||G(r,v)||_{0,-p}|g(r,v)-1|\nu
(dv)dr\leq 2\int_{([s ,t]\times \mathbb{X})\cap
E}e^{M_{0}||G(r,v)||_{0,-p}}\nu (dv)dr+\frac{M}{M_{0}}.
\end{equation}%
Recalling Remark \ref{rmk:l}, for any $\theta >0$ and $g \in S^M$
\begin{align}
\int_{([s ,t ]\times \mathbb{X})\cap E^{c}}&||G(r,v)||_{0,-p}|g(r,v)-1|\nu
(dv)dr\notag  \\
=& \int_{([s ,t ]\times \mathbb{X})\cap E^{c}\cap \{|g-1|\leq \theta
\}}||G(r,v)||_{0,-p}|g-1|\nu (dv)dr\notag \\
&+ \int_{([s ,t ]\times \mathbb{X})\cap E^{c}\cap \{|g-1|>\theta
\}}||G(r,v)||_{0,-p}|g-1|\nu (dv)dr\notag \\
\leq & \left( \int_{[s ,t ]\times \mathbb{X}}||G(r,v)||_{0,-p}^{2}\nu
(dv)dr\right) ^{1/2}\sqrt{c_{2}(\theta )M}+c_{1}(\theta )M.\label{eq:eq2056}
\end{align}
The inequality in \eqref{eq:G01g} now follows on setting $s=0$, $t=T$ in %
\eqref{eq:eq2054} and \eqref{eq:eq2056} and using Condition \ref{assump:sde}
(c) and Remark \ref{rmk:exp}.

Next consider \eqref{eq:1481}. Fix $\epsilon \in (0,\infty)$. Choose $M_{0}$
such that $\frac{M}{M_{0}}\leq \frac{\epsilon }{4}$. Let $\delta _{1} \in
(0, \infty)$ be such that
\begin{equation*}
2\sup_{|t -s |\leq \delta _{1}}\int_{([s ,t]\times \mathbb{X})\cap
E}e^{M_{0}||G(r,v)||_{0,-p}}\nu (dv)dr\leq \frac{\epsilon}{4}.
\end{equation*}%
Now choose $\theta \in (0,\infty)$ such that $c_{1}(\theta )M\leq \frac{%
\epsilon}{4}$. Finally, choose $\delta _{2} \in (0,\infty)$ such that
\begin{equation*}
\sup_{|t -s |\leq \delta _{2}}\left( \int_{[s ,t ]\times \mathbb{X}%
}||G(r,v)||_{0,-p}^{2}\nu (dv)dr\right) ^{1/2}\sqrt{c_{2}(\theta )N}\leq
\frac{\epsilon}{4}.
\end{equation*}%
Using the above inequalities in \eqref{eq:eq2054} and \eqref{eq:eq2056}, we
have for all  $\delta \leq \min \{\delta _{1},\delta _{2}\}$,
\begin{equation*}
\sup_{g\in S^{M}}\sup_{|t-s|\leq \delta }\int_{[s ,t ]\times \mathbb{X}%
}||G(r,v)||_{0,-p}|g(r,v)-1|\nu (dv)dr \le \epsilon
\end{equation*}
The result follows.
\end{proof}

We will need the following stronger condition on fluctuations of $G$ than
(d) of Condition \ref{assump:sde}. Let
\begin{equation*}
||G(t,v)||_{1,-q}=\sup_{{\scriptstyle u_{1},u_{2}\in\Phi_{-q},\hfill }{%
\scriptstyle u_{1}\neq u_{2}\hfill }}\frac{||G(t,u_{1},v)-G(t,u_{2},v)||_{-q}%
}{||u_{1}-u_{2}||_{-q}}.
\end{equation*}

\begin{condition}
\label{Assump:explip} For $q$ as in Condition \ref{assump:sde}, there exists
$\delta >0$ such that for all $E\in \mathcal{B}([0,T]\times \mathbb{X})$
satisfying $\nu _{T}(E)<\infty $,
\begin{equation*}
\int_{E}e^{\delta ||G(s,v)||_{1,-q}^{2}}\nu (dv)ds<\infty .
\end{equation*}
\end{condition}

\begin{remark}
\label{rmk:Gg2}Under Conditions \ref{assump:sde} (d) and \ref{Assump:explip}%
, for every $M\in \mathbb{N}$,
\begin{equation*}
\sup_{g\in S^{M}}\int_{\mathbb{X}_{T}}||G(s,v)||_{1,-q}^{2}(g(s,v)+1)\nu
(dv)ds<\infty ,
\end{equation*}%
and
\begin{equation}  \label{eq:g11g}
\sup_{g\in S^{M}}\int_{\mathbb{X}_{T}}||G(s,v)||_{1,-q}|g(s,v)-1|\nu
(dv)ds<\infty .
\end{equation}
\end{remark}

The proof of this remark is similar to that of Lemma \ref{lem:Gg}, and thus
omitted. Note that Conditions \ref{Assump:expint2} and \ref{Assump:explip}
hold trivially if $||G(s,v)||_{0,-p}$ and $||G(s,v)||_{1,-q}$ are bounded in
$(s,v)$.

Recall that $p_{1}\geq p $ is chosen such that the canonical injection from $%
\Phi _{-p}$ to $\Phi _{-p_{1}}$ is Hilbert-Schmidt.

\begin{theorem}
\label{thm:existuniq} Fix $g\in \mathbb{S}$. Suppose Conditions \ref%
{assump:sde}, \ref{Assump:expint2} and \ref{Assump:explip} hold, and that $%
X_{0}\in \Phi _{-p}$. Then there exists a unique $\tilde{X}^{g}\in
C([0,T]:\Phi _{-p_{1}})$ such that for every $\phi \in \Phi $,
\begin{equation}
\tilde{X}_{t}^{g}[\phi ]=X_{0}[\phi ]+\int_{0}^{t}A(s,\tilde{X}%
_{s}^{g})[\phi ]ds+\int_{0}^{t}\int_{\mathbb{X}}G(s,\tilde{X}%
_{s}^{g},v)[\phi ](g(s,v)-1)\nu (dv)ds.  \label{eq:g01}
\end{equation}%
Furthermore, for $N\in \mathbb{N}$, $\sup_{t\in \lbrack 0,T]}\sup_{g\in
S^{N}}||\tilde{X}_{t}^{g}||_{-p}<\infty $.
\end{theorem}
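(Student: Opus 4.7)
The plan is to combine (a) an a priori $\Phi_{-p}$-bound coming from coercivity, (b) uniqueness in $C([0,T]:\Phi_{-q})$ from monotonicity, and (c) existence via Picard iteration, together with a regularity upgrade to continuity in $\Phi_{-p_{1}}$ by choosing $p_{1}\geq q$ so that $\Phi_{-p}\hookrightarrow\Phi_{-p_{1}}$ is Hilbert-Schmidt and $\Phi_{-q}\hookrightarrow\Phi_{-p_{1}}$ is continuous. Throughout, fix $N\in\mathbb{N}$ with $g\in S^{N}$.

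For the a priori bound, for any candidate solution of (\ref{eq:g01}) the pairing identity $\|\psi\|_{-p}^{2}=\psi[\theta_{p}\psi]$ combined with Condition \ref{assump:sde}(b) and the elementary estimate $|G(s,u,v)[\theta_{p}u]|\leq\|G(s,u,v)\|_{-p}\|u\|_{-p}\leq\|G(s,v)\|_{0,-p}(1+\|u\|_{-p})\|u\|_{-p}$ yields a scalar inequality
\begin{equation*}
\|\tilde{X}^{g}_{t}\|_{-p}^{2}\leq\|X_{0}\|_{-p}^{2}+\int_{0}^{t}\bigl(K+C\,\rho(s)\bigr)\bigl(1+\|\tilde{X}^{g}_{s}\|_{-p}^{2}\bigr)\,ds,
\end{equation*}
where $\rho(s)=\int_{\mathbb{X}}\|G(s,v)\|_{0,-p}|g(s,v)-1|\nu(dv)$ is in $L^{1}[0,T]$ with $\|\rho\|_{L^{1}}$ bounded uniformly in $g\in S^{N}$ by Lemma \ref{lem:Gg}. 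Gronwall's inequality gives $\sup_{g\in S^{N}}\sup_{t\in[0,T]}\|\tilde{X}^{g}_{t}\|_{-p}^{2}\leq C(N,\|X_{0}\|_{-p},T)$.

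For uniqueness, if $\tilde{X}^{1},\tilde{X}^{2}$ are two solutions, set $D_{t}=\|\tilde{X}^{1}_{t}-\tilde{X}^{2}_{t}\|_{-q}^{2}$. Pairing the difference of the integral equations with $\theta_{q}(\tilde{X}^{1}_{t}-\tilde{X}^{2}_{t})$, using Condition \ref{assump:sde}(d) to control the $A$-contribution (and discarding the nonnegative $\int\|G_{1}-G_{2}\|_{-q}^{2}\nu(dv)$ term), and applying Cauchy-Schwarz with the Lipschitz bound $\|G(s,u_{1},v)-G(s,u_{2},v)\|_{-q}\leq\|G(s,v)\|_{1,-q}\|u_{1}-u_{2}\|_{-q}$ to the $G$-term integrated against $(g-1)\nu$, one obtains
\begin{equation*}
D_{t}\leq\int_{0}^{t}\Bigl(K+2\int_{\mathbb{X}}\|G(s,v)\|_{1,-q}|g(s,v)-1|\nu(dv)\Bigr)D_{s}\,ds.
\end{equation*}
The weight is in $L^{1}[0,T]$ by Remark \ref{rmk:Gg2} (see (\ref{eq:g11g})), so Gronwall yields $D\equiv 0$. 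Existence is then obtained by Picard iteration: set $\tilde{X}^{g,0}\equiv X_{0}$ and define
\begin{equation*}
\tilde{X}^{g,n+1}_{t}[\phi]=X_{0}[\phi]+\int_{0}^{t}A(s,\tilde{X}^{g,n}_{s})[\phi]\,ds+\int_{0}^{t}\!\!\int_{\mathbb{X}}G(s,\tilde{X}^{g,n}_{s},v)[\phi](g(s,v)-1)\nu(dv)ds.
\end{equation*}
The a priori argument applies uniformly in $n$, giving $\sup_{n,t}\|\tilde{X}^{g,n}_{t}\|_{-p}\leq C$, and the uniqueness-style monotonicity bound applied to successive differences $\tilde{X}^{g,n+1}-\tilde{X}^{g,n}$ gives $\|\tilde{X}^{g,n+1}_{t}-\tilde{X}^{g,n}_{t}\|_{-q}^{2}\leq\int_{0}^{t}\psi(s)\|\tilde{X}^{g,n}_{s}-\tilde{X}^{g,n-1}_{s}\|_{-q}^{2}ds$ with the same $L^{1}$-weight $\psi$, whence iteration yields geometric bounds and $\{\tilde{X}^{g,n}\}$ is Cauchy in $C([0,T]:\Phi_{-q})$. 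Passage to the limit in the integral equation using continuity of $A(s,\cdot)$ and $G(s,\cdot,v)$ (Condition \ref{assump:sde}(a)) and dominated convergence (supported by the a priori bound and Lemma \ref{lem:Gg}) produces $\tilde{X}^{g}\in C([0,T]:\Phi_{-q})\subset C([0,T]:\Phi_{-p_{1}})$; the a priori bound in $\Phi_{-p}$ transfers to the limit, delivering the final uniform assertion.

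The main obstacle is that the multiplier $g-1$ is not uniformly bounded, so the $G$-contributions in both the a priori and contraction estimates must be controlled through the weighted $L^{1}$-quantities $\int\|G\|_{0,-p}|g-1|\nu ds$ and $\int\|G\|_{1,-q}|g-1|\nu ds$. Their finiteness, and more importantly their uniformity over $g\in S^{N}$, rests entirely on the exponential integrability Conditions \ref{Assump:expint2} and \ref{Assump:explip} combined with the entropy constraint $L_{T}(g)\leq N$, as packaged by Lemma \ref{lem:Gg} and Remark \ref{rmk:Gg2}; juggling the three scales $-p$, $-q$, $-p_{1}$ so that each estimate uses the right norm is the other source of care.
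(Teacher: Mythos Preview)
Your uniqueness argument matches the paper's and is fine. The existence argument, however, has a genuine gap: Picard iteration does not work under the hypotheses of Condition~\ref{assump:sde}, because that condition gives \emph{monotonicity} of $A$, not Lipschitz continuity. Concretely, differentiating $\|\tilde{X}^{g,n+1}_{t}-\tilde{X}^{g,n}_{t}\|_{-q}^{2}$ produces the term
\[
2\bigl\langle A(t,\tilde{X}^{g,n}_{t})-A(t,\tilde{X}^{g,n-1}_{t}),\,\tilde{X}^{g,n+1}_{t}-\tilde{X}^{g,n}_{t}\bigr\rangle_{-q},
\]
where the $A$-increment is taken at the pair $(\tilde{X}^{g,n},\tilde{X}^{g,n-1})$ but the pairing is with $\tilde{X}^{g,n+1}-\tilde{X}^{g,n}$. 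Condition~\ref{assump:sde}(d) only controls $\langle A(t,u_{1})-A(t,u_{2}),u_{1}-u_{2}\rangle_{-q}$ with the \emph{same} $u_{1},u_{2}$ in both slots, and there is no assumed bound of the form $\|A(t,u_{1})-A(t,u_{2})\|_{-q}\leq C\|u_{1}-u_{2}\|_{-q}$ to fall back on. So the claimed contraction estimate cannot be derived. The same mismatch afflicts your ``a priori bound for iterates'': coercivity (Condition~\ref{assump:sde}(b)) bounds $A(t,\phi)[\theta_{p}\phi]$ with the same $\phi$ in both places (and only for $\phi\in\Phi$), whereas the iterate equation pairs $A(t,\tilde{X}^{g,n})$ against $\theta_{p}\tilde{X}^{g,n+1}$.

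This is exactly why the paper (following the standard route for monotone operator equations, as in Kallianpur--Xiong) uses a Galerkin scheme rather than Picard iteration. One projects onto $\mathrm{span}\{\phi_{1}^{-p},\ldots,\phi_{d}^{-p}\}$, obtaining a finite-dimensional ODE (Lemmas~\ref{lem:exist} and~\ref{lem:ddim}) whose solution $X^{d}$ lies in $\Phi$; coercivity then applies directly and yields the uniform $\Phi_{-p}$-bound over $d$ and over $g\in S^{N}$. Compactness of $\{X^{d}\}$ in $C([0,T]:\Phi_{-p_{1}})$ follows from this bound together with equicontinuity of $X^{d}[\phi]$ (the Hilbert--Schmidt embedding $\Phi_{-p}\hookrightarrow\Phi_{-p_{1}}$ is what makes the bounded set in $\Phi_{-p}$ relatively compact in $\Phi_{-p_{1}}$). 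Any limit point solves~(\ref{eq:g01}), and your uniqueness argument then identifies it. The key lemmas you cite for the $G$-terms (Lemma~\ref{lem:Gg}, Remark~\ref{rmk:Gg2}) are used in the same way.
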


We note that in the above theorem $\tilde{X}^{g}$ is a non-random element of
$C([0,T]:\Phi _{-p_{1}})$. We can now present the main large deviations
result. Recall that for $g\in \mathbb{S}$, $\nu _{T}^{g}(dsdv)=g(s,v)\nu
(dv)ds$. Define
\begin{equation}
\mathcal{G}^{0}(\nu _{T}^{g})=\tilde{X}^{g}\mbox{ for }g\in \mathbb{S}\mbox{%
, with }\tilde{X}^{g}\mbox{ given by (\ref{eq:g01}).}  \label{eq:g0}
\end{equation}%
Let $I:D([0,T]:\Phi _{-p_{1}})\rightarrow \lbrack 0,\infty ]$ be defined as
in \eqref{Eqn: I2}.

\begin{theorem}
\label{Thm: LDPg} Suppose that Conditions \ref{assump:sde}, \ref%
{Assump:expint2} and \ref{Assump:explip} hold. Then $I$ is a rate function
on $\Phi _{-p_{1}}$, and the family $\{{X}^{\epsilon }\}_{\epsilon >0}$
satisfies a large deviation principle on $D([0,T]:\Phi _{-p_{1}})$ with rate
function $I$.
\end{theorem}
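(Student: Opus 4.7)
The plan is to invoke Theorem \ref{Thm:LDP2}, so I must verify both parts of Condition \ref{Cond:LDP2} (with $\mathcal{U}^N$ replaced by $\tilde{\mathcal{U}}^N$) for the map $\mathcal{G}^0$ defined by (\ref{eq:g0}); its well-definedness on all of $\mathbb{S}$ is supplied by Theorem \ref{thm:existuniq}. Once both parts are verified, Theorem \ref{Thm:LDP2} automatically yields that $I$ is a rate function and $\{X^\epsilon\}$ satisfies the LDP on $D([0,T]:\Phi_{-p_1})$.

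For part (a), fix $N$ and a sequence $g_n \to g$ in $S^N$. I would show $\tilde{X}^{g_n} \to \tilde{X}^g$ in $C([0,T]:\Phi_{-p_1})$ by a compactness-plus-uniqueness argument. The uniform bound $\sup_n \sup_t \|\tilde{X}^{g_n}_t\|_{-p} < \infty$ from Theorem \ref{thm:existuniq}, combined with the Hilbert--Schmidt injection $\Phi_{-p} \hookrightarrow \Phi_{-p_1}$, gives pointwise precompactness in $\Phi_{-p_1}$. Equicontinuity in $\Phi_{-p_1}$ follows from estimating the two integrals in (\ref{eq:g01}): the drift via Condition \ref{assump:sde}(c), and the controlled term via (\ref{eq:1481}) of Lemma \ref{lem:Gg} together with the pointwise bound $\|G(s,u,v)\|_{-p} \le \|G(s,v)\|_{0,-p}(1 + \|u\|_{-p})$. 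Arzel\`a--Ascoli yields a subsequential limit in $C([0,T]:\Phi_{-p_1})$; Condition \ref{assump:sde}(a) together with a standard splitting argument exploiting the uniform integrability in (\ref{eq:G01g}) and the weak convergence $\nu_T^{g_n} \to \nu_T^g$ passes (\ref{eq:g01}) to the limit, and uniqueness identifies the limit as $\tilde{X}^g$.

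For part (b), fix $\varphi_\epsilon, \varphi \in \tilde{\mathcal{U}}^N$ with $\varphi_\epsilon \Rightarrow \varphi$ and let $Y^\epsilon \doteq \mathcal{G}^\epsilon(\epsilon N^{\epsilon^{-1}\varphi_\epsilon})$. A standard splitting of the compensated integral gives
\begin{equation*}
Y^\epsilon_t = X_0 + \int_0^t A(s, Y^\epsilon_s)\,ds + \int_0^t\!\!\int_{\mathbb{X}} G(s, Y^\epsilon_{s}, v)(\varphi_\epsilon(s,v) - 1)\,\nu(dv)\,ds + \epsilon M^\epsilon_t,
\end{equation*}
where $M^\epsilon$ is a $\Phi_{-p}$-valued local martingale with predictable quadratic variation dominated by $\epsilon^{-1}\int_0^t\!\int \|G(s,Y^\epsilon_s,v)\|_{-p}^2 \varphi_\epsilon(s,v)\nu(dv)ds$. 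An energy estimate for $\|Y^\epsilon\|_{-p}^2$ using Condition \ref{assump:sde}(b,d) together with Remark \ref{rmk:Gg2} and Lemma \ref{lem:Gg} yields the a priori bound $\bar{\mathbb{E}}\sup_{t\le T}\|Y^\epsilon_t\|_{-p}^2 \le C(N)$, uniformly in $\epsilon$ and in $\varphi_\epsilon \in \tilde{\mathcal{U}}^N$. Doob's inequality then gives $\epsilon \sup_t \|M^\epsilon_t\|_{-p} \to 0$ in $L^2$. The Hilbert--Schmidt injection converts these $\Phi_{-p}$-bounds into tightness in $D([0,T]:\Phi_{-p_1})$, with an Aldous-type estimate based on (\ref{eq:1481}) for the drift and the vanishing martingale for the jump part. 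Combined with the automatic tightness of $\{\varphi_\epsilon\}$ in the compact $S^N$, Skorohod representation and the same identification argument as in part (a) show that each subsequential limit satisfies (\ref{eq:g01}) with control $\varphi$; uniqueness forces the limit to equal $\tilde{X}^\varphi$.

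The principal obstacle lies in passing to the limit in the drift $\int_0^t\!\int G(s,Y^\epsilon_s,v)(\varphi_\epsilon(s,v)-1)\nu(dv)ds$ of part (b): both the integrand $G(s,Y^\epsilon_s,\cdot)$ and the perturbation $(\varphi_\epsilon - 1)$ vary simultaneously, $(\varphi_\epsilon - 1)\nu$ need not be bounded in total variation, and $\varphi_\epsilon$ converges only in distribution. It is precisely here that the restriction $\varphi_\epsilon \in \tilde{\mathcal{U}}^N$ (controls bounded away from $0$ and $\infty$ on compacts in $\mathbb{X}$ and identically $1$ outside) together with the uniform estimates (\ref{eq:G01g}), (\ref{eq:1481}), and (\ref{eq:g11g}) become indispensable, supplying the uniform integrability needed to close the argument via the strengthened Theorem \ref{Thm:LDP2}.
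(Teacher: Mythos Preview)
Your proposal is correct and follows the paper's route exactly: the paper packages your parts (a) and (b) as Propositions \ref{prop:g1} and \ref{prop:g2} and then invokes Theorem \ref{thm:ldp}. The one step you underplay is that identifying $Y^\epsilon = \mathcal{G}^\epsilon(\epsilon N^{\epsilon^{-1}\varphi_\epsilon})$ with the solution of your displayed controlled SDE is not a mere ``splitting of the compensated integral'' but a Girsanov-type argument (the exponential density under which $N^{\epsilon^{-1}\varphi_\epsilon}$ acquires the law of $N^{\epsilon^{-1}}$, so that the measurable solution map $\mathcal{G}^\epsilon$ can be applied), and it is \emph{here}---to make that exponential a true martingale---rather than only in the limit identification, that the boundedness built into $\tilde{\mathcal{U}}^N$ is first essential.
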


We now proceed with the proofs. In Section \ref{sec:prexun} we prove Theorem %
\ref{thm:existuniq} and in Section \ref{sec:prldp}, we present the proof of
Theorem \ref{Thm: LDPg}.

\subsection{Proof of Theorem \protect\ref{thm:existuniq}}

\label{sec:prexun} The proof of the theorem is based on the following two
lemmas. The first lemma is standard and so its proof is relegated to the
appendix. The norm $||\cdot ||$ in the lemma is the Euclidean norm in $%
\mathbb{R}^{d}$.

\begin{lemma}
\label{lem:exist} Let $a,u:[0,T]\times \mathbb{R}^{d}\rightarrow \mathbb{R}%
^{d}$ and $b:[0,T]\times \mathbb{R}^{d}\rightarrow \mathbb{R}$ be measurable
functions such that, for a.e. $s\in \lbrack 0,T]$, the maps $y\mapsto a(s,y)$%
, $y\mapsto b(s,y)$ and $y\mapsto u(s,y)$ are continuous. Further suppose
that for some $\kappa \in (0,\infty )$,
\begin{align*}
||a(s,y)||+|b(s,y)|& \leq \kappa (1+||y||),\quad \mbox{ for all $s\in
\lbrack 0,T]$, $y\in \mathbb{R}^{d}$} \\
\int_{0}^{T}\sup_{y\in \mathbb{R}^{d}}||u(s,y)||ds& \leq M<\infty .
\end{align*}%
Fix $x_{0}\in \mathbb{R}^{d}$. Then there exists $x\in C([0,T]:\mathbb{R}%
^{d})$ such that $x$ satisfies the integral equation
\begin{equation}
x(t)=x_{0}+\int_{0}^{t}a(s,x(s))ds+\int_{0}^{t}b(s,x(s))u(s,x(s))ds,
\label{eq:int}
\end{equation}%
and
\begin{equation*}
\sup_{t\in \lbrack 0,T]}||x(t)||\leq (||x_{0}||+\kappa (M+T))e^{\kappa
(M+T)}.
\end{equation*}
\end{lemma}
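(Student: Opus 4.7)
The plan is to prove Lemma \ref{lem:exist} as a Carath\'eodory-type existence result for the ODE
\begin{equation*}
\dot{x}(s) = F(s, x(s)), \quad F(s,y) := a(s,y) + b(s,y)u(s,y),
\end{equation*}
whose driver is continuous in $y$ for a.e.\ $s$ but only $L^1$ in $s$. The hypotheses combine into the key pointwise bound $\|F(s,y)\| \leq \kappa(1+\|y\|)(1+m(s))$, where $m(s) := \sup_{y\in\mathbb{R}^d}\|u(s,y)\|$ satisfies $\int_0^T m(s)\,ds \leq M$ and is measurable by continuity of $u(s,\cdot)$ (reducing the sup to a countable dense set). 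I would construct $x$ by an Euler polygonal scheme rather than by Picard iteration, both because $F$ need not be Lipschitz in $y$ and because the composition $u(s,x(s))$ of $u$ with a general continuous curve is awkward to handle directly from a measurability standpoint.

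Concretely, I would partition $[0,T]$ using $t_k^n = kT/n$ and define $x^n$ inductively by $x^n(0) = x_0$ and, for $t \in [t_k^n, t_{k+1}^n]$,
\begin{equation*}
x^n(t) = x^n(t_k^n) + \int_{t_k^n}^t \bigl[a(s, x^n(t_k^n)) + b(s, x^n(t_k^n))\, u(s, x^n(t_k^n))\bigr]\,ds.
\end{equation*}
Since $x^n(t_k^n)$ is a fixed vector on each subinterval, the integrand is measurable in $s$ and dominated by the integrable function $\kappa(1+\|x^n(t_k^n)\|)(1+m(s))$, so $x^n$ is well defined and continuous. Applying this bound to $1+\|x^n(t)\|$ and invoking Gronwall's inequality with respect to the finite measure $\kappa(1+m(s))\,ds$ of total mass $\kappa(T+M)$ yields the uniform estimate $\sup_n\sup_{t\in[0,T]} \|x^n(t)\| \leq (1+\|x_0\|)e^{\kappa(T+M)}$, which gives the asserted pointwise bound. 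The same domination implies $\|x^n(t)-x^n(s)\| \leq C\int_s^t(1+m(r))\,dr$ for a constant $C$ independent of $n$, so $\{x^n\}$ is equicontinuous by absolute continuity of the Lebesgue integral.

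By Arzel\`a--Ascoli I extract a subsequence $x^{n_k} \to x$ uniformly on $[0,T]$. To pass to the limit in \eqref{eq:int}, I fix $s$ at which $a(s,\cdot)$, $b(s,\cdot)$, and $u(s,\cdot)$ are simultaneously continuous (a full-measure set). Since $x^{n_k}(\lfloor s\rfloor_{n_k}) \to x(s)$, continuity delivers pointwise convergence of the integrands, and the uniform-in-$n$ dominator $\kappa(1+\sup_n\|x^n\|_\infty)(1+m(s)) \in L^1([0,T])$ justifies dominated convergence, producing a continuous $x$ solving \eqref{eq:int}. The main obstacle is the combination of unboundedness of $u$ in $y$ with mere integrability (not boundedness) in $s$: a Picard contraction is unavailable, and checking measurability of $u(s,x(s))$ for a general continuous $x$ is precisely why freezing $x^n$ on subintervals is the right device. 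The hypothesis $\int_0^T m(s)\,ds \leq M$ is used twice in an essential way, once to produce the Gronwall constant and once as the $L^1$ dominator in the passage to the limit.
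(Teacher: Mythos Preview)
Your argument is correct, but the paper actually takes the Picard route you dismissed. It sets $x^0\equiv x_0$ and $x^n(t)=x_0+\int_0^t a(s,x^{n-1}(s))\,ds+\int_0^t b(s,x^{n-1}(s))u(s,x^{n-1}(s))\,ds$, proves the uniform bound by the explicit recursion $\|x^n(t)\|\le L\sum_{j=0}^n \kappa^j\beta(t)^j/j!$ with $L=\|x_0\|+\kappa(M+T)$ and $\beta(t)=\int_0^t(1+\sup_y\|u(s,y)\|)\,ds$, establishes equicontinuity exactly as you do, and then passes to the limit along an Arzel\`a--Ascoli subsequence via dominated convergence. So the endgame---compactness, not contraction---is identical; only the approximating sequence differs. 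Your stated reasons for avoiding Picard are slightly off the mark: the absence of a Lipschitz condition rules out Picard-as-contraction but not Picard-as-precompact-sequence, and the measurability of $s\mapsto u(s,x^{n-1}(s))$ for continuous $x^{n-1}$ follows from joint measurability of $u$ without needing to freeze the state. What your Euler scheme genuinely buys is a cleaner one-shot Gronwall (the paper's bound emerges from a recursive expansion instead); what the paper's Picard scheme buys is that the limit identification is marginally simpler since $x^{n-1}(s)\to x(s)$ directly rather than through $x^{n_k}(\lfloor s\rfloor_{n_k})\to x(s)$. One minor point: applying Gronwall to $1+\|x^n(t)\|$ as you write it gives $(1+\|x_0\|)e^{\kappa(M+T)}$ rather than the stated constant; to recover the exact bound $(\|x_0\|+\kappa(M+T))e^{\kappa(M+T)}$, separate out the $1$ from $(1+\|x^n\|)$ first and then Gronwall on $\sup_{r\le s}\|x^n(r)\|$.
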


\begin{lemma}
\label{lem:ddim} Let $\{a^{d},g^{d}\}_{d\in \mathbb{N}}$ be a sequence of
maps, $a^{d}:[0,T]\times \mathbb{R}^{d}\rightarrow \mathbb{R}^{d}$ and $%
g^{d}:[0,T]\times \mathbb{R}^{d}\times \mathbb{X}\rightarrow \mathbb{R}^{d}$%
, such that the following hold.
%there exists constant $K$, $K_d$ and $\delta$,

\begin{enumerate}
\item For each $s\in \lbrack 0,T]$ and $y\in \mathbb{R}^{d}$, $%
g^{d}(s,y,\cdot )\in L^{2}(\mathbb{X},\nu ;\mathbb{R}^{d})$ and for each $%
s\in \lbrack 0,T]$, the maps $y\mapsto a^{d}(s,y)$ and $y\mapsto
g^{d}(s,y,\cdot )$ (from $\mathbb{R}^{d}$ to $L^{2}(\mathbb{X},\nu ;\mathbb{R%
}^{d})$) are continuous.

\item For some $\kappa \in (0,\infty )$ and all $d\in \mathbb{N}$,
%each $s \in [0,T]$ and $y \in \mathbb{R}^d$,
\begin{equation*}
2\langle a^{d}(s,y),y\rangle \leq \kappa (1+||y||^{2}),\quad \forall
(s,y)\in \lbrack 0,T]\times \mathbb{R}^{d}
\end{equation*}%
and
\begin{equation*}
\int_{\mathbb{X}}||g^{d}(s,v)||_{0}^{2}\nu (dv)\leq \kappa ,\quad \forall
s\in \lbrack 0,T],
\end{equation*}%
where $||g^{d}(s,v)||_{0}=\sup_{y\in \mathbb{R}^{d}}\frac{||g^{d}(s,y,v)||}{%
1+||y||}$.

\item For each $d\in \mathbb{N}$, there exists $\kappa _{d}\in (0,\infty )$
with
\begin{equation*}
||a^{d}(s,y)||\leq \kappa _{d}(1+||y||),\quad \forall (s,y)\in \lbrack
0,T]\times \mathbb{R}^{d}.
\end{equation*}

\item There is a $\delta _{0}\in (0,\infty )$ such that for all $E\in
\mathcal{B}([0,T]\times \mathbb{X})$ satisfying $\nu _{T}(E)<\infty $,
\begin{equation*}
\int_{E}e^{\delta _{0}||g^{d}(s,v)||_{0}}\nu (dv)ds<\infty .
\end{equation*}
\end{enumerate}

Then for any $d\in \mathbb{N}$, $\psi \in \mathbb{S}$ and $x_{0}^{d}\in
\mathbb{R}^{d}$, the equation
\begin{equation}
{x}^{d}(t)=x_{0}^{d}+\int_{0}^{t}a^{d}(s,{x}^{d}(s))ds+\int_{0}^{t}\int_{%
\mathbb{X}}g^{d}(s,{x}^{d}(s),v)(\psi (s,v)-1)\nu (dv)ds  \label{eq:d}
\end{equation}%
has a solution ${x}^{d}\in C([0,T]:\mathbb{R}^{d})$. Suppose that $%
\sup_{d\in \mathbb{N}}||x_{0}^{d}||^{2}<\infty $. Then for every $M\in
(0,\infty )$, there exists a $\tilde{\kappa}_{M}\in (0,\infty )$ such that
\begin{equation*}
\sup_{d\in \mathbb{N}}\sup_{t\in \lbrack 0,T]}||x^{d}(t)||^{2}\leq \tilde{%
\kappa}_{M},\mbox{ whenever $\psi \in S^{M}$}.
\end{equation*}
\end{lemma}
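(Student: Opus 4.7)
The plan is to establish existence via the finite-dimensional ODE result Lemma \ref{lem:exist}, and then obtain the uniform bound by a Gronwall-type argument driven by the coercivity hypothesis (ii).

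For existence for a fixed $d$, I would recast \eqref{eq:d} into the form \eqref{eq:int} of Lemma \ref{lem:exist} by setting
$$
a(s,y)=a^{d}(s,y),\qquad b(s,y)=1+\|y\|,\qquad u(s,y)=\frac{1}{1+\|y\|}\int_{\mathbb{X}}g^{d}(s,y,v)\bigl(\psi(s,v)-1\bigr)\nu(dv).
$$
Assumption (iii) gives the required linear growth of $a$ with constant $\kappa_{d}$, and $b$ trivially satisfies $|b(s,y)|\le 1+\|y\|$. Using $\|g^{d}(s,y,v)\|\le\|g^{d}(s,v)\|_{0}(1+\|y\|)$ from the definition of $\|g^{d}(s,v)\|_{0}$,
$$
\sup_{y\in\mathbb{R}^{d}}\|u(s,y)\|\le\int_{\mathbb{X}}\|g^{d}(s,v)\|_{0}|\psi(s,v)-1|\nu(dv),
$$
and the proof of Lemma \ref{lem:Gg} (which only uses the $L^{2}$ bound on $\|g\|_{0}$ and the exponential integrability of $\|g\|_{0}$) goes through verbatim under (ii) and (iv) to yield $\int_{0}^{T}\sup_{y}\|u(s,y)\|ds<\infty$.

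Continuity of $u(s,\cdot)$ is the delicate point. Given $y_{n}\to y$, I would split $|\psi-1|=|\psi-1|\mathbf{1}_{\{|\psi-1|\le\beta\}}+|\psi-1|\mathbf{1}_{\{|\psi-1|>\beta\}}$. On $\{|\psi-1|\le\beta\}$, Cauchy–Schwarz combined with the $L^{2}$ continuity of $y\mapsto g^{d}(s,y,\cdot)$ from (i) and the bound $|\psi-1|^{2}\le c_{2}(\beta)l(\psi)$ (Remark \ref{rmk:l}(3)) makes that contribution vanish as $n\to\infty$. On the complement, $|\psi-1|\le c_{1}(\beta)l(\psi)$ together with the dominant $\|g^{d}(s,y_{n},v)-g^{d}(s,y,v)\|\le C\|g^{d}(s,v)\|_{0}$ for $y_{n}$ in a bounded neighborhood of $y$ gives a bound of order $c_{1}(\beta)$ uniformly in $n$, which is then made arbitrarily small by taking $\beta\to\infty$. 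This verifies the hypotheses of Lemma \ref{lem:exist} and produces $x^{d}\in C([0,T]:\mathbb{R}^{d})$ solving \eqref{eq:d}.

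For the uniform bound, I would take the squared norm of \eqref{eq:d}:
$$
\|x^{d}(t)\|^{2}=\|x_{0}^{d}\|^{2}+2\int_{0}^{t}\langle x^{d}(s),a^{d}(s,x^{d}(s))\rangle ds+2\int_{0}^{t}\!\!\int_{\mathbb{X}}\langle x^{d}(s),g^{d}(s,x^{d}(s),v)\rangle(\psi(s,v)-1)\nu(dv)ds.
$$
The coercivity condition (ii) bounds the first integrand by $\kappa(1+\|x^{d}(s)\|^{2})$, while Cauchy–Schwarz and the linear growth of $g^{d}$ in $y$ bound the second by $2(1+\|x^{d}(s)\|^{2})h^{d}(s)$, where
$$
h^{d}(s)=\int_{\mathbb{X}}\|g^{d}(s,v)\|_{0}|\psi(s,v)-1|\nu(dv).
$$
Gronwall's inequality then yields
$$
1+\|x^{d}(t)\|^{2}\le\bigl(1+\|x_{0}^{d}\|^{2}\bigr)\exp\Bigl(\kappa T+2\int_{0}^{T}h^{d}(s)ds\Bigr).
$$

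The main obstacle is ensuring that $\int_{0}^{T}h^{d}(s)ds$ is bounded uniformly in $d$ for $\psi\in S^{M}$. Assumptions (ii) and (iv) are applied with a single $\delta_{0}$ and a single $L^{2}$ constant $\kappa$ valid for the whole family $\{g^{d}\}$, so the decomposition used in the proof of Lemma \ref{lem:Gg} (splitting over $\{\|g^{d}\|_{0}\ge 1\}$ via the inequality $ab\le e^{\sigma a}+\sigma^{-1}l(b)$ with $a=\|g^{d}\|_{0}$, $b=\psi$ or $1$, together with Remark \ref{rmk:l}(2)–(3) on the complement) produces a bound depending only on $\kappa$, $\delta_{0}$, $M$, and $T$. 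Combining this with $\sup_{d}\|x_{0}^{d}\|^{2}<\infty$ delivers the required uniform constant $\tilde{\kappa}_{M}$.
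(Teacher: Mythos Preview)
Your proposal is correct and follows essentially the same route as the paper: the same choice of $a,b,u$ to invoke Lemma~\ref{lem:exist}, the same $\{|\psi-1|\le\beta\}$ splitting for the continuity of $u(s,\cdot)$, and the same energy estimate on $\|x^{d}(t)\|^{2}$ leading via Gronwall to a bound governed by $\int_{0}^{T}h^{d}(s)\,ds$, whose uniformity in $d$ and over $S^{M}$ is obtained exactly as in Lemma~\ref{lem:Gg}. The only minor deviation is on the set $\{|\psi-1|>\beta\}$ in the continuity argument, where the paper instead uses $\nu\{|\psi-1|>\beta\}\to 0$ together with the already established integrability of $\|g^{d}(s,\cdot)\|_{0}|\psi(s,\cdot)-1|$; your route via $c_{1}(\beta)$ would additionally require $\int\|g^{d}(s,v)\|_{0}\,l(\psi(s,v))\,\nu(dv)<\infty$, which is not immediate, so the paper's version is cleaner here.
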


\begin{proof}
For each $d$ fixed, equation \eqref{eq:d} is the same as \eqref{eq:int} with
the following choices of $a$, $b$ and $u$:
\begin{equation*}
a(s,y)=a^{d}(s,y),
\end{equation*}%
\begin{equation*}
b(s,y)=1+||y||,
\end{equation*}%
and
\begin{equation*}
u(s,y)=\int_{\mathbb{X}}\frac{g^{d}(s,y,v)}{1+||y||}(\psi (s,v)-1)\nu (dv).
\end{equation*}%
Thus in order to prove the existence of the solutions to \eqref{eq:d}, it
suffices to verify conditions in Lemma \ref{lem:exist}. The continuity of $a$%
, $b$ and first condition in Lemma \ref{lem:exist} are immediate. The proof
of the statement%
\begin{equation}
y\mapsto u(s,y)\mbox{ is continuous for a.e. }s\in \lbrack 0,T]
\label{u_cont}
\end{equation}%
is given in the appendix. Finally note that
\begin{equation*}
\int_{0}^{T}\sup_{y\in \mathbb{R}^{d}}||u(s,y)||ds\leq \int_{0}^{T}\int_{%
\mathbb{X}}||g^{d}(s,v)||_{0}|\psi (s,v)-1|\nu (dv)ds<\infty ,
\end{equation*}%
where the last inequality follows from conditions (b) and (d) using a
similar argument as for \eqref{eq:G01g}. Thus from Lemma \ref{lem:exist},
for each $d\in \mathbb{N}$, there exists a ${x}^{d}\in C([0,T]:\mathbb{R}%
^{d})$ satisfying \eqref{eq:d}. Next note that
\begin{equation}
\begin{split}
||{x}^{d}(t)||^{2}& =||x_{0}^{d}||^{2}+2\int_{0}^{t}\left\langle {x}%
^{d}(s),\left( a^{d}(s,{x}^{d}(s))+\int_{\mathbb{X}}g^{d}(s,{x}%
^{d}(s),v)(\psi (s,v)-1)\nu (dv)\right) \right\rangle ds \\
& \leq ||x_{0}^{d}||^{2}+2\int_{0}^{t}\left\langle {x}^{d}(s),a^{d}(s,{x}%
^{d}(s))\right\rangle ds \\
& \quad +2\int_{0}^{t}||{x}^{d}(s)||\int_{\mathbb{X}}||g^{d}(s,{x}%
^{d}(s),v)||\;|\psi (s,v)-1|\nu (dv)ds \\
& \leq ||x_{0}^{d}||^{2}+\kappa \int_{0}^{t}(1+||{x}^{d}(s)||^{2})ds \\
& \quad +2\int_{0}^{t}||{x}^{d}(s)||(1+||{x}^{d}(s)||)\int_{\mathbb{X}%
}||g^{d}(s,v)||_{0}|\psi (s,v)-1|\nu (dv)ds
\end{split}
\label{eq:14}
\end{equation}%
Let
\begin{equation*}
f^{d}(s)=\int_{\mathbb{X}}||g^{d}(s,v)||_{0}|\psi (s,v)-1|\nu (dv).
\end{equation*}%
Then as before, using (b) and (d), we have that
\begin{equation}
\sup_{\psi \in S^{M}}\sup_{d\in \mathbb{N}}\int_{0}^{T}f^{d}(s)ds<\infty .
\label{eq:15a}
\end{equation}%
Also, from \eqref{eq:14} and using that $c+c^{2}\leq 1+2c^{2}$ for $c\geq 0$,%
\begin{equation*}
||{x}^{d}(t)||^{2}\leq \left( ||x_{0}^{d}||^{2}+\kappa
T+2\int_{0}^{T}f^{d}(s)ds\right) +\int_{0}^{t}||{x}^{d}(s)||^{2}(\kappa
+4f^{d}(s))ds.
\end{equation*}%
Thus, by Gronwall's inequality
\begin{equation*}
||{x}^{d}(t)||^{2}\leq \left( ||x_{0}^{d}||^{2}+\kappa
T+2\int_{0}^{T}f^{d}(s)ds\right) e^{\kappa t+4\int_{0}^{t}f^{d}(s)ds}.
\end{equation*}%
Hence if $\sup_{d\in \mathbb{N}}||x_{0}^{d}||^{2}<\infty $, then by %
\eqref{eq:15a}%
\begin{equation*}
\sup_{\psi \in S^{M}}\sup_{d\in \mathbb{N}}\sup_{t\in \lbrack
0,T]}||x^{d}(t)||^{2}<\infty .
\end{equation*}%
The lemma follows.
\end{proof}

We are now ready to prove Theorem \ref{thm:existuniq}.

\begin{proof}[Proof of Theorem \protect\ref{thm:existuniq}]
We first argue the existence of the solutions to \eqref{eq:g01}. Let $M\in
\mathbb{N}$ be such that $g\in S^{M}$. Recall the CONS $\{\phi _{k}^{p}\}$
defined by $\phi _{k}^{p}=\phi _{k}\left\Vert \phi _{k}\right\Vert
_{p}^{-1}\in \Phi _{p}$ that was introduced below Definition \ref{def: CHNS}%
. Fix $d\in \mathbb{N}$ and let $\pi :\Phi _{-p}\rightarrow \mathbb{R}^{d}$
be the mapping given by
\begin{equation*}
\pi (u)_{k}=u[\phi _{k}^{p}],\quad k=1,2,\ldots ,d
\end{equation*}%
and denote $\pi (X_{0})$ by $x_{0}^{d}$. Define $a^{d}:[0,T]\times \mathbb{R}%
^{d}\rightarrow \mathbb{R}^{d}$ and $g^{d}:[0,T]\times \mathbb{R}^{d}\times
\mathbb{X}\rightarrow \mathbb{R}^{d}$ by
\begin{equation*}
a^{d}(s,x)_{k}=A\left( s,\sum_{j=1}^{d}x_{j}\phi _{j}^{-p}\right) [\phi
_{k}^{p}]
\end{equation*}%
and
\begin{equation*}
g^{d}(s,x,v)_{k}=G\left( s,\sum_{j=1}^{d}x_{j}\phi _{j}^{-p},v\right) [\phi
_{k}^{p}].
\end{equation*}%
It is easy to verify that $a^{d}$ and $g^{d}$ satisfy the assumptions of
Lemma \ref{lem:ddim}, and therefore there exists $x^{d}\in C([0,T]:\mathbb{R}%
^{d})$ which satisfies \eqref{eq:d} with $\psi $ replaced by $g$. Define the
$\Phi _{-p}$-valued continuous function $X^{d}$, associated with $x^{d}$, by
\begin{equation*}
X_{t}^{d}=\sum_{k=1}^{d}(x_{t}^{d})_{k}\phi _{k}^{-p}.
\end{equation*}%
Then with $\tilde{\kappa}_{M}$ as in Lemma \ref{lem:ddim}, we have
\begin{equation}
\sup_{d\in \mathbb{N}}\sup_{t\in \lbrack 0,T]}||X_{t}^{d}||_{-p}^{2}\leq
\tilde{\kappa}_{M}.  \label{eq:20}
\end{equation}

Recalling the definition of $u[\phi ]$ from (\ref{eqn:defsqbrkt}), let $%
\gamma ^{d}:\Phi ^{\prime }\rightarrow \Phi ^{\prime }$ be a mapping given
by
\begin{equation*}
\gamma ^{d}u=\sum_{k=1}^{d}u[\phi _{k}^{p}]\phi _{k}^{-p}.
\end{equation*}%
Let, for $d\in \mathbb{N}$, $A^{d}:[0,T]\times \Phi ^{\prime }\rightarrow
\Phi ^{\prime }$ and $G^{d}:[0,T]\times \Phi ^{\prime }\times \mathbb{X}%
\rightarrow \Phi ^{\prime }$ be measurable mappings given by
\begin{equation*}
A^{d}(s,u)=\gamma ^{d}A(s,\gamma ^{d}u)\quad \mbox{ and }\quad
G^{d}(s,u,v)=\gamma ^{d}G(s,\gamma ^{d}u,v).
\end{equation*}%
Then $X^{d}$ solves
\begin{equation*}
X_{t}^{d}[\phi ]=X_{0}^{d}[\phi ]+\int_{0}^{t}A^{d}(s,{X}_{s}^{d})[\phi
]ds+\int_{0}^{t}\int_{\mathbb{X}}G^{d}(s,{X}_{s}^{d},v)[\phi ](g(s,v)-1)\nu
(dv)ds,\quad \phi \in \Phi .
\end{equation*}

We now argue that for each $\phi \in \Phi $, the family $\{{X}^{d}[\phi
]\}_{d\in \mathbb{N}}$ is pre-compact in $C([0,T]:\mathbb{R})$. From %
\eqref{eq:20}, we have
\begin{equation}
\sup_{d}\sup_{t\in \lbrack 0,T]}|{X}_{t}^{d}[\phi ]|\leq \sup_{d}\sup_{t\in
\lbrack 0,T]}||{X}_{t}^{d}||_{-p}||\phi ||_{p}\leq \sqrt{\tilde{\kappa}_{M}}%
||\phi ||_{p}<\infty .  \label{eq:21}
\end{equation}%
Now we consider fluctuations of ${X}^{d}[\phi ]$. For $0\leq s\leq t\leq T$,
\begin{equation*}
\begin{split}
|{X}_{t}^{d}[\phi ]-{X}_{s}^{d}[\phi ]|\leq & \int_{s}^{t}|A^{d}(r,{X}%
_{r}^{d})[\phi ]|dr+\int_{s}^{t}\int_{\mathbb{X}}|G^{d}(r,{X}%
_{r}^{d},v)[\phi ]|\;|g(r,v)-1|\nu (dv)dr \\
\leq & \int_{s}^{t}||A^{d}(r,{X}_{r}^{d})||_{-q}||\phi
||_{q}dr+\int_{s}^{t}\int_{\mathbb{X}}||G^{d}(r,{X}_{r}^{d},v)||_{-p}||\phi
||_{p}|g(r,v)-1|\nu (dv)dr.
\end{split}%
\end{equation*}%
Also, for $(s,u)\in \lbrack 0,T]\times \Phi ^{\prime }$
\begin{equation*}
\begin{split}
||A^{d}(s,u)||_{-q}^{2}& =\left\vert \left\vert \sum_{k=1}^{d}A(s,\gamma
^{d}u)[\phi _{k}^{p}]\phi _{k}^{-p}\right\vert \right\vert _{-q}^{2} \\
& =\left\vert \left\vert \sum_{k=1}^{d}A(s,\gamma ^{d}u)[\phi _{k}^{q}]\phi
_{k}^{-q}\right\vert \right\vert _{-q}^{2} \\
& =\sum_{k=1}^{d}\left( A(s,\gamma ^{d}u)[\phi _{k}^{q}]\right) ^{2} \\
& \leq ||A(s,\gamma ^{d}u)||_{-q}^{2} \\
& \leq K\left( 1+||\gamma ^{d}u||_{-p}^{2}\right) \\
& \leq K\left( 1+||u||_{-p}^{2}\right) ,
\end{split}%
\end{equation*}%
where for the second equality we use the observation
\begin{equation*}
u[\phi _{j}^{q}]\phi _{j}^{-q}=u[\phi _{j}^{p}]\phi _{j}^{-p},\quad \forall
u\in \Phi ^{\prime },\ p,q\geq 0,
\end{equation*}%
and the last inequality follows on observing that
\begin{equation*}
||\gamma ^{d}u||_{-p}^{2}\leq ||u||_{-p}^{2},\quad \forall p\geq 0.
\end{equation*}%
Similarly,
\begin{equation*}
\begin{split}
||G^{d}(s,u,v)||_{-p}^{2}& =\left\vert \left\vert \sum_{k=1}^{d}G(s,\gamma
^{d}u,v)[\phi _{k}^{p}]\phi _{k}^{-p}\right\vert \right\vert _{-p}^{2} \\
& =\sum_{k=1}^{d}\left( G(s,\gamma ^{d}u,v)[\phi _{k}^{p}]\right) ^{2} \\
& \leq ||G(s,\gamma ^{d}u,v)||_{-p}^{2}.
\end{split}%
\end{equation*}

Combining the above estimates we have
\begin{equation*}
\begin{split}
|{X}_{t}^{d}[\phi ]-{X}_{s}^{d}[\phi ]|& \leq ||\phi ||_{q}\sqrt{K}\sqrt{1+%
\tilde{\kappa}_{M}}(t-s) \\
& \quad +||\phi ||_{p}(1+\sqrt{\tilde{\kappa}_{M}})\int_{s}^{t}\int_{\mathbb{%
X}}||G(r,v)||_{0,-p}|g(r,v)-1|\nu (dv)dr.
\end{split}%
\end{equation*}%
By Lemma \ref{lem:Gg} we now see that
\begin{equation}
\lim_{\delta \rightarrow 0}\sup_{d\in \mathbb{N}}\sup_{|t-s|\leq \delta }|{X}%
_{t}^{d}[\phi ]-{X}_{s}^{d}[\phi ]|=0.  \label{eq:22}
\end{equation}%
Combining \eqref{eq:21} and \eqref{eq:22} we now have that the family $\{{X}%
^{d}[\phi ]\}$ is pre-compact in $C([0,T]:\mathbb{R})$ for every $\phi \in
\Phi $. Combining this with \eqref{eq:20} we have that $\{{X}^{d}\}_{d\in
\mathbb{N}}$ is pre-compact in $C([0,T]:\Phi _{-p_{1}})$ (cf. Theorem 2.5.2
in \cite{Kallianpur95}). Let $\tilde{X}$ be any limit point. Then by the
dominated convergence theorem and the definitions of $A^{d}$ and $G^{d}$
(see Lemma 6.1.6 and Theorem 6.2.2 of \cite{Kallianpur95}), $\tilde{X}$
satisfies the integral equation \eqref{eq:g01}. Note that the argument also
shows that whenever $g\in S^{M}$, $\sup_{t\in \lbrack 0,T]}||\tilde{X}%
_{t}||_{-p}^{2}\leq \tilde{\kappa}_{M}$.

Next, we argue uniqueness of solutions. Suppose there are two elements $%
\tilde{X}$ and $\bar{X}$ of $C([0,T]:\Phi _{-p_{1}})$ such that both satisfy %
\eqref{eq:g01}. Then, using Condition \ref{assump:sde} (d),
\begin{equation*}
\begin{split}
||\tilde{X}_{t}-\bar{X}_{t}||_{-q}^{2}&= 2\int_{0}^{t}\langle A(s,\tilde{X}%
_{s})-A(s,\bar{X}_{s}),\tilde{X}_{s}-\bar{X}_{s}\rangle _{-q}ds \\
&\quad +2\int_{0}^{t}\int_{\mathbb{X}}\langle G(s,\tilde{X}_{s},v)-G(s,\bar{X%
}_{s},v),\tilde{X}_{s}-\bar{X}_{s}\rangle _{-q}(g(s,v)-1)\nu (dv)ds \\
&\leq K\int_{0}^{t}||\tilde{X}_{s}-\bar{X}_{s}||_{-q}^{2}ds \\
& \quad +2\int_{0}^{t}||\tilde{X}_{s}-\bar{X}_{s}||_{-q}^{2}\int_{\mathbb{X}%
}||G(s,v)||_{1,-q}|g(s,v)-1|\nu (dv)ds.
\end{split}%
\end{equation*}%
Also, by Remark \ref{rmk:Gg2},
\begin{equation*}
\int_{0}^{T}\int_{\mathbb{X}}||G(s,v)||_{1,-q}|g(s,v)-1|\nu (dv)ds<\infty .
\end{equation*}%
An application of Gronwall's inequality now shows that $||\tilde{X}_{t}-\bar{%
X}_{t}||_{-q}^{2}=0$ for all $t\in \lbrack 0,T]$. Uniqueness follows.
\end{proof}

\subsection{Proof of Theorem \protect\ref{Thm: LDPg}}

\label{sec:prldp} From Theorem \ref{Thm: strsol} and by the classical
Yamada-Watanabe argument (cf. \cite{Ikeda}), for each $\epsilon >0$, there
exists a measurable map $\mathcal{G}^{\epsilon }:\mathbb{M}\rightarrow
D([0,T]:\Phi _{-p_{1}})$ such that, for any PRM $\mathbf{n}^{\epsilon ^{-1}}$
on $[0,T]\times \mathbb{X}$ with mean measure $\epsilon ^{-1}\lambda
_{T}\otimes \nu $ given on some filtered probability space, $\mathcal{G}%
^{\epsilon }(\epsilon \mathbf{n}^{\epsilon ^{-1}})$ is the unique $\Phi
_{-p_{1}}$ valued strong solution of \eqref{eq:sdeg} (with $\tilde{N}%
^{\epsilon ^{-1}}$ replaced by $\tilde{\mathbf{n}}^{\epsilon ^{-1}}={\mathbf{%
n}}^{\epsilon ^{-1}}-\epsilon ^{-1}\lambda _{T}\otimes \nu $) with initial
value $X_{0}$, where $p_{1}$ is as in the statement of Theorem \ref{Thm:
strsol}. In particular, $X^{\epsilon }=\mathcal{G}^{\epsilon }(\epsilon
N^{\epsilon ^{-1}})$ is the strong solution of \eqref{eq:sdeg} with initial
value $X_{0}$ on $(\bar{\mathbb{M}},\mathcal{B}(\bar{\mathbb{M}}),\bar{%
\mathbb{P}},\{\bar{\mathcal{F}}_{t}\})$. In view of this observation, for
proof of Theorem \ref{Thm: LDPg}, it suffices to verify Condition \ref%
{Cond:LDP2}.

We begin with the following lemma.

\begin{lemma}
\label{lem:gng} Fix $N\in \mathbb{N}$, and let $g_{n},g\in S^{N}$ be such
that $g_{n}\rightarrow g$ as $n\rightarrow \infty $. Let $h:[0,T]\times
\mathbb{X}\rightarrow \mathbb{R}$ be a measurable function such that
\begin{equation}
\int_{\mathbb{X}_{T}}|h(s,v)|^{2}\nu _{T}(dvds)<\infty ,  \label{eq:15}
\end{equation}%
and for all $\delta _{2}\in (0,\infty )$
\begin{equation}
\int_{E}e^{\delta _{2}|h(s,v)|}\nu _{T}(dvds)<\infty ,  \label{eq:23}
\end{equation}%
for all $E\in \mathcal{B}([0,T]\times \mathbb{X})$ satisfying $\nu
_{T}(E)<\infty $. Then
\begin{equation}
\int_{\mathbb{X}_{T}}h(s,v)(g_{n}(s,v)-1)\nu _{T}(dvds)\rightarrow \int_{%
\mathbb{X}_{T}}h(s,v)(g(s,v)-1)\nu _{T}(dvds)  \label{eq:17}
\end{equation}%
as $n\rightarrow \infty $.
\end{lemma}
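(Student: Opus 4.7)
By the definition of the topology on $S^N$, $g_n \to g$ means $\int f\, g_n\, d\nu_T \to \int f\, g\, d\nu_T$ for every $f \in C_c(\mathbb{X}_T)$. Since $\int f\, d\nu_T$ is a fixed finite number for such $f$, the claimed convergence \eqref{eq:17} holds trivially when $h$ is replaced by any $f \in C_c(\mathbb{X}_T)$. The plan is thus a standard $\varepsilon/3$ truncation argument: for each $\varepsilon > 0$, produce $f_\varepsilon \in C_c(\mathbb{X}_T)$ with
\begin{equation*}
\sup_{g' \in S^N} \int_{\mathbb{X}_T} |h - f_\varepsilon|\,|g' - 1|\, d\nu_T \le \varepsilon,
\end{equation*}
then apply the already-established convergence for $f_\varepsilon$ and let $\varepsilon \to 0$.

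\textbf{Uniform approximation estimate.} Fix an increasing sequence of compacts $K_m \uparrow \mathbb{X}$ and set $h_{m,R} = h\,\mathbf{1}_{[0,T]\times K_m}\,\mathbf{1}_{\{|h|\le R\}}$. The key step is
\begin{equation*}
\lim_{m,R \to \infty}\, \sup_{g' \in S^N} \int_{\mathbb{X}_T} |h - h_{m,R}|\,|g' - 1|\, d\nu_T = 0.
\end{equation*}
Its proof follows the template of Lemma \ref{lem:Gg}. Split the integration region simultaneously in two ways. First, using Remark \ref{rmk:l}, $|g'-1| \le c_1(\theta) l(g')$ on $\{|g'-1| > \theta\}$ and $|g'-1|^2 \le c_2(\theta) l(g')$ on its complement. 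Second, using $E_h := \{|h| \ge 1\}$, which has finite $\nu_T$-measure by Markov and \eqref{eq:15}: on $E_h$, apply the Young inequality \eqref{ineq} in the form $|h|\cdot r \le e^{\sigma|h|} + \sigma^{-1} l(r)$ together with the local exponential integrability \eqref{eq:23}; on $E_h^c$ one has $|h| \le 1$, so use Cauchy--Schwarz against $\sqrt{l(g')}$ and the $L^2$ bound on $h$. The universal Orlicz bound $\int l(g')\, d\nu_T \le N$ makes everything uniform in $g' \in S^N$. The resulting estimate has two types of terms: ``parameter'' terms proportional to $c_1(\theta) N$ and $\sigma^{-1} N$, which are made small by choosing $\theta, \sigma$ large first; and ``tail'' terms $\int_{E_h \cap (([0,T]\times K_m^c) \cup \{|h|>R\})} e^{\sigma|h|}\, d\nu_T$ and $\int_{[0,T]\times K_m^c\, \cup\, \{|h|>R\}} |h|^2\, d\nu_T$, which, for $\theta, \sigma$ now fixed, vanish as $m, R \to \infty$ by dominated convergence using \eqref{eq:15} and \eqref{eq:23}.

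\textbf{From $h_{m,R}$ to $f_\varepsilon$ and conclusion.} Since $\nu_T$ is a Radon measure on the locally compact space $\mathbb{X}_T$ and $h_{m,R}$ is supported on $[0,T]\times K_m$ (of finite $\nu_T$-measure), standard density gives $f_\varepsilon \in C_c(\mathbb{X}_T)$ with $|f_\varepsilon| \le R$ and $\|h_{m,R} - f_\varepsilon\|_{L^2(\nu_T)}$ arbitrarily small. Re-running the same two-way split—now using $|h_{m,R}-f_\varepsilon| \le 2R$ on the $\{|g'-1|>\theta\}$ piece in place of the exponential integrability, and $L^2$-Cauchy--Schwarz on the other piece—shows $\sup_{g' \in S^N} \int |h_{m,R} - f_\varepsilon|\,|g'-1|\, d\nu_T \to 0$ as $\|h_{m,R} - f_\varepsilon\|_{L^2(\nu_T)} \to 0$. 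Combining with the preceding paragraph yields the required $f_\varepsilon$, and the $\varepsilon/3$ argument (applied to $g_n$ and to $g$ simultaneously, both inside $S^N$) completes the proof. The main technical obstacle is the uniform tail control in the central estimate: $\nu_T$ may have infinite total mass, so \eqref{eq:23} provides only local exponential control, and it is the simultaneous splitting by the magnitudes of $h$ and of $g'-1$, combined with the Orlicz bound $L_T(g') \le N$ via \eqref{ineq}, that makes the estimates pass through.
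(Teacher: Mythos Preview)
Your argument is correct and proceeds along the same lines as the paper's proof in its overall architecture: both reduce the problem via uniform-in-$g'\in S^N$ tail estimates obtained from the two-way split of Remark~\ref{rmk:l} (magnitude of $g'-1$) combined with the split $\{|h|\geq 1\}$ versus $\{|h|<1\}$, using the Young-type inequality~\eqref{ineq} and the Orlicz bound $L_T(g')\leq N$ exactly as in Lemma~\ref{lem:Gg}.

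The genuine difference is in the ``core'' step, once one has reduced to a bounded $h$ supported on a set of finite $\nu_T$-measure. The paper at that point normalizes $g_n\nu_T$ and $\nu_T$ to probability measures on $[0,T]\times K$, observes that the relative entropies are uniformly bounded, and invokes Lemma~2.8 of \cite{bodu98}, which upgrades weak convergence to convergence of integrals of \emph{bounded measurable} test functions under a uniform entropy bound. You instead stay at the level of the defining topology (test functions in $C_c(\mathbb{X}_T)$) and close the gap by $L^2(\nu_T)$-approximation of $h_{m,R}$ by $f_\varepsilon\in C_c$, controlling $\int|h_{m,R}-f_\varepsilon|\,|g'-1|\,d\nu_T$ uniformly via the same Remark~\ref{rmk:l} split (first choose $\theta$ large so that $2Rc_1(\theta)N$ is small, then choose $f_\varepsilon$ close in $L^2$ to handle the $\{|g'-1|\leq\theta\}$ piece). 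Your route is more self-contained, avoiding the external reference; the paper's route is shorter once that lemma is granted. Both are valid.
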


\begin{proof}
We first argue that given $\epsilon >0$, there exists a compact set $%
K\subset \mathbb{X}$, such that%
\begin{equation}
\sup_{n}\int_{[0,T]\times K^{c}}|h(s,v)||g_{n}(s,v)-1|\nu (dv)ds\leq
\epsilon .  \label{eq:16}
\end{equation}%
For each $\beta \in (0,\infty )$ and compact $K$ in $\mathbb{X}$, the left
side of \eqref{eq:16} can be bounded by the sum of the following two terms:
\begin{equation*}
T_{1}=\sup_{n}\int_{([0,T]\times K^{c})\cap \{|g_{n}-1|>\beta
\}}|h(s,v)||g_{n}(s,v)-1|\nu (dv)ds,
\end{equation*}%
and
\begin{equation*}
T_{2}=\sup_{n}\int_{([0,T]\times K^{c})\cap \{|g_{n}-1|\leq \beta
\}}|h(s,v)||g_{n}(s,v)-1|\nu (dv)ds.
\end{equation*}%
Consider $T_{1}$ first. Then for every $L\in (0,\infty )$
\begin{equation*}
\begin{split}
T_{1}& \leq \sup_{n}\int_{([0,T]\times K^{c})\cap \{|g_{n}-1|>\beta \}\cap
\{|h|<1\}}|h(s,v)||g_{n}(s,v)-1|\nu (dv)ds \\
& \quad +\sup_{n}\int_{([0,T]\times K^{c})\cap \{|g_{n}-1|>\beta \}\cap
\{|h|\geq 1\}}|h(s,v)||g_{n}(s,v)-1|\nu (dv)ds \\
& \leq \sup_{n}\int_{([0,T]\times K^{c})\cap \{|g_{n}-1|>\beta \}\cap
\{|h|<1\}}|g_{n}(s,v)-1|\nu (dv)ds \\
& \quad +2\int_{([0,T]\times K^{c})\cap \{|h|\geq 1\}}e^{L|h(s,v)|}\nu
(dv)ds+\frac{1}{L}\sup_{n}\int_{\mathbb{X}_{T}}l(g_{n}(s,v))\nu (dv)ds.
\end{split}%
\end{equation*}%
where the inequality uses \eqref{ineq} twice (with $b=g_{n}$ and $b=1$).
Using inequality (b) of Remark \ref{rmk:l}, the first term on the right side
above can be bounded by
\begin{equation*}
c_{1}(\beta )\sup_{n}\int_{\mathbb{X}_{T}}l(g_{n}(s,v))\nu (dv)ds\leq
c_{1}(\beta )N.
\end{equation*}%
Therefore,
\begin{equation*}
T_{1}\leq c_{1}(\beta )N+2\int_{([0,T]\times K^{c})\cap \{|h|\geq
1\}}e^{L|h(s,v)|}\nu (dv)ds+\frac{1}{L}N.
\end{equation*}

Now choose $\beta$ sufficiently large so that $c_1(\beta)N\le \epsilon/6$, $%
L $ be sufficiently large so that $N/L \le \epsilon/6$. Note that from %
\eqref{eq:15}, $\nu_T\{|h|\ge 1\}<\infty$ and so by \eqref{eq:23}, $%
\int_{|h|\ge 1}e^{L|h(s,v)|}\nu_T(dvds)<\infty$. Thus we can find a compact
set $K_1\subset\mathbb{X}$ such that
\begin{equation*}
2\int_{([0,T]\times K_1^c)\cap \{|h|\ge 1\}}e^{L|h(s,v)|}\nu_T(dv ds)\le
\epsilon/6.
\end{equation*}

With $\beta $ chosen as above, consider now the term $T_{2}$. We have, using
the Cauchy-Schwartz Inequality and inequality (c) of Remark \ref{rmk:l}, for
every compact $K$,
\begin{equation*}
\begin{split}
T_{2}^{2}\leq & \int_{[0,T]\times K^{c}}|h(s,v)|^{2}\nu (dv)ds\times
c_{2}(\beta )\sup_{n}\int_{\mathbb{X}_{T}}l(g_{n}(s,v))\nu (dv)ds \\
\leq & \int_{[0,T]\times K^{c}}|h(s,v)|^{2}\nu (dv)ds\times c_{2}(\beta )N.
\end{split}%
\end{equation*}%
By \eqref{eq:15}, we can choose a compact set $K_{2}$, such that $T_{2}\leq
\epsilon /2$ with $K$ replaced by $K_{2}$. Thus by taking $K=K_{1}\cup K_{2}$%
, we have on combining the above estimates that $T_{1}+T_{2}\leq \epsilon $.
This proves \eqref{eq:16}.

In order to prove \eqref{eq:17}, it now suffices to show that, for every
compact $K\subset \mathbb{X}$,
\begin{equation}
\int_{\lbrack 0,T]\times K}h(s,v)(g_{n}(s,v)-1)\nu _{T}(dvds)\rightarrow
\int_{\lbrack 0,T]\times K}h(s,v)(g(s,v)-1)\nu _{T}(dvds).  \label{eq:18}
\end{equation}%
Fix a compact $K\subset \mathbb{X}$. From \eqref{eq:15}, we have that $%
\int_{[0,T]\times K}|h(s,v)|\nu _{T}(dvds)<\infty $. Thus to prove %
\eqref{eq:18}, it suffices to argue
\begin{equation}
\int_{\lbrack 0,T]\times K}h(s,v)g_{n}(s,v)\nu _{T}(dvds)\rightarrow
\int_{\lbrack 0,T]\times K}h(s,v)g(s,v)\nu _{T}(dvds).  \label{eq:18a}
\end{equation}%
When $h$ is bounded, (\ref{eq:18a}) can be established using Lemma 2.8 in
\cite{bodu98}. For completeness we include the proof in Appendix. For
general $h$ (which may not be bounded), it is enough to show
\begin{equation}
\sup_{n}\int_{[0,T]\times K}|h(s,v)|1_{\left\{ |h|\geq M\right\}
}g_{n}(s,v)\nu _{T}(dvds)\rightarrow 0,  \label{eq:19}
\end{equation}%
as $M\rightarrow \infty $. We have
\begin{equation*}
\begin{split}
\sup_{n}& \int_{[0,T]\times K}|h(s,v)|1_{\left\{ |h|\geq M\right\}
}g_{n}(s,v)\nu _{T}(dvds) \\
& \leq \sup_{n}\int_{([0,T]\times K)\cap \{|h|\geq M\}}e^{L|h(s,v)|}\nu
(dv)ds+\frac{1}{L}\sup_{n}\int_{\mathbb{X}_{T}}l(g_{n}(s,v))\nu (dv)ds \\
& \leq \int_{([0,T]\times K)\cap \{|h|\geq M\}}e^{L|h(s,v)|}\nu (dv)ds+\frac{%
1}{L}N.
\end{split}%
\end{equation*}%
Given $\epsilon >0$, we can choose $L$ large enough such that $N/L\leq
\epsilon /2$. Also, since
\begin{equation*}
\int_{\lbrack 0,T]\times K}e^{L|h(s,v)|}\nu _{T}(dvds)<\infty ,
\end{equation*}%
we can choose $M_{0}$ large enough such that $\int_{([0,T]\times K)\cap
\{|h|\geq M\}}e^{L|h(s,v)|}\nu (dv)ds\leq \epsilon /2$, for all $M\geq M_{0}$%
. Thus for all $M\geq M_{0}$, $\sup_{n}\int_{[0,T]\times
K}|h(s,v)|1_{|h|\geq M}g_{n}(s,v)\nu _{T}(dvds)\leq \epsilon $. Since $%
\epsilon >0$ is arbitrary, \eqref{eq:19} follows. This proves the result.
\end{proof}

We now proceed to verify the first part of Condition \ref{Cond:LDP2}. Recall
the map $\mathcal{G}^0$ defined in \eqref{eq:g0}.

\begin{proposition}
\label{prop:g1} Fix $N \in \mathbb{N}$, and let $g_n, g \in S^N$ be such
that $g_n \rightarrow g$ as $n \rightarrow \infty$. Then
\begin{equation*}
\mathcal{G}^0\left(\nu_T^{g_n}\right)\rightarrow\mathcal{G}%
^0\left(\nu_T^{g}\right).
\end{equation*}
\end{proposition}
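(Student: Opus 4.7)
My plan is to prove pre-compactness of $\{\tilde X^n\}_{n\ge 1}$ with $\tilde X^n:=\mathcal G^0(\nu_T^{g_n})$ in $C([0,T]:\Phi_{-p_1})$, identify every subsequential limit as the unique solution $\tilde X^g$ of \eqref{eq:g01} via Theorem \ref{thm:existuniq}, and then infer convergence of the full sequence from the standard subsequence argument. Pre-compactness follows exactly as in the existence portion of the proof of Theorem \ref{thm:existuniq}: by that theorem, $\sup_n\sup_{t\in[0,T]}\|\tilde X^n_t\|_{-p}\le\kappa_N<\infty$, and for $\phi\in\Phi$ and $0\le s\le t\le T$, testing \eqref{eq:g01} against $\phi$ and using Condition \ref{assump:sde}(c) together with the pointwise bound $\|G(r,u,v)\|_{-p}\le\|G(r,v)\|_{0,-p}(1+\|u\|_{-p})$ gives
\begin{equation*}
|\tilde X^n_t[\phi]-\tilde X^n_s[\phi]|\le C\|\phi\|_q(t-s)+C\|\phi\|_p\int_s^t\!\int_{\mathbb X}\|G(r,v)\|_{0,-p}|g_n(r,v)-1|\nu(dv)dr,
\end{equation*}
with $C=C(\kappa_N,K)$. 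The right-hand side is $o(1)$ as $t-s\to 0$ uniformly in $n$ by \eqref{eq:1481} of Lemma \ref{lem:Gg}. Together with the uniform bound in $\Phi_{-p}$ and the Hilbert--Schmidt embedding $\Phi_{-p}\hookrightarrow \Phi_{-p_1}$, Theorem 2.5.2 of \cite{Kallianpur95} yields pre-compactness in $C([0,T]:\Phi_{-p_1})$.

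Let $\tilde X^{n_k}\to \tilde Y$ in $C([0,T]:\Phi_{-p_1})$ along some subsequence; the limit inherits $\sup_t\|\tilde Y_t\|_{-p}\le \kappa_N$. I identify $\tilde Y$ as a solution of \eqref{eq:g01} with control $g$ by passing to the limit, for each $\phi\in\Phi$ and $t\in[0,T]$, in the scalar identity
\begin{equation*}
\tilde X^{n_k}_t[\phi]=X_0[\phi]+\int_0^t A(s,\tilde X^{n_k}_s)[\phi]ds+\int_0^t\!\int_{\mathbb X}G(s,\tilde X^{n_k}_s,v)[\phi](g_{n_k}-1)\nu(dv)ds.
\end{equation*}
The drift integral passes via the continuity of $A(s,\cdot)$ and dominated convergence, exactly as in the existence part of Theorem \ref{thm:existuniq} (cf.\ Lemma 6.1.6 and Theorem 6.2.2 of \cite{Kallianpur95}). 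For the $G$-integral I write its difference from the putative limit as
\begin{equation*}
\int_0^t\!\int_{\mathbb X}[G(s,\tilde X^{n_k}_s,v)-G(s,\tilde Y_s,v)][\phi](g_{n_k}-1)\nu(dv)ds+\int_0^t\!\int_{\mathbb X}G(s,\tilde Y_s,v)[\phi](g_{n_k}-g)\nu(dv)ds.
\end{equation*}
The first summand is bounded by $\|\phi\|_q\sup_s\|\tilde X^{n_k}_s-\tilde Y_s\|_{-q}\cdot \int_{\mathbb X_T}\|G(s,v)\|_{1,-q}|g_{n_k}-1|\nu_T(dvds)$; by Remark \ref{rmk:Gg2} the integral is uniformly bounded in $k$, and the supremum vanishes by the same compact-embedding machinery underlying the existence proof (passing from convergence in $\Phi_{-p_1}$ to convergence in $\Phi_{-q}$). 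The second summand is precisely the setting of Lemma \ref{lem:gng} applied to $h(s,v):=G(s,\tilde Y_s,v)[\phi]\mathbf{1}_{[0,t]}(s)$; hypotheses \eqref{eq:15} and \eqref{eq:23} follow from $|h(s,v)|\le\|\phi\|_p\|G(s,v)\|_{0,-p}(1+\kappa_N)$ together with Condition \ref{assump:sde}(c), Condition \ref{Assump:expint2}, and Remark \ref{rmk:exp}.

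Uniqueness in Theorem \ref{thm:existuniq} then forces $\tilde Y=\tilde X^g=\mathcal G^0(\nu_T^g)$; since every subsequence of $\{\tilde X^n\}$ admits a further convergent subsequence with the same limit, the full sequence converges in $C([0,T]:\Phi_{-p_1})$. The main obstacle I anticipate is the first summand of the $G$-decomposition above, where the relatively weak $\Phi_{-p_1}$ convergence of $\tilde X^{n_k}$ must be upgraded to the strong $\Phi_{-q}$ convergence required by the Lipschitz norm $\|G(\cdot)\|_{1,-q}$. This is resolved by the same choice of indices and compact-embedding arguments that power the existence portion of Theorem \ref{thm:existuniq}; the remaining estimates are routine given Lemma \ref{lem:Gg}, Remark \ref{rmk:Gg2}, and Lemma \ref{lem:gng}.
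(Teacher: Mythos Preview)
Your proposal is correct and follows essentially the same route as the paper: uniform $\Phi_{-p}$ bounds from Theorem~\ref{thm:existuniq}, equicontinuity of $\tilde X^n[\phi]$ via \eqref{eq:1481}, pre-compactness in $C([0,T]:\Phi_{-p_1})$ by Theorem~2.5.2 of \cite{Kallianpur95}, and identification of the limit by splitting the $G$-integral exactly as you do, invoking Remark~\ref{rmk:Gg2} for the first piece and Lemma~\ref{lem:gng} for the second. The issue you flag---upgrading $\Phi_{-p_1}$ convergence to $\Phi_{-q}$ convergence---is handled in the paper in the same implicit way; it is resolved by noting that Condition~\ref{assump:sde} holds for every $p\ge p_0$, so one may take $q\ge p_1$, whence $\|\cdot\|_{-q}\le\|\cdot\|_{-p_1}$ and the upgrade is automatic.
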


\begin{proof}
Let $\tilde{X}^{n}=\mathcal{G}^{0}\left( \nu _{T}^{g_{n}}\right) $. By
Theorem \ref{thm:existuniq}, there exists a constant $\tilde{\kappa}\in
(0,\infty )$ such that
\begin{equation}
\sup_{n}\sup_{t\in \lbrack 0,T]}||\tilde{X}_{t}^{n}||_{-p}\leq \tilde{\kappa}%
.  \label{eq:13}
\end{equation}%
Using similar arguments as in the proof of Theorem \ref{thm:existuniq} (cf. %
\eqref{eq:21} and \eqref{eq:22}), we have, for any $\phi \in \Phi $,%
\begin{equation*}
\sup_{n}\sup_{t\in \lbrack 0,T]}|\tilde{X}_{t}^{n}[\phi ]|<\infty .
\end{equation*}%
Also,
\begin{equation*}
\begin{split}
|\tilde{X}_{t}^{n}[\phi ]-\tilde{X}_{s}^{n}[\phi ]|& \leq ||\phi ||_{q}\sqrt{%
K}\sqrt{1+\tilde{\kappa}}(t-s) \\
& \quad +||\phi ||_{p}(1+\sqrt{\tilde{\kappa}})\int_{s}^{t}\int_{\mathbb{X}%
}||G(r,v)||_{0,-p}|g_n(r,v)-1|\nu (dv)dr.
\end{split}%
\end{equation*}%
Using \eqref{eq:1481} in Lemma \ref{lem:Gg} we now have that
\begin{equation*}
\lim_{\delta \rightarrow 0}\sup_{n}\sup_{|t-s|\leq \delta }|\tilde{X}%
_{t}^{n}[\phi ]-\tilde{X}_{s}^{n}[\phi ]|=0.
\end{equation*}%
This proves that the family $\{\tilde{X}_{t}^{n}[\phi ]\}$ is pre-compact in
$C([0,T]:\mathbb{R})$ for every $\phi \in \Phi $.

Combining this with \eqref{eq:13}, we have that $\{\tilde{X}^{n}\}_{n\in
\mathbb{N}}$ is pre-compact in $C([0,T]:\Phi _{-p_{1}})$ (see Theorem 2.5.2
in \cite{Kallianpur95}). Let $\tilde{X}$ be any limit point. An application
of the dominated convergence theorem shows that, along the convergent
subsequence,
\begin{equation}
\int_{0}^{t}A(s,\tilde{X}_{s}^{n})[\phi ]ds\rightarrow \int_{0}^{t}A(s,%
\tilde{X}_{s})[\phi ]ds  \label{eq:24}
\end{equation}%
as $n\rightarrow \infty $. Furthermore, using the convergence of $\tilde{X}%
^{n}$ to $\tilde{X}$, Condition \ref{assump:sde} (d) and \eqref{eq:g11g}, we
have that
\begin{equation}
\int_{0}^{t}\int_{\mathbb{X}}G(s,\tilde{X}_{s}^{n},v)[\phi
](g_{n}(s,v)-1)\nu (dv)ds-\int_{0}^{t}\int_{\mathbb{X}}G(s,\tilde{X}%
_{s},v)[\phi ](g_{n}(s,v)-1)\nu (dv)ds\rightarrow 0.  \label{eq:25}
\end{equation}%
Here we have used the inequality
\begin{equation*}
\left |G(s,\tilde{X}_{s}^{n},v)[\phi] - G(s,\tilde{X}_{s},v)[\phi ] \right |
\le ||G(s,v)||_{1,-q} \sup_{t \in [0,T]} ||\tilde{X}_{s}^{n} - \tilde{X}%
_{s}||_{-q}
\end{equation*}
along with inequality \eqref{eq:g11g} in Remark \ref{rmk:Gg2}.

Also, from \eqref{eq:13}, we have that for some $\kappa _{1}\in (0,\infty )$
\begin{equation*}
|G(s,\tilde{X}_{s},v)[\phi ]|\leq \kappa _{1}||G(s,v)||_{0,-p},\quad \forall
(s,v)\in \mathbb{X}_{T}.
\end{equation*}%
Combining this with Condition \ref{assump:sde} (c) and Remark \ref{rmk:exp},
we now get from Lemma \ref{lem:gng} that, as $n\rightarrow \infty $,
\begin{equation}
\int_{0}^{t}\int_{\mathbb{X}}G(s,\tilde{X}_{s},v)[\phi ](g_{n}(s,v)-1)\nu
(dv)ds\rightarrow \int_{0}^{t}\int_{\mathbb{X}}G(s,\tilde{X}_{s},v)[\phi
](g(s,v)-1)\nu (dv)ds.  \label{eq:26}
\end{equation}%
Combining \eqref{eq:24}, \eqref{eq:25} and \eqref{eq:26} we now see that $%
\tilde{X}$ must satisfy the integral equation \eqref{eq:g01} for all $\phi
\in \Phi $. In view of unique solvability of \eqref{eq:g01} (Theorem \ref%
{thm:existuniq}), it now follows that $\tilde{X}=\mathcal{G}^{0}\left( \nu
_{T}^{g}\right) $. The result follows.
\end{proof}

We now proceed to the second part of Condition \ref{Cond:LDP2}. As noted in
Theorem \ref{Thm:LDP2}, it suffices to verify this condition with $\mathcal{U%
}^M$ replaced with $\tilde{\mathcal{U}}^M$.

Recall from the beginning of this section that $X^{\epsilon }=\mathcal{G}%
^{\epsilon }(\epsilon N^{\epsilon ^{-1}})$ is the strong solution of %
\eqref{eq:sdeg} with initial value $X_{0}$ on $(\bar{\mathbb{M}},\mathcal{B}(%
\bar{\mathbb{M}}),\bar{\mathbb{P}},\{\bar{\mathcal{F}}_{t}\})$. Let $\varphi
_{\epsilon }\in \tilde{\mathcal{U}}^{M}$, define $\psi _{\epsilon
}=1/\varphi _{\epsilon }$, and recall the definitions of $\bar{N}$ and $\bar{%
\nu}_{T}$ from Section \ref{Sec:PRM}. Then it is easy to check (see Theorem
III.3.24 of \cite{JacShi}, see also Lemma 2.3 of \cite{BDM09}) that
\begin{equation*}
\mathcal{E}_{t}^{\epsilon }(\psi _{\epsilon })=\exp \left\{ \int_{\mathbb{(}%
0,t]\times \mathbb{X}\times \lbrack 0,\epsilon ^{-1}]}\log (\psi _{\epsilon
}(s,x))\bar{N}(ds\,dx\,dr)+\int_{\mathbb{(}0,t]\times \mathbb{X}\times
\lbrack 0,\epsilon ^{-1}]}\left( -\psi _{\epsilon }(s,x)+1\right) \bar{\nu}%
_{T}(ds\,dx\,dr)\right\}
\end{equation*}%
is an $\left\{ \mathcal{\bar{F}}_{t}\right\} $-martingale. Consequently
\begin{equation*}
\mathbb{Q}_{T}^{\epsilon }(G)=\int_{G}\mathcal{E}_{t}^{\epsilon }(\psi
_{\epsilon })d\mathbb{\bar{P}},\quad \mbox{ for }G\in \mathcal{B(}\mathbb{%
\bar{M}}\mathcal{\mathbb{\mathcal{)}}}
\end{equation*}%
defines a probability measure on $\mathbb{\bar{M}}$, and furthermore $%
\mathbb{\bar{P}}$ and $\mathbb{Q}_{T}^{\epsilon }$ are mutually absolutely
continuous. Also it can be verified that under $\mathbb{Q}_{T}^{\epsilon }$,
$\epsilon N^{\epsilon ^{-1}\varphi _{\epsilon }}$ has the same law as that
of $\epsilon N^{\epsilon ^{-1}}$ under $\mathbb{\bar{P}}$. Thus it follows
that $\tilde{X}^{\epsilon }=\mathcal{G}^{\epsilon }(\epsilon N^{\epsilon
^{-1}\varphi _{\epsilon }})$ is the unique solution of the following
controlled stochastic differential equation:
\begin{equation}
\tilde{X}_{t}^{\epsilon }=X_{0}+\int_{0}^{t}A(s,\tilde{X}_{s}^{\epsilon
})ds+\int_{0}^{t}\int_{\mathbb{X}}G(s,\tilde{X}_{s-}^{\epsilon },v)\left(
\epsilon N^{\epsilon ^{-1}\varphi _{\epsilon }}(dsdv)-\nu (dv)ds\right) .
\label{eq:sdec}
\end{equation}

\begin{proposition}
\label{prop:g2} %\textbf{Part 2 of Condition \ref{Cond:LDP2}:}
Fix $M\in \mathbb{N}$. Let $\varphi _{\epsilon },\varphi \in \tilde{\mathcal{%
U}}^{M}$ be such that $\varphi _{\epsilon }$ converges in distribution to $%
\varphi $, under $\mathbb{\bar{P}}$, as $\epsilon \rightarrow 0$. Then $%
\mathcal{G}^{\epsilon }(\epsilon N^{\epsilon ^{-1}\varphi _{\epsilon
}})\Rightarrow \mathcal{G}^{0}\left( \nu ^{\varphi }\right) .$
\end{proposition}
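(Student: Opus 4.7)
Starting from the controlled SDE \eqref{eq:sdec}, I would first decompose the noise into a predictable drift and a martingale piece by writing
\begin{equation*}
\tilde{X}_{t}^{\epsilon }[\phi ]=X_{0}[\phi ]+\int_{0}^{t}A(s,\tilde{X}_{s}^{\epsilon })[\phi ]\,ds+\int_{0}^{t}\!\int_{\mathbb{X}}G(s,\tilde{X}_{s}^{\epsilon },v)[\phi ](\varphi _{\epsilon }(s,v)-1)\nu (dv)ds+M_{t}^{\epsilon }[\phi ],
\end{equation*}
where $M_{t}^{\epsilon }[\phi ]=\int_{0}^{t}\!\int_{\mathbb{X}}G(s,\tilde{X}_{s-}^{\epsilon },v)[\phi ]\,(\epsilon N^{\epsilon ^{-1}\varphi _{\epsilon }}(dsdv)-\varphi _{\epsilon }(s,v)\nu (dv)ds)$ is a martingale with predictable quadratic variation $\epsilon \int_{0}^{t}\!\int_{\mathbb{X}}G(s,\tilde{X}_{s}^{\epsilon },v)[\phi ]^{2}\varphi _{\epsilon }(s,v)\nu (dv)ds$. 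The overall strategy is the standard one: establish (i) a uniform moment bound, (ii) that $M^{\epsilon}$ is negligible, (iii) tightness of $\{\tilde{X}^{\epsilon}\}$ in $D([0,T]:\Phi_{-p_{1}})$, and (iv) identification of any subsequential limit as the unique solution of \eqref{eq:g01} with control $\varphi$.

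For step (i), I would apply Itô's formula to $\|\tilde{X}_{t}^{\epsilon }\|_{-p}^{2}$ (arguing via finite-dimensional projections, as in the proof of Theorem \ref{thm:existuniq}), use the coercivity bound in Condition \ref{assump:sde}(b) on the drift, the growth bound in (c) on the compensated term (together with Lemma \ref{lem:Gg} applied to $\varphi_\epsilon \in S^{M}$), and the small factor $\epsilon$ absorbing the jump quadratic variation. Combined with Gronwall this yields $\sup_{\epsilon }\bar{\mathbb{E}}\sup_{t\leq T}\|\tilde{X}_{t}^{\epsilon }\|_{-p}^{2}\leq C_{M}<\infty $. For step (ii), using the $L^{2}$ isometry one gets
\begin{equation*}
\bar{\mathbb{E}}\sup_{t\leq T}|M_{t}^{\epsilon }[\phi ]|^{2}\leq C\,\epsilon \,\|\phi \|_{p}^{2}\,\bar{\mathbb{E}}\int_{0}^{T}\!\int_{\mathbb{X}}\|G(s,v)\|_{0,-p}^{2}(1+\|\tilde{X}_{s}^{\epsilon }\|_{-p})^{2}\varphi _{\epsilon }(s,v)\nu (dv)ds,
\end{equation*}
and by Lemma \ref{lem:Gg} applied with $g=\varphi_\epsilon$ together with step (i), this is $O(\epsilon )$, so $M^{\epsilon }[\phi ]\Rightarrow 0$ in $D([0,T]:\mathbb{R})$ for every $\phi \in \Phi $.

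For step (iii) I would verify a Mitoma/Kallianpur type tightness criterion (as in Theorem 2.5.2 of \cite{Kallianpur95}): combined with the uniform norm bound in (i), it suffices to show that $\{\tilde{X}^{\epsilon }[\phi ]\}$ is tight in $D([0,T]:\mathbb{R})$ for each $\phi \in \Phi $. The drift term with $A$ is controlled by Condition \ref{assump:sde}(c) and (i), the drift term with $G$ is controlled by \eqref{eq:1481} of Lemma \ref{lem:Gg} uniformly in $\varphi_\epsilon\in S^{M}$ (giving equicontinuity of that piece), and the martingale piece is negligible by step (ii); Aldous's criterion then yields the required tightness. For step (iv), by Skorokhod representation I may assume $(\tilde{X}^{\epsilon },\varphi _{\epsilon })\to (\tilde{X},\varphi )$ almost surely along a subsequence, with $\tilde{X}\in D([0,T]:\Phi _{-p_{1}})$. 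The $A$ term passes by continuity and dominated convergence. For the controlled drift I split
\begin{equation*}
\int_{0}^{t}\!\!\int_{\mathbb{X}}G(s,\tilde{X}_{s}^{\epsilon },v)[\phi ](\varphi _{\epsilon }-1)\nu (dv)ds=\int_{0}^{t}\!\!\int_{\mathbb{X}}\bigl[G(s,\tilde{X}_{s}^{\epsilon },v)-G(s,\tilde{X}_{s},v)\bigr][\phi ](\varphi _{\epsilon }-1)\nu (dv)ds+\int_{0}^{t}\!\!\int_{\mathbb{X}}G(s,\tilde{X}_{s},v)[\phi ](\varphi _{\epsilon }-1)\nu (dv)ds,
\end{equation*}
and handle the first summand using $\|\cdot\|_{1,-q}$, Remark \ref{rmk:Gg2}, and the uniform convergence of $\tilde{X}^{\epsilon }\to \tilde{X}$ in $\Phi _{-q}$, and the second summand by Lemma \ref{lem:gng} with $h(s,v)=G(s,\tilde{X}_{s},v)[\phi ]$, whose hypotheses are met thanks to the bound $|G(s,\tilde{X}_{s},v)[\phi ]|\leq \|\phi\|_{p}(1+\sup_{s}\|\tilde{X}_{s}\|_{-p})\|G(s,v)\|_{0,-p}$ combined with Condition \ref{assump:sde}(c) and Remark \ref{rmk:exp}. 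Thus $\tilde{X}$ solves \eqref{eq:g01} with control $\varphi $, and uniqueness in Theorem \ref{thm:existuniq} forces $\tilde{X}=\mathcal{G}^{0}(\nu _{T}^{\varphi })$, which proves the proposition.

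The main obstacle I anticipate is step (iii): establishing tightness in the dual of a nuclear space when the solutions have jumps. The uniform bound from (i) is only an $\Phi_{-p}$-norm bound in expectation, whereas tightness is required in the weaker Skorokhod space $D([0,T]:\Phi_{-p_{1}})$; the Hilbert-Schmidt embedding $\Phi_{-p}\hookrightarrow\Phi_{-p_{1}}$ is what allows one to upgrade coordinatewise tightness (Aldous/Mitoma) to tightness in $\Phi_{-p_{1}}$, and some care is needed since the compensated jump piece forces the paths to live genuinely in $D$ rather than $C$. Once this is in hand, the identification step is a routine combination of Propositions analogous to \ref{prop:g1} with the vanishing of $M^{\epsilon}$ and uniqueness from Theorem \ref{thm:existuniq}.
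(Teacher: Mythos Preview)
Your proposal is correct and follows essentially the same route as the paper: It\^{o}'s formula plus coercivity and Lemma \ref{lem:Gg} for the uniform $\Phi_{-p}$ moment bound, vanishing of the martingale piece via its quadratic variation, coordinatewise tightness upgraded to $D([0,T]:\Phi_{-p_1})$ through the Hilbert--Schmidt embedding (Theorem 2.5.2 of \cite{Kallianpur95}), and identification of the limit via Skorokhod, the splitting into a $\|G\|_{1,-q}$-controlled difference and a Lemma \ref{lem:gng} term, followed by uniqueness from Theorem \ref{thm:existuniq}. The only points where the paper is slightly more explicit are (a) in step (i) the Gronwall argument does not close directly because the It\^{o} formula leaves martingale terms on the right-hand side, and the paper handles these with the Burkholder--Davis--Gundy inequality and a bootstrap (your sketch glosses over this but the idea is right), and (b) for tightness of $\tilde{X}^{\epsilon}[\phi]$ the paper invokes a semimartingale tightness criterion (Theorem 6.1.1 of \cite{Kallianpur95}) rather than Aldous, though the two are interchangeable here.
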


%%%%%%%%%%%%%%%%%%%%%%%%%%%%%%%%%%%%%%%%%%%%%%%%%%%%%%%%%%%%%%%%%%%%%%%%%%%%%%

\begin{proof}
If $\tilde{X}^{\epsilon }=\mathcal{G}^{\epsilon }(\epsilon N^{\epsilon
^{-1}\varphi _{\epsilon }})$, then as just noted, $\tilde{X}^{\epsilon }$ is
the unique solution of \eqref{eq:sdec}. We now show that the family $\{%
\tilde{X}^{\epsilon }\}_{\epsilon >0}$ of $D([0,T]:\Phi _{-p_{1}})$ valued
random variables is tight.

We begin by showing that for some $\epsilon _{0}\in (0,\infty )$
\begin{equation}
\sup_{0<\epsilon <\epsilon _{0}}\mathbb{E}\sup_{0\leq t\leq T}||\tilde{X}%
_{t}^{\epsilon }||_{-p}^{2}<\infty .  \label{eq:11a}
\end{equation}%
Recall that $\theta _{p}$ is defined by $\theta _{p}(\phi _{j}^{-p})=\phi
_{j}^{p}$ for the CONS $\{\phi _{j}^{-p},j\in \mathbb{Z}\}$. By It\^{o}'s
formula,
\begin{equation}
\begin{split}
||\tilde{X}_{t}^{\epsilon }||_{-p}^{2}&= ||{X_{0}}||_{-p}^{2}+2%
\int_{0}^{t}A(s,\tilde{X}_{s}^{\epsilon })[\theta _{p}\tilde{X}%
_{s}^{\epsilon }]ds+2\int_{0}^{t}\int_{\mathbb{X}}\langle G(s,\tilde{X}%
_{s}^{\epsilon },v),\tilde{X}_{s}^{\epsilon }\rangle _{-p}(\varphi
_{\epsilon }-1)\nu (dv)ds \\
& \quad +\int_{0}^{t}\int_{\mathbb{X}}\left( ||\epsilon G(s,\tilde{X}%
_{s-}^{\epsilon },v)||_{-p}^{2}+2\langle \epsilon G(s,\tilde{X}%
_{s-}^{\epsilon },v),\tilde{X}_{s-}^{\epsilon }\rangle _{-p}\right) \left(
N^{\epsilon ^{-1}\varphi _{\epsilon }}(dsdv)-\epsilon ^{-1}\varphi
_{\epsilon }\nu (dv)ds\right) \\
& \quad +\epsilon \int_{0}^{t}\int_{\mathbb{X}}||G(s,\tilde{X}_{s}^{\epsilon
},v)||_{-p}^{2}\varphi _{\epsilon }\nu (dv)ds.
\end{split}
\label{eq:ito}
\end{equation}%
For completeness we include the proof of \eqref{eq:ito} in the appendix.

For the second term in \eqref{eq:ito}, we have by Condition \ref{assump:sde}
(b) that
\begin{equation}
2\int_{0}^{t}A(s,\tilde{X}_{s}^{\epsilon })[\theta _{p}\tilde{X}%
_{s}^{\epsilon }]ds\leq K\int_{0}^{t}(1+||\tilde{X}_{s}^{\epsilon
}||_{-p}^{2})ds.
\end{equation}%
Also, using $a+a^{2}\leq 1+2a^{2}$ for $a\geq 0$
\begin{equation*}
\begin{split}
\left\vert \int_{0}^{t}\int_{\mathbb{X}}\langle G(s,\tilde{X}_{s}^{\epsilon
},v),\tilde{X}_{s}^{\epsilon }\rangle _{-p}(\varphi _{\epsilon }-1)\nu
(dv)ds\right\vert & \leq \int_{0}^{t}\int_{\mathbb{X}}\frac{||G(s,\tilde{X}%
_{s}^{\epsilon },v)||_{-p}}{1+||\tilde{X}_{s}^{\epsilon }||_{-p}}(1+||\tilde{%
X}_{s}^{\epsilon }||_{-p})||\tilde{X}_{s}^{\epsilon }||_{-p}|\varphi
_{\epsilon }-1|\nu (dv)ds \\
& \leq \int_{0}^{t}(1+2||\tilde{X}_{s}^{\epsilon }||_{-p}^{2})\left( \int_{%
\mathbb{X}}||G(s,v)||_{0,-p}|\varphi _{\epsilon }-1|\nu (dv)\right) ds \\
& \leq L_{1}+2\int_{0}^{t}||\tilde{X}_{s}^{\epsilon }||_{-p}^{2}\left( \int_{%
\mathbb{X}}||G(s,v)||_{0,-p}|\varphi _{\epsilon }-1|\nu (dv)\right) ds,
\end{split}%
\end{equation*}%
where $L_{1}=\sup_{\varphi \in S^{M}}\int_{\mathbb{X}_{T}}||G(s,v)||_{0,-p}|%
\varphi -1|\nu (dv)ds<\infty $, from \eqref{eq:G01g}.

For the last term in \eqref{eq:ito}, we have
\begin{equation*}
\begin{split}
\epsilon \int_0^t \int_\mathbb{X}|| G(s, \tilde{X}_{s}^\epsilon,v)||^2_{-p}%
\varphi_\epsilon\nu(dv)ds &=\epsilon\int_0^t \int_\mathbb{X} \frac{||G(s,
\tilde{X}_{s}^\epsilon, v)||^2_{-p}}{(1+||\tilde{X}_s^\epsilon||_{-p})^2}
(1+||\tilde{X}_s^\epsilon||_{-p})^2\varphi_\epsilon\nu(dv)ds \\
&\le 2\epsilon\int_0^t (1+||\tilde{X}_s^\epsilon||^2_{-p})\left( \int_%
\mathbb{X} ||G(s, v)||^2_{0,-p}\varphi_\epsilon\nu(dv)\right)ds \\
&\le 2\epsilon L_2+2\epsilon\int_0^t ||\tilde{X}_s^\epsilon||^2_{-p}\left(
\int_\mathbb{X} ||G(s, v)||^2_{0,-p}\varphi_\epsilon\nu(dv)\right)ds,
\end{split}%
\end{equation*}
where $L_2=\sup_{\varphi\in S^M}\int_{\mathbb{X}_T} ||G(s,
v)||^2_{0,-p}\varphi\nu(dv)ds<\infty$, from \eqref{eq:G02g}.

We split the martingale term as $M_{t}=M_{t}^{1}+M_{t}^{2}$, where
\begin{equation*}
M_{t}^{1}=\int_{0}^{t}\int_{\mathbb{X}}||\epsilon G(s,\tilde{X}%
_{s-}^{\epsilon },v)||_{-p}^{2}\left( N^{\epsilon ^{-1}\varphi _{\epsilon
}}(dsdv)-\epsilon ^{-1}\varphi _{\epsilon }\nu (dv)ds\right) ,
\end{equation*}%
and
\begin{equation*}
M_{t}^{2}=\int_{0}^{t}\int_{\mathbb{X}}2\langle \epsilon G(s,\tilde{X}%
_{s-}^{\epsilon },v),\tilde{X}_{s-}^{\epsilon }\rangle _{-p}\left(
N^{\epsilon ^{-1}\varphi _{\epsilon }}(dsdv)-\epsilon ^{-1}\varphi
_{\epsilon }\nu (dv)ds\right) .
\end{equation*}%
We now use the following Gronwall inequality:
\begin{equation*}
\mbox{If }\eta \mbox{ and $\psi \geq 0$ satisfy }\eta (s)\leq
a+\int_{0}^{s}\eta (r)\psi (r)dr\mbox{ for all }s\in \lbrack 0,t]\mbox{,
then }\eta (t)\leq ae^{\int_{0}^{t}\psi (s)ds}.
\end{equation*}%
Using this inequality, the above estimates, and Lemma \ref{lem:Gg}, we have
that for some constants $L_{3},L_{4}\in (1,\infty )$,
\begin{equation}
\sup_{0\leq s\leq t}||\tilde{X}_{s}^{\epsilon }||_{-p}^{2}\leq L_{3}\left(
L_{4}+\sup_{0\leq s\leq t}|M_{s}^{1}|+\sup_{0\leq s\leq t}|M_{s}^{2}|\right)
,  \label{eq:10}
\end{equation}%
for all $\epsilon \in (0,1)$ and $t\in \lbrack 0,T]$.

For the term $M_{t}^{1}$, we have, for $\epsilon \in (0,1)$
\begin{equation}
\begin{split}
\mathbb{E}\sup_{0\leq s\leq T}|M_{s}^{1}|& \leq \mathbb{E}\left\vert
\int_{0}^{T}\int_{\mathbb{X}}||\epsilon G(s,\tilde{X}_{s-}^{\epsilon
},v)||_{-p}^{2}N^{\epsilon ^{-1}\varphi _{\epsilon }}(dsdv)\right\vert +%
\mathbb{E}\left\vert \int_{0}^{T}\int_{\mathbb{X}}||\epsilon G(s,\tilde{X}%
_{s-}^{\epsilon },v)||_{-p}^{2}\epsilon ^{-1}\varphi _{\epsilon }\nu
(dv)ds\right\vert \\
& \leq 2\mathbb{E}\int_{0}^{T}\int_{\mathbb{X}}||\epsilon G(s,\tilde{X}%
_{s}^{\epsilon },v)||_{-p}^{2}\epsilon ^{-1}\varphi _{\epsilon }\nu (dv)ds \\
& \leq 4\epsilon \mathbb{E}\int_{0}^{T}(1+||\tilde{X}_{s}^{\epsilon
}||_{-p}^{2})\left( \int_{\mathbb{X}}||G(s,v)||_{0,-p}^{2}\varphi _{\epsilon
}\nu (dv)\right) ds \\
& \leq 4\epsilon \mathbb{E}\int_{\mathbb{X}_{T}}||G(s,v)||_{0,-p}^{2}\varphi
_{\epsilon }\nu (dv)ds+4\epsilon \mathbb{E}\sup_{0\leq s\leq T}||\tilde{X}%
_{s}^{\epsilon }||_{-p}^{2}\int_{\mathbb{X}_{T}}||G(s,v)||_{0,-p}^{2}\varphi
_{\epsilon }\nu (dv)ds \\
& \leq 4\epsilon L_{2}(1+\mathbb{E}\sup_{0\leq s\leq T}||\tilde{X}%
_{s}^{\epsilon }||_{-p}^{2}).
\end{split}
\label{eq:mt1}
\end{equation}%
Next consider the term $M_{t}^{2}$. From the Burkholder-Davis-Gundy
inequality, we have that
\begin{equation}
\begin{split}
\mathbb{E}\sup_{0\leq s\leq T}|M_{s}^{2}|& \leq 4\mathbb{E}[M^{2}]_{T}^{1/2}
\\
& \leq 4\mathbb{E}\left\{ \int_{0}^{T}\int_{\mathbb{X}}4\epsilon ^{2}\langle
G(s,\tilde{X}_{s-}^{\epsilon },v),\tilde{X}_{s-}^{\epsilon }\rangle
_{-p}^{2}N^{\epsilon ^{-1}\varphi _{\epsilon }}(dsdv)\right\} ^{1/2} \\
& \leq 4\mathbb{E}\left\{ \int_{0}^{T}\int_{\mathbb{X}}4\epsilon ^{2}||G(s,%
\tilde{X}_{s-}^{\epsilon },v)||_{-p}^{2}||\tilde{X}_{s-}^{\epsilon
}||_{-p}^{2}N^{\epsilon ^{-1}\varphi _{\epsilon }}(dsdv)\right\} ^{1/2} \\
& \leq 8\mathbb{E}\left\{ \sup_{0\leq s\leq T}||\tilde{X}_{s}^{\epsilon
}||_{-p}^{2}\int_{0}^{T}\int_{\mathbb{X}}\epsilon ^{2}||G(s,\tilde{X}%
_{s-}^{\epsilon },v)||_{-p}^{2}N^{\epsilon ^{-1}\varphi _{\epsilon
}}(dsdv)\right\} ^{1/2} \\
& \leq \frac{1}{8L_{3}}\mathbb{E}\sup_{0\leq s\leq T}||\tilde{X}%
_{s}^{\epsilon }||_{-p}^{2}+128\epsilon ^{2}L_{3}\mathbb{E}\left(
\int_{0}^{T}\int_{\mathbb{X}}||G(s,\tilde{X}_{s-}^{\epsilon
},v)||_{-p}^{2}N^{\epsilon ^{-1}\varphi _{\epsilon }}(dsdv)\right) \\
& =\frac{1}{8L_{3}}\mathbb{E}\sup_{0\leq s\leq T}||\tilde{X}_{s}^{\epsilon
}||_{-p}^{2}+128\epsilon L_{3}\mathbb{E}\left( \int_{0}^{T}\int_{\mathbb{X}%
}||G(s,\tilde{X}_{s}^{\epsilon },v)||_{-p}^{2}\varphi _{\epsilon }\nu
(dv)ds\right) \\
& \leq \frac{1}{8L_{3}}\mathbb{E}\sup_{0\leq s\leq T}||\tilde{X}%
_{s}^{\epsilon }||_{-p}^{2}+256\epsilon L_{2}L_{3}(1+\mathbb{E}\sup_{0\leq
s\leq T}||\tilde{X}_{s}^{\epsilon }||_{-p}^{2}).
\end{split}
\label{eq:mt2}
\end{equation}%
For the fifth inequality, we have used the inequality $\sqrt{ab}\leq \frac{a%
}{2}+\frac{b}{2}$ with $a=\frac{1}{32L_{3}}\sup_{0\leq s\leq T}||\tilde{X}%
_{s}^{\epsilon }||_{-p}^{2}$ and $b=32L_{3}\epsilon ^{2}\int_{0}^{T}\int_{%
\mathbb{X}}||G(s,\tilde{X}_{s-}^{\epsilon },v)||_{-p}^{2}N^{\epsilon
^{-1}\varphi _{\epsilon }}(dsdv)$. Combining \eqref{eq:10}, \eqref{eq:mt1}
and \eqref{eq:mt2} we now have
\begin{equation*}
\left(\mathbb{E}\sup_{0\leq s\leq T}||\tilde{X}_{s}^{\epsilon
}||_{-p}^{2}\right) \left(1 - 4\epsilon L_2L_3 - 256\epsilon L_2L_3^2 -\frac{%
1}{8}\right) \le L_3L_4 + 4L_2L_3 + 256L_2L_3^2.
\end{equation*}
Choose $\epsilon _{0}$ small enough so that $\max \{4\epsilon_0 L_2L_3,
256\epsilon _{0}L_{2}L_{3}^{2}\} \leq \frac{1}{8}$. Then for $\epsilon \leq
\epsilon _{0}$, we have that
\begin{equation*}
\mathbb{E}\sup_{0\leq s\leq T}||\tilde{X}_{s}^{\epsilon }||_{-p}^{2}\leq
\frac{8}{5}(L_{3}L_{4}+4L_{2}L_{3}+256L_{2}L_{3}^{2}).
\end{equation*}%
This proves \eqref{eq:11a}.

In view of the estimate in \eqref{eq:11a}, to prove tightness of $\{\tilde{X}%
^{\epsilon }\}_{\epsilon \leq \epsilon _{0}}$ in $D([0,T]:\Phi _{-p_{1}})$,
it suffices to show that for all $\phi \in \Phi $, $\{\tilde{X}^{\epsilon
}[\phi ]\}_{\epsilon \leq \epsilon _{0}}$ is tight in $D([0,T]:\mathbb{R})$.
For the rest of the proof we will only consider $\epsilon \leq \epsilon _{0}$%
, however we will suppress $\epsilon _{0}$ from the notation. Fix $\phi \in
\Phi $. Let
\begin{equation*}
C_{t}^{\epsilon }=\int_{0}^{t}A(s,\tilde{X}_{s}^{\epsilon })[\phi
]ds+\int_{0}^{t}\int_{\mathbb{X}}G(s,\tilde{X}_{s}^{\epsilon },v)[\phi
](\varphi _{\epsilon }-1)\nu (dv)ds
\end{equation*}%
and
\begin{equation*}
M_{t}^{\epsilon }=\epsilon \int_{0}^{t}\int_{\mathbb{X}}G(s,\tilde{X}%
_{s-}^{\epsilon },v)[\phi ]\tilde{N}^{\epsilon ^{-1}\varphi _{\epsilon
}}(dsdv).
\end{equation*}%
To argue tightness of $C^{\epsilon }$ in $C([0,T]:\mathbb{R})$, it suffices
to show (cf. Lemma 6.1.2 of \cite{Kallianpur95}) that for all $\tau >0$,
there exists $\delta =\delta _{\tau }>0$ such that
\begin{equation}
\sup_{0\leq \epsilon \leq \epsilon _{0}}\mathbb{P}\left( \sup_{0<\beta
-\alpha <\delta }|C_{\alpha }^{\epsilon }-C_{\beta }^{\epsilon }|>\tau
\right) <\tau .  \label{eq:28}
\end{equation}

Fix $\tau >0$. Then for arbitrary $\delta >0$,
\begin{equation}
\begin{split}
\sup_{\epsilon }& \,\mathbb{P}\left( \sup_{0<\beta -\alpha <\delta
}|C_{\alpha }^{\epsilon }-C_{\beta }^{\epsilon }|>\tau \right) \\
& =\sup_{\epsilon }\mathbb{P}\left( \sup_{0<\beta -\alpha <\delta
}\left\vert \int_{\alpha }^{\beta }A(s,\tilde{X}_{s}^{\epsilon })[\phi
]ds+\int_{\alpha }^{\beta }\int_{\mathbb{X}}G(s,\tilde{X}_{s}^{\epsilon
},v)[\phi ](\varphi _{\epsilon }-1)\nu (dv)ds\right\vert >\tau \right) \\
& \leq \sup_{\epsilon }\mathbb{P}\left( \sup_{0<\beta -\alpha <\delta
}\left\vert \int_{\alpha }^{\beta }A(s,\tilde{X}_{s}^{\epsilon })[\phi
]ds\right\vert >\frac{\tau }{2}\right) \\
& \quad +\sup_{\epsilon }\mathbb{P}\left( \sup_{0<\beta -\alpha <\delta
}\left\vert \int_{\alpha }^{\beta }\int_{\mathbb{X}}G(s,\tilde{X}%
_{s}^{\epsilon },v)[\phi ](\varphi _{\epsilon }-1)\nu (dv)ds\right\vert >%
\frac{\tau }{2}\right) \\
& \leq \sup_{\epsilon }\frac{4}{\tau ^{2}}\mathbb{E}\left( \delta
^{2}\sup_{0\leq s\leq T}\left\vert A(s,\tilde{X}_{s}^{\epsilon })[\phi
]\right\vert ^{2}\right) +\sup_{\epsilon }\frac{2}{\tau }\mathbb{E}\left(
\sup_{0<\beta -\alpha <\delta }\left\vert \int_{\alpha }^{\beta }\int_{%
\mathbb{X}}G(s,\tilde{X}_{s}^{\epsilon },v)[\phi ](\varphi _{\epsilon
}-1)\nu (dv)ds\right\vert \right) .
\end{split}
\label{eq:11}
\end{equation}%
From \eqref{eq:11a} and Condition \ref{assump:sde} (c), it follows that
\begin{equation*}
\sup_{\epsilon }\mathbb{E}\left( \sup_{0\leq s\leq T}\left\vert A(s,\tilde{X}%
_{s}^{\epsilon })[\phi ]\right\vert ^{2}\right) <\infty .
\end{equation*}%
Thus we can find $\delta _{1}>0$ such that for all $\delta \leq \delta _{1}$%
, the first term on the last line of \eqref{eq:11} is bounded by $\tau /2$.

Now we consider the second term:
\begin{equation*}
\begin{split}
& \left\vert \int_{[\alpha ,\beta ]\times \mathbb{X}}G(s,\tilde{X}%
_{s}^{\epsilon },v)[\phi ](\varphi _{\epsilon }-1)\nu (dv)ds\right\vert \\
& \quad \leq ||\phi ||_{p}\left( 1+\sup_{0\leq s\leq T}||\tilde{X}%
_{s}^{\epsilon }||_{-p}\right) \int_{[\alpha ,\beta ]\times \mathbb{X}%
}||G(s,v)||_{0,-p}|\varphi _{\epsilon }-1|\nu (dv)ds\\
& \quad \leq ||\phi ||_{p}\left( 1+\sup_{0\leq s\leq T}||\tilde{X}%
_{s}^{\epsilon }||_{-p}\right) \sup_{g\in S^M}\sup_{|t-s|\le\delta}\int_{[s ,t ]\times \mathbb{X}%
}||G(s,v)||_{0,-p}|g-1|\nu (dv)ds.
\end{split}%
\end{equation*}%
Then from \eqref{eq:1481} in Lemma \ref{lem:Gg} and \eqref{eq:11a}, we can find $\delta _{2}>0$ such that for all $\delta \leq \delta _{2}$%
, the second term on the last line of \eqref{eq:11} is bounded by $\tau /2$. By taking $\delta=\min(\delta_1,\delta_2)$, \eqref{eq:28} holds and the tightness
of $\{C^{\epsilon }\}_{\epsilon \leq \epsilon _{0}}$ follows.

Next consider $M^{\epsilon }$. We have
\begin{equation}
\begin{split}
\mathbb{E}\left\langle M^{\epsilon }\right\rangle _{T}& =\epsilon \mathbb{E}%
\int_{0}^{T}\int_{\mathbb{X}}(G(s,\tilde{X}_{s}^{\epsilon },v)[\phi
])^{2}\varphi _{\epsilon }\nu (dv)ds \\
& \leq 2\epsilon ||\phi ||_{p}(1+\mathbb{E}\sup_{0\leq s\leq T}||\tilde{X}%
_{s}^{\epsilon }||_{-p}^{2})\sup_{\varphi \in S^{M}}\int_{\mathbb{X}%
_{T}}||G(s,v)||_{0,-p}^{2}\varphi \nu (dv)ds.
\end{split}
\label{eq:}
\end{equation}%
Using Lemma \ref{lem:Gg}, we have $\mathbb{E}\sup_{0\leq s\leq
T}\left\langle M^{\epsilon }\right\rangle _{s}$ goes to 0 as $\epsilon
\rightarrow 0$. Then by Theorem 6.1.1 in \cite{Kallianpur95}, for any $\phi
\in \Phi $, the sequence of semimartingales $\tilde{X}_{t}^{\epsilon }[\phi
]=X_{0}[\phi ]+C_{t}^{\epsilon }+M_{t}^{\epsilon }$ is tight in $D([0,T]:%
\mathbb{R})$. It then follows from \eqref{eq:11a} and Theorem 2.5.2 in \cite%
{Kallianpur95} that $\{\tilde{X}^{\epsilon }\}_{\epsilon \leq \epsilon _{0}}$
is tight in $D([0,T]:\Phi _{-p_{1}})$.

Choose a subsequence along which $(\tilde{X}^{\epsilon },\varphi _{\epsilon
},M^{\epsilon })$ converges in distribution to $(\tilde{X},\tilde{\varphi}%
,0) $. Without loss of generality, we can assume the convergence is almost
sure by using the Skorokhod representation theorem. Note that $\tilde{X}%
^{\epsilon }$ satisfies the following integral equation
\begin{equation*}
\tilde{X}_{t}^{\epsilon }[\phi ]=X_{0}[\phi ]+\int_{0}^{t}A(s,\tilde{X}%
_{s}^{\epsilon })[\phi ]ds+\int_{0}^{t}\int_{\mathbb{X}}G(s,\tilde{X}%
_{s}^{\epsilon },v)[\phi ](\varphi _{\epsilon }-1)\nu (dv)ds+M^{\epsilon }.
\end{equation*}%
Along the lines of Theorem \ref{thm:existuniq} and Proposition \ref{prop:g1}
(see (\ref{eq:24}) -- (\ref{eq:26})), we see that $\tilde{X}$ must solve
\begin{equation*}
\tilde{X}_{t}[\phi ]=X_{0}[\phi ]+\int_{0}^{t}A(s,\tilde{X}_{s})[\phi
]ds+\int_{0}^{t}\int_{\mathbb{X}}G(s,\tilde{X}_{s},v)[\phi ](\tilde{\varphi}%
-1)\nu (dv)ds.
\end{equation*}%
The unique solvability of the above integral equation gives that $\tilde{X}=%
\mathcal{G}^{0}\left( \nu ^{\tilde{\varphi}}\right) $, thus we have proved
part 2 of Condition \ref{Cond:LDP2}, i.e., $\mathcal{G}^{\epsilon }(\epsilon
N^{\epsilon ^{-1}\varphi _{\epsilon }})\Rightarrow \mathcal{G}^{0}\left( \nu
^{\varphi }\right) $.
\end{proof}

We are now ready to prove the main theorem.

\begin{proof}[Proof of Theorem \protect\ref{Thm: LDPg}]
Using Propositions \ref{prop:g1} and \ref{prop:g2}, Theorem \ref{Thm: LDPg}
is an immediate consequence of Theorem \ref{thm:ldp}.
\end{proof}

%%%%%%%%%%%%%%%%%%%%%%%%%%%%%%%%%%%%%%%%%%%%%%%%%%%%%%%%%%%%%%%%%%
%%%%%%%%%%%%%%%%%%%%%%%%%%%%%%%%%%%%%%%%%%%%%%%%%%%%%%%%%%%%%%%%%%
%%%%%%%%%%%%%%%%%%%%%%%%%%%%%%%%%%%%%%%%%%%%%%%%%%%%%%%%%%%%%%%%%%

\section{A one dimensional model for spread of a chemical agent}

\label{sec:eg} In the hydrology literature (see \cite{Tamai1976} for
example), partial differential equations of the following type are often
used to model the spread of a pollutant in a reservoir, river or air:
\begin{equation}
D\triangle \phi -V\cdot \nabla \phi -\alpha \phi +Q=0.  \label{ins258}
\end{equation}%
Here $\phi (x)$ represents the water quality or pollutant concentration at
location $x$; $\triangle $ is the Laplacian operator modeling the diffusion
of the chemical; $D$ is the coefficient capturing the strength of the
diffusion effect. The term $V\cdot \nabla \phi $ models the convection term,
here $\nabla $ is the gradient operator and $V$ is the velocity vector. The
scalar $\alpha \geq 0$ can be interpreted as the rate of dissipation of the
chemical and $Q\geq 0$ is the \textquotedblleft load\textquotedblright\ or
pollutant issued from outside. Pollutants take various forms, such as
nutrients (e.g., runoff fertilizer), microbiological, and chemical (e.g.,
pesticides).

The deterministic equation (\ref{ins258}) models the steady state density
profile of the pollutant and does not take into account any temporal or
stochastic variability. A dynamic stochastic model for pollutant spread
described through a stochastic partial differential equation (SPDE) driven
by a PRM was studied in \cite{Kallianpur95}. We begin by describing this
model in a one dimensional setting, where it describes the evolution of a
pollutant deposited at different sites along a reservoir. Our goal is to
study probabilities of deviations from the nominal behavior by establishing
a suitable large deviation principle.

\subsection{Dynamic SPDE Model}

\label{sec:model} The model considered here describes the spread of a
chemical agent which is released by several different sources along a
one-dimensional reservoir. Suppose that there are $r$ such sources located
at different sites $\kappa _{1},\ldots ,\kappa _{r}\in \lbrack 0,l]$, where
the interval $[0,l]$ represents the reservoir. These sources release
pollutants according to independent Poisson streams $N_{i}(t)$, with rate $%
f_{i}$, $i=1,...,r$, and with random magnitudes $A_{i}^{j}(\omega )$, $j\in
\mathbb{N}$, $i=1,...,r$, which are mutually independent with magnitudes in
the $i^{th}$ stream having common distribution $F_{i}(da)$.

Formally, the model describing the evolution of concentration is written as
follows:
\begin{equation}
\begin{split}
\frac{\partial }{\partial t}u(t,x)& =D\frac{\partial ^{2}}{\partial x^{2}}%
u(t,x)-V\frac{\partial }{\partial x}u(t,x)-\alpha u(t,x) \\
& \quad +\sum_{i=1}^{r}\sum_{j}A_{i}^{j}(\omega )\delta _{\kappa
_{i}}(x)1_{\left\{ t=\tau _{i}^{j}(\omega )\right\} }
\end{split}
\label{eq:model}
\end{equation}%
where $\tau _{i}^{j}(\omega )$, $j\in \mathbb{N}$ are the jump times of $%
N_{i}$, and $\delta _{a}(x)$ is the Dirac delta measure with unit mass at $a$%
. The equation is considered with a Neumann boundary condition on $[0,l]$. A
Neumann boundary condition is reasonable as a model for a reservoir, though
one would expect in this case that at the boundary the component of the
velocity orthogonal to the boundary would be zero, which in the current
setting would mean $V=0$. However, the example is for illustrative purposes
only, and the domain, boundary conditions and differential operator may be
made much more general, though one will not always obtain expressions as
explicit as those given below.

The equation (\ref{eq:model}) can be regarded as a stochastic partial
differential equation driven by a Poisson random measure. The Poisson random
measure $N$ driving the equation is a random measure on the space $\mathbb{R}%
_{+}\times \mathbb{X}$ with $\mathbb{X}=\mathbb{J}\times \mathbb{R}_{+}$ and
$\mathbb{J}=\{1,2,...,r\}$, and can be represented as
\begin{equation*}
N([0,t]\times A\times
B)=\sum_{i=1}^{r}1_{A}(i)\sum_{j=1}^{N_{i}(t)}1_{B}(A_{i}^{j}(\omega
)),\quad t\geq 0,\ A\subseteq \mathbb{J},\ B\in \mathcal{B}(\mathbb{R}_{+}).
\end{equation*}%
The intensity measure of $N$ is given by $\nu _{0}=\lambda \otimes \nu $,
where $\lambda $ is the Lebesgue measure on $\mathbb{R}_{+}$ and
\begin{equation}
\nu (A\times B)=\sum_{i=1}^{r}1_{A}(i)f_{i}F_{i}(B),\quad \ A\subseteq
\mathbb{J},\ B\in \mathcal{B}(\mathbb{R}_{+}).  \label{intensity}
\end{equation}

We now introduce a natural CHNS associated with equation (\ref{eq:model})
(see \cite{Kallianpur95}). Let $\rho \in \mathcal{M}_{F}[0,l]$ be defined by
\begin{equation*}
\rho (A)=\int_{A}e^{-2cx}dx;\quad A\in \mathcal{B}[0,l],
\end{equation*}%
where $c=\frac{V}{2D}$. Let $H=L^{2}([0,l],\rho )$. Then $\{\phi
_{j}\}_{j\in \mathbb{N}_{0}}$ defined below is a complete orthonormal system
on $H$ of eigen-functions of the operator $L$ defined by
\begin{equation}
L\phi =D\frac{\partial ^{2}}{\partial x^{2}}\phi -V\frac{\partial }{\partial
x}\phi ,  \label{eq:L}
\end{equation}%
with Neumann boundary $\phi ^{\prime }(0)=\phi ^{\prime }(l)=0$.
\begin{equation*}
\phi _{0}(x)=\sqrt{\frac{2c}{1-e^{-2cl}}},\quad \phi _{j}(x)=\sqrt{\frac{2}{l%
}}e^{cx}\sin \left( \frac{j\pi }{l}x+\alpha _{j}\right) ;
\end{equation*}%
\begin{equation*}
\alpha _{j}=\tan ^{-1}(-\frac{j\pi }{lc}),\quad j=1,2,\ldots .
\end{equation*}%
The corresponding eigenvalues, denoted by $\{-\lambda _{j}\}_{j\in \mathbb{N}%
_{0}}$, are given as
\begin{equation*}
\lambda _{0}=0,\quad \lambda _{j}=D\left( c^{2}+\left( \frac{j\pi }{l}%
\right) ^{2}\right) .
\end{equation*}%
For $\phi \in H$ and $n\in \mathbb{Z}$ let
\begin{equation*}
||\phi ||_{n}^{2}=\sum_{j=0}^{\infty }\langle \phi ,\phi _{j}\rangle
^{2}(1+\lambda _{j})^{2n},
\end{equation*}%
where $\langle \phi ,\psi \rangle $ is the inner product on $H$. Define
\begin{equation}
\Phi =\{\phi \in H:||\phi ||_{n}<\infty ,\forall n\in \mathbb{Z}\}
\label{eq:defofPhi}
\end{equation}%
and let $\Phi _{n}$ be the completion of $\Phi $ with respect to the norm $%
||\cdot ||_{n}$. Note that $\Phi _{0}=H$, and it can be checked that $\Phi $
is a CHNS.

With $\Phi $ defined by (\ref{eq:defofPhi}), the equation in \eqref{eq:model}
can be written rigorously as a SPDE in $\Phi ^{\prime }$ as follows. Define $%
A:\Phi ^{\prime }\rightarrow \Phi ^{\prime }$ and $G:\mathbb{X}\rightarrow
\Phi ^{\prime }$ by
\begin{equation}
A(u)[\phi ]=u[L\phi ]-\alpha u[\phi ]+\sum_{i=1}^{r}a_{i}f_{i}\phi (\kappa
_{i})\rho (\kappa _{i}),\quad \phi \in \Phi ,u\in \Phi ^{\prime }
\label{eq:A}
\end{equation}%
\begin{equation}
G(i,a)[\phi ]=a\phi (\kappa _{i})\rho (\kappa _{i}),\;(i,a)\in \mathbb{J}%
\times \mathbb{R}_{+},\ \phi \in \Phi  \label{eq:G}
\end{equation}%
where $a_{i}=\int_{\mathbb{R}_{+}}aF_{i}(da)$ and $L$ is defined as in (\ref%
{eq:L}).

Let $(\Omega ,\mathscr{F},\mathbb{P},\{\mathcal{F}_{t}\})$ be a filtered
probability space on which is given a Poison random measure $N$ with
intensity measure $\lambda \otimes \nu $, with $\nu $ as in \eqref{intensity}%
, such that $N([0,t]\times A\times B)-t\nu (A\times B)$ is a $\{\mathcal{F}%
_{t}\}$ martingale for all $A\subseteq \mathbb{J},\ B\in \mathcal{B}(\mathbb{%
R}_{+})$ satisfying $\nu (A\times B)<\infty $, and let $u_{0}$ be a $%
\mathcal{F}_{0}$-measurable random variable with values in $\Phi ^{\prime }$%
. In order to formulate the SPDE, we will need square integrability
assumptions on $F_{i}$, but with large deviations questions in mind, we
impose the following stronger integrability requirement.

\begin{condition}
\label{Assump:expint} There exists $\delta >0$ such that
\begin{equation*}
\int_0^{\infty} e^{\delta a^2}F_i(da) < \infty, \ \ \ \ \forall i = 1,...,r.
\end{equation*}
\end{condition}

Let $\tilde{N}(ds dv)$ be the compensated random measure of $N$, i.e.
\begin{equation*}
\tilde{N}([0,t] \times B)= N([0,t] \times B) - t \nu(B),
\end{equation*}
$\forall B \in \mathcal{B}(\mathbb{X})$ with $\nu(B) < \infty$. Note that
the operator $-L$ on $H$ is positive definite and self-adjoint, and thus the
following definition of a solution of \eqref{eq:model} is natural.

\begin{definition}
Fix $p\geq 0$, suppose that $\mathbb{E}||u_{0}||_{-p}^{2}<\infty $. A
stochastic process $\{u_{t}\}_{t\in \lbrack 0,\infty )}$ defined on $(\Omega
,\mathscr{F},\mathbb{P})$ is said to be a $\Phi _{-p}$-valued strong
solution to the SPDE \eqref{eq:model} with initial value $u_{0}$, if

(a) $u_{t}$ is a $\Phi _{-p}$-valued ${\mathcal{F}}_{t}$-measurable random
variable, for all $t\in \lbrack 0,\infty )$;

(b) $u\in D([0,\infty): \Phi_{-p})$ a.s.;

(c) For all $t \in [0,\infty)$ and a.e. $\omega$
\begin{equation*}
u_t[\phi] = u_0[\phi] + \int_0^t A(u_s)[\phi] ds + \int_0^t \int_\mathbb{X}
G(v)[\phi] \tilde{N}(ds dv), \quad \forall \phi \in \Phi.
\end{equation*}
\end{definition}

We are interested in the behavior of the solution when the Poisson noise is
small, namely the case where $\nu _{0}$ is replaced with $\epsilon ^{-1}\nu
_{0}$ and $G$ with $\epsilon G$, and $\epsilon $ is a small parameter. More
precisely, the goal is to study the large deviation behavior of $%
\{u_{t}^{\epsilon }\}_{0\leq t\leq T}$ in $D([0,T]:\Phi _{-p})$, as $%
\epsilon \rightarrow 0$, where $u^{\epsilon }$ solves the integral equation
\begin{equation*}
u_{t}^{\epsilon }=u_{0}+\int_{0}^{t}A(u_{s}^{\epsilon })ds+\epsilon
\int_{0}^{t}\int_{\mathbb{X}}G(v)\tilde{N}^{\epsilon ^{-1}}(dsdv),
\end{equation*}%
where $\tilde{N}^{\epsilon ^{-1}}$ is the compensated version of ${N}%
^{\epsilon ^{-1}}$ as introduced below \eqref{eq:sdeg} and ${N}^{\epsilon
^{-1}}$ is constructed using $\bar{N}$ as in \eqref{Eqn: control}. Here $%
\bar{N}$, as in Section \ref{prelim}, is once more a Poisson random measure
on $[0,T]\times \mathbb{X}\times \lbrack 0,\infty )$ with intensity $\bar{\nu%
}_{T}=\lambda _{T}\otimes \nu \otimes \lambda _{\infty }$. In particular, we
are assuming (without loss of generality) that the filtered probability
space $(\Omega ,\mathscr{F},\mathbb{P},\{\mathcal{F}_{t}\})$ introduced
below \eqref{eq:G} is large enough to support the Poisson random measure $%
\bar{N}$ that has the usual martingale properties with respect to the
filtration $\{\mathcal{F}_{t}\}$.

It can be easily checked that the functions $A$ and $G$ satisfy Condition %
\ref{assump:sde} with $p_{0}=1$. Moreover in the setting of this section,
for any $p_{1}\geq 2$, the canonical injection from $\Phi _{-1}$ to $\Phi
_{-p_{1}}$ is Hilbert-Schmidt. Recall the space $\mathbb{M}$ and $\mathbb{S}$
from Section \ref{prelim}. Recall that $\nu _{T}^{g}(dsdv)=g(s,v)\nu (dv)ds$.

For $p_{1}\geq 2$ fixed, define the map $\mathcal{G}^{0}:\mathbb{M}%
\rightarrow \mathbb{U}=D([0,T]:\Phi _{-p_{1}})$ as follows.
\begin{equation*}
\mathcal{G}^{0}(\nu _{T}^{g})=\tilde{u}^{g}\mbox{ for }g\in \mathbb{S}\mbox{%
, with }\tilde{u}^{g}\mbox{ given by (\ref{geqn})}.
\end{equation*}%
%
%
%
%
%
%
%
%
%
%where $\tilde{u}^{g}$ is the solution to the following integral equation
\begin{equation}
\tilde{u}_{t}^{g}[\phi ]=u_{0}[\phi ]+\int_{0}^{t}A(\tilde{u}_{s}^{g})[\phi
]ds+\int_{0}^{t}\int_{\mathbb{X}}G(v)[\phi ](g(s,v)-1)\nu _{T}(dvds),\quad
\forall \phi \in \Phi .  \label{geqn}
\end{equation}%
From Theorem \ref{thm:existuniq}, we have that there is a unique $\tilde{u}%
^{g}\in D([0,T]:\Phi _{-p_{1}})$ that solves (\ref{geqn}).

Define $I$ through (\ref{Eqn: I2}), where $L_T$ is as in \eqref{Ltdef}. It
can be checked that Conditions \ref{Assump:expint2} and \ref{Assump:explip}
are satisfied under Condition \ref{Assump:expint}. Thus, as an immediate
consequence of Theorem \ref{Thm: LDPg}, we have the following large
deviation principle for $u^{\epsilon}$.

\begin{theorem}
\label{Thm: LDP2} Suppose Condition \ref{Assump:expint} holds. Fix $p_1 \ge
2 $. Then $I$ is a rate function on $\mathbb{U}$ and the family $\{{u}%
^{\epsilon}\}_{\epsilon>0}$ satisfies a large deviation principle, as $%
\epsilon\rightarrow 0 $, on $D([0,T]:\Phi_{-p_1})$, with rate function $I$.
\end{theorem}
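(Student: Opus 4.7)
The plan is to derive the result as a direct corollary of Theorem~\ref{Thm: LDPg} by verifying that the specific $A$ and $G$ of \eqref{eq:A}--\eqref{eq:G} satisfy Condition~\ref{assump:sde} with $p_0 = 1$, and that the exponential integrability Conditions~\ref{Assump:expint2} and~\ref{Assump:explip} hold under Condition~\ref{Assump:expint}. For each $p \geq p_0 = 1$ I will take $q = p+1$. The Hilbert--Schmidt injection $\Phi_{-1} \hookrightarrow \Phi_{-p_1}$ required by Theorem~\ref{Thm: strsol} for $p_1 \geq 2$ is immediate, since its squared Hilbert--Schmidt norm equals $\sum_j (1+\lambda_j)^{2(1-p_1)}$, which is summable because $\lambda_j = D(c^2 + (j\pi/l)^2) \asymp j^2$.

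\textbf{Verification of Condition~\ref{assump:sde}.} Two facts do most of the work. First, $L$ is diagonal with eigenvalues $-\lambda_j$ on the CONS $\{\phi_j\}$, so $L$ continuously maps $\Phi_{n+1} \to \Phi_n$ for every $n \in \mathbb{Z}$ and hence $u \circ L$ lies in $\Phi_{-(p+1)}$ whenever $u \in \Phi_{-p}$. Second, for $q \geq 1$ the point-evaluation $\phi \mapsto \phi(\kappa_i)$ defines a bounded linear functional on $\Phi_q$, because $|\phi_j(\kappa_i)| \leq \sqrt{2/l}\,e^{cl}$ and $\sum_j (1+\lambda_j)^{-2q} < \infty$. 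These two facts control every term in $A$ and yield continuity (a) and the growth estimate (c) (the bound on $\int_{\mathbb{X}}\|G(v)\|_{-p}^2 \nu(dv)$ reduces to $C_p^2 \sum_i f_i \int_0^\infty a^2 F_i(da)$, finite by Condition~\ref{Assump:expint}). Monotonicity (d) is essentially free: the first inner product reduces, after a spectral computation, to the non-positive sum $-\sum_j (c_{1,j}-c_{2,j})^2 (\alpha + \lambda_j)(1+\lambda_j)^{-2q}$, while the second term vanishes identically since $G$ is independent of $u$. The only mildly technical step is the coercivity (b): using $\theta_p \phi_j = (1+\lambda_j)^{-2p}\phi_j$, one finds that $2A(\phi)[\theta_p \phi]$ splits into a non-positive diffusion sum, a non-positive $-2\alpha \|\phi\|_{-p}^2$ term, and a point-source contribution $2\sum_i a_i f_i (\theta_p\phi)(\kappa_i)\rho(\kappa_i)$, which Cauchy--Schwarz combined with the spectral summability above bounds by $C(1+\|\phi\|_{-p}^2)$.

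\textbf{Verification of the exponential integrability.} The point-evaluation bound gives $\|G(i,a)\|_{-p} \leq C_p\,a$ for $p \geq 1$, and since $G$ does not depend on $u$ we also have $\|G(s,v)\|_{0,-p} \leq C_p\, a$. Therefore for any $E \in \mathcal{B}([0,T]\times \mathbb{X})$ with $\nu_T(E) < \infty$,
\begin{equation*}
\int_E e^{\delta_1 \|G(s,v)\|_{0,-p}^2}\, \nu_T(ds\,dv) \leq T \sum_{i=1}^{r} f_i \int_0^\infty e^{\delta_1 C_p^2 a^2}\, F_i(da) < \infty
\end{equation*}
for any $\delta_1 \leq \delta/C_p^2$ by Condition~\ref{Assump:expint}, verifying Condition~\ref{Assump:expint2}. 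Condition~\ref{Assump:explip} is immediate: since $G$ is $u$-independent, $\|G(s,v)\|_{1,-q} \equiv 0$, so the exponential integrand is constantly $1$ and the integral equals $\nu_T(E) < \infty$ for any $\delta > 0$. With Condition~\ref{assump:sde} and Conditions~\ref{Assump:expint2}--\ref{Assump:explip} all in force, Theorem~\ref{Thm: LDPg} yields the claimed large deviation principle on $D([0,T]:\Phi_{-p_1})$ with rate function $I$.
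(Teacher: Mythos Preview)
Your proposal is correct and follows exactly the route the paper takes: verify that $A$, $G$ satisfy Condition~\ref{assump:sde} with $p_0=1$, that Conditions~\ref{Assump:expint2} and~\ref{Assump:explip} follow from Condition~\ref{Assump:expint}, and that the injection $\Phi_{-1}\hookrightarrow\Phi_{-p_1}$ is Hilbert--Schmidt for $p_1\ge 2$, then invoke Theorem~\ref{Thm: LDPg}. The paper itself merely asserts that these verifications ``can be easily checked'' without detail, so your sketch of the spectral computations (the point-evaluation bound for $\delta_{\kappa_i}\in\Phi_{-q}$, the diagonal action of $L$, and the observation that $\|G(s,v)\|_{1,-q}\equiv 0$) in fact supplies more than the paper does.
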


Note that as $\epsilon \rightarrow 0$, $u^{\epsilon }$ converges in $%
D([0,T]:\Phi _{-p_{1}})$ to $u^{0}$ that solves the integral equation
\begin{equation*}
u_{t}^{0}[\phi ]=u_{0}[\phi ]+\int_{0}^{t}A({u}_{s}^{0})[\phi ]ds,\quad
\forall \phi \in \Phi .
\end{equation*}%
In particular, if $u_{0}$ solves the stationary equation
\begin{equation}
D\frac{d^{2}u_{0}(x)}{dx^{2}}-V\frac{du_{0}(x)}{dx}-\alpha u_{0}(x)+Q(x)=0,
\label{eq:steady}
\end{equation}%
where
\begin{equation*}
Q(x)=\sum_{i=1}^{r}a_{i}f_{i}\delta _{\kappa _{i}}(x),
\end{equation*}%
then $u_{t}^{0}=u_{0}$ for all $t\geq 0$. It is easily verified that there
is a unique $\Phi _{-1}$ valued solution to \eqref{eq:steady} which can be
explicitly characterized by
\begin{equation*}
u_{0}[\phi ]=\sum_{i=1}^{r}\sum_{j=1}^{\infty }\frac{a_{i}f_{i}}{\alpha
+\lambda _{j}}\langle \phi ,\phi _{j}\rangle \phi _{j}(\kappa _{i})\rho
(\kappa _{i}),\quad \forall \phi \in \Phi .
\end{equation*}%
Equation (\ref{eq:steady}) should be compared with the stationary
deterministic equation (\ref{ins258}). This equation, which appears in \cite%
{Tamai1976}, has been proposed as a model for the long time concentration
profile when there is a constant rate, non random, source term given by $%
Q(x) $. Theorem \ref{Thm: LDP2} provides probabilities of large deviations
from the steady state nominal values given by (\ref{ins258}) when the true
source term is a small noise perturbation of $Q$. We remark that in this
case the solution to the integral equation for $\tilde{u}^{g}$ (i.e., %
\eqref{geqn}) that is used to define the map $\mathcal{G}^{0}$ appearing in
the formula for the rate function, can be explicitly written as
\begin{equation*}
\tilde{u}_{t}^{g}[\phi ]=\sum_{j=0}^{\infty }\sum_{i=1}^{r}e^{-(\alpha
+\lambda _{j})t}f_{i}\phi _{j}(\kappa _{i})\rho (\kappa _{i})\langle \phi
,\phi _{j}\rangle \left[ \int_{0}^{t}\int_{0}^{\infty }e^{(\alpha +\lambda
_{j})s}ag(s,i,a)F_{i}(da)ds+\frac{a_{i}}{\alpha +\lambda _{j}}\right] .
\end{equation*}

\section{Appendix}

\subsection{Proof of compactness of $S^{N}$}

\begin{lemma}
\label{lem:cmpt} For every $N \in \mathbb{N}$, $\{\nu^g_T: g \in S^N\}$ is a
compact subset of $\mathbb{M}$.
\end{lemma}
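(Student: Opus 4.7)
The plan is to prove sequential compactness: given an arbitrary sequence $\{g_n\} \subset S^N$, I extract a subsequence along which $\nu_T^{g_n}$ converges in $\mathbb{M}$ to some $\nu_T^g$ with $g \in S^N$. Since the topology on $\mathbb{M} = \mathcal{M}_{FC}(\mathbb{X}_T)$ is metrizable, sequential compactness suffices.

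First I would establish vague relative compactness of $\{\nu_T^{g_n}\}$. A family of locally finite measures on the locally compact Polish space $\mathbb{X}_T$ is vaguely relatively compact iff it is uniformly bounded on every compact set. Applying the Young-type inequality $ab \leq e^{\sigma a} + \sigma^{-1} l(b)$ from Remark 3.3 with $a = 1$, for any compact $K \subset \mathbb{X}_T$ and any $\sigma > 0$,
\begin{equation*}
\nu_T^{g_n}(K) = \int_K g_n \, d\nu_T \leq e^\sigma \nu_T(K) + \sigma^{-1} L_T(g_n) \leq e^\sigma \nu_T(K) + N/\sigma,
\end{equation*}
which is finite and independent of $n$. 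Extract a subsequence (relabeled $g_n$) along which $\nu_T^{g_n} \to \mu$ in $\mathbb{M}$.

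Next I would identify $\mu = \nu_T^g$ for some measurable $g \geq 0$. The super-linear growth of $l$ forces uniform integrability of $\{g_n\}$ on every set of finite $\nu_T$-measure: whenever $g \geq M > e$ one has $l(g) \geq g(\log M - 1)$, hence for any $E$ with $\nu_T(E) < \infty$,
\begin{equation*}
\int_E g_n \mathbf{1}_{\{g_n \geq M\}} \, d\nu_T \leq \frac{L_T(g_n)}{\log M - 1} \leq \frac{N}{\log M - 1} \to 0
\end{equation*}
uniformly in $n$ as $M \to \infty$. Together with the bound $\int_E g_n \, d\nu_T \leq e \nu_T(E) + N$, Dunford--Pettis gives a subsubsequence converging weakly in $L^1(E,\nu_T)$. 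Fix an exhausting sequence $\{E_m\}$ of sets of finite $\nu_T$-measure with $E_m \uparrow \mathbb{X}_T$ (e.g., $E_m = [0,T] \times K_m$ with $K_m$ as in the paper) and apply a standard diagonal argument to obtain one subsequence $g_{n_k}$ and a measurable $g \geq 0$ on $\mathbb{X}_T$ with $g_{n_k} \to g$ weakly in $L^1(E_m,\nu_T)$ for every $m$. For any $f \in C_c(\mathbb{X}_T)$ the support of $f$ lies in some $E_m$, so $\langle f, \nu_T^{g_{n_k}} \rangle \to \langle f, \nu_T^g \rangle$, and by uniqueness of vague limits $\mu = \nu_T^g$.

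Finally I would verify $g \in S^N$. Because $l$ is convex and nonnegative, $h \mapsto \int_{E_m} l(h) \, d\nu_T$ is weakly lower semicontinuous on $L^1(E_m,\nu_T)$, giving $\int_{E_m} l(g) \, d\nu_T \leq \liminf_k \int_{E_m} l(g_{n_k}) \, d\nu_T \leq N$; monotone convergence as $m \to \infty$ yields $L_T(g) \leq N$, so $g \in S^N$ and $\nu_T^{g_{n_k}} \to \nu_T^g \in \{\nu_T^{g} : g \in S^N\}$. The main obstacle will be the diagonal extraction coupled with the infinite total mass of $\nu_T$: the vague topology only tests against compactly supported continuous functions, so the density $g$ must be produced piece by piece on finite-measure sets and then glued together — but the super-linear coercivity of $l$ makes the requisite uniform integrability essentially automatic, and the remaining bookkeeping is routine.
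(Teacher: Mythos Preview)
Your proof is correct and takes a genuinely different route from the paper's. Both arguments begin identically, using the super-linear growth of $l$ to obtain a uniform bound on compacts and hence vague relative compactness. The divergence is in how the limit measure is shown to be absolutely continuous with respect to $\nu_T$ with density in $S^N$. The paper normalizes the restricted measures to probability measures, computes their relative entropies with respect to a normalized $\nu_T$, and invokes the lower semicontinuity of relative entropy under weak convergence to force absolute continuity of the limit; the density is then glued together and the bound $L_T(g)\le N$ is read off from the relative entropy inequality. You instead work directly at the $L^1$ level: the coercivity of $l$ yields uniform integrability on each finite-measure slice $E_m$, Dunford--Pettis gives weak $L^1$ subsequential limits, a diagonal extraction produces a single density $g$, and weak lower semicontinuity of the convex integral functional $h\mapsto\int l(h)\,d\nu_T$ gives $g\in S^N$. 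Your approach is arguably more elementary and self-contained, avoiding the normalization to probability measures and the relative entropy machinery; the paper's route, on the other hand, stays closer to the entropy structure that underlies the whole large deviations framework and reuses a standard tool (l.s.c.\ of relative entropy) rather than appealing to Dunford--Pettis.
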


\begin{proof}
The topology on $\mathbb{M}$, which was described in Section \ref{Sec:PRM},
can be metrized as follows. Consider a sequence of open sets $\left\{
O_{j},j\in \mathbb{N}\right\} $ such that $\bar{O}_{j}\subset O_{j+1}$, each
$\bar{O}_{j}$ is compact, and $\cup _{j=1}^{\infty }O_{j}=\mathbb{X}_{T}$
(cf. Theorem 9.5.21 of \cite{Roy}). Let $\phi _{j}(x)=\left[ 1-d(x,O_{j})%
\right] \vee 0$, where $d$ denotes the metric on $\mathbb{X}_{T}$. Given any
$\mu \in \mathbb{M}$, let $\mu ^{(j)}\in \mathbb{M}$ be defined by $\left[
d\mu ^{(j)}/d\mu \right] (x)=\phi _{j}(x)$. Given $\mu ,\nu \in \mathbb{M}$,
let
\begin{equation*}
\bar{d}(\mu ,\nu )=\sum_{j=1}^{\infty }2^{-j}\left\Vert \mu ^{(j)}-\nu
^{(j)}\right\Vert _{BL},
\end{equation*}%
where $\left\Vert \cdot \right\Vert _{BL}$ denotes the bounded, Lipschitz
norm on $\mathcal{M}_{F}(\mathbb{X}_{T})$:
\begin{equation*}
\left\Vert \mu ^{(j)}-\nu ^{(j)}\right\Vert _{BL}=\sup \left\{ \int_{\mathbb{%
X}_{T}}fd\mu ^{(j)}-\int_{\mathbb{X}_{T}}fd\nu ^{(j)}:\left\vert
f\right\vert _{\infty }\leq 1,\left\vert f(x)-f(y)\right\vert \leq d(x,y)%
\mbox{ for all }x,y\in \mathbb{X}_{T}\right\} .
\end{equation*}

It is straightforward to check that $\bar{d}(\mu ,\nu )$ defines a metric
under which $\mathbb{M}$ is a Polish space, and that convergence in this
metric is essentially equivalent to weak convergence on each compact subset
of $\mathbb{X}_T$. Specifically, $\bar{d}(\mu _{n},\mu )\rightarrow 0$ if
and only if for each $j\in \mathbb{N}$, $\mu _{n}^{(j)}\rightarrow \mu
^{(j)} $ in the weak topology as finite nonnegative measures, i.e., for all $%
f\in C_{b}(\mathbb{X}_T)$%
\begin{equation*}
\int_{\mathbb{X}_T}fd\mu _{n}^{(j)}\rightarrow \int_{\mathbb{X}_T}fd\mu
^{(j)}.
\end{equation*}

Let $\mu _{n}=\nu _{T}^{g_{n}}$. We first show that $\{\mu _{n}\}\subset
\mathbb{M}$ is relatively compact for any sequence $\{g_{n}\}\subset S^{N}$.
For this, by using a diagonalization method, it suffices to show that $\{\mu
_{n}^{(j)}\}\subset \mathbb{M}$ is relatively compact for every $j$. Next,
since $\mu _{n}^{(j)}$ are supported on the compact subset of $\mathbb{X}%
_{T} $ given by $K^{j}=\overline{\{x|\phi _{j}(x)\neq 0\}}$, to show $\{\mu
_{n}^{(j)}\}\subset \mathbb{M}$ is relatively compact it suffices to show $%
\sup_{n}\mu _{n}^{(j)}(\mathbb{X}_{T})<\infty $. The last property will
follow from the fact that $L_{T}(g_{n})\leq N$ for all $n$, and the
super-linear growth of $l$. Specifically, let $c\in (0,\infty )$ be such
that $z\leq c(l(z)+1)$ for all $z\in \lbrack 0,\infty )$. Then
\begin{equation*}
\sup_{n}\mu _{n}^{(j)}(\mathbb{X}_{T})=\sup_{n}\int_{\mathbb{X}_{T}}\phi
_{j}(x)g_{n}(x)\nu _{T}(dx)\leq \sup_{n}\int_{K^{j}}g_{n}(x)\nu _{T}(dx)\leq
c(N+\nu _{T}(K^{j}))<\infty .
\end{equation*}

Next, suppose that along a subsequence (without loss of generality, also
denoted by $\{\mu _{n}\}$), $\mu _{n}\rightarrow \mu $. We would like to
show that $\mu $ is of the form $\nu _{T}^{g}$, where $g\in S^{N}$. For this
we will use the lower semi-continuity property of relative entropy. The
result holds trivially if $\mu =0$. Suppose now $\mu \neq 0$. Then there
exists $j_{0}\in \mathbb{N}$ such that for all $j\geq j_{0}$, $\inf_{n\in
\mathbb{N}}\nu _{T}^{g_{n}}(\bar{O}_{j})>0$. For $j\geq j_{0}$, define
\begin{equation*}
c^{j}=\nu _{T}^{(j)}(\mathbb{X}_{T}),\quad \bar{\nu}_{T}^{j}={\nu _{T}^{(j)}}%
/{c^{j}};
\end{equation*}%
\begin{equation*}
c_{n}^{j}=\mu _{n}^{(j)}(\mathbb{X}_{T}),\quad \bar{\mu}_{n}^{j}={\mu
_{n}^{(j)}}/{c_{n}^{j}};
\end{equation*}%
\begin{equation*}
c_{\mu }^{j}=\mu ^{(j)}(\mathbb{X}_{T}),\quad \bar{\mu}^{j}={\mu ^{(j)}}/{%
c_{\mu }^{j}}.
\end{equation*}%
Then $\bar{\nu}_{T}^{j}$, $\bar{\mu}_{n}^{j}$ and $\bar{\mu}^{j}$ are
probability measures, and
\begin{equation*}
\begin{split}
R(\bar{\mu}_{n}^{j}||\bar{\nu}_{T}^{j})& =\frac{1}{c_{n}^{j}}\int_{\mathbb{X}%
_{T}}\left[ \log (g_{n}(x))+\log \left( \frac{c^{j}}{c_{n}^{j}}\right) %
\right] g_{n}(x)\phi _{j}(x)\nu _{T}(dx) \\
& =\frac{1}{c_{n}^{j}}\int_{\mathbb{X}_{T}}\left[ l(g_{n}(x))+g_{n}(x)-1%
\right] \phi _{j}(x)\nu _{T}(dx)+\log \left( \frac{c^{j}}{c_{n}^{j}}\right)
\\
& \leq \frac{1}{c_{n}^{j}}N+1-\frac{c^{j}}{c_{n}^{j}}+\log \left( \frac{c^{j}%
}{c_{n}^{j}}\right) .
\end{split}%
\end{equation*}%
Since $\mu _{n}^{(j)}\rightarrow \mu ^{(j)}$, we have $c_{n}^{j}\rightarrow
c_{\mu }^{j}$. Thus by the lower semi-continuity property of relative
entropy,
\begin{align}
R(\bar{\mu}^{j}||\bar{\nu}_{T}^{j})& \leq \liminf_{n\rightarrow \infty }R(%
\bar{\mu}_{n}^{j}||\bar{\nu}_{T}^{j})  \notag \\
& \leq \liminf_{n\rightarrow \infty }\left[ \frac{1}{c_{n}^{j}}N+1-\frac{%
c^{j}}{c_{n}^{j}}+\log \left( \frac{c^{j}}{c_{n}^{j}}\right) \right]  \notag
\\
& \leq \frac{1}{c_{\mu }^{j}}N+1-\frac{c^{j}}{c_{\mu }^{j}}+\log \left(
\frac{c^{j}}{c_{\mu }^{j}}\right)  \label{Eqn: re} \\
& <\infty .  \notag
\end{align}%
Thus $\mu ^{(j)}$ is absolutely continuous with respect to $\nu _{T}^{(j)}$.
Define $g^{j}=d\mu ^{(j)}/d\nu _{T}^{(j)}$, and $g=g^{j}\ \mbox{on}\ \bar{O}%
_{j}$. It is easily checked that $g$ is defined consistently, and that $\mu
=\nu _{T}^{g}$. Also by a direct calculation,
\begin{equation*}
R(\bar{\mu}^{j}||\bar{\nu}_{T}^{j})=\frac{1}{c_{\mu }^{j}}\int_{\mathbb{X}%
_{T}}l(g(v))\phi _{j}(v)\nu _{T}(dv)+1-\frac{c^{j}}{c_{\mu }^{j}}+\log
\left( \frac{c^{j}}{c_{\mu }^{j}}\right) .
\end{equation*}%
Combining the last display with (\ref{Eqn: re}), we have $\int_{\mathbb{X}%
_{T}}l(g(v))\phi _{j}(v)\nu _{T}(dv)\leq N$, for all $j$. Sending $%
j\rightarrow \infty $, we see that $g\in S^{N}$. The result follows.
\end{proof}

\subsection{Proof of Theorem \protect\ref{Thm:LDP2}}

\begin{proof}
Proof follows by modifying arguments for the lower bound and upper bound in
the proof of Theorem 4.2 of \cite{BDM09}.

\emph{Lower Bound}. Following the proof of Theorem 2.8 in \cite{BDM09}, it
is easy to see that $-\epsilon \log \mathbb{\bar{E}}\left( e^{-\epsilon
^{-1}F(Z^{\epsilon })}\right) $ is bounded below (actually equal to)
\begin{equation}
\inf_{\varphi \in \tilde{\mathcal{U}}}\bar{\mathbb{E}}\left[ {L}_{T}(\varphi
)+F\circ \mathcal{G}^{\epsilon }\left( \epsilon N^{\epsilon ^{-1}\varphi
}\right) \right] ,  \label{eq:131}
\end{equation}%
where $\tilde{\mathcal{U}}=\cup _{N\geq 1}\tilde{\mathcal{U}}^{N}$. The rest
of the proof for the lower bound is as in Theorem 4.2 of \cite{BDM09}.

\emph{Upper Bound}. Fix $\delta \in (0,1)$ and $\phi _{0}\in \mathbb{U}$
such that%
\begin{equation*}
I(\phi _{0})+F(\phi _{0})\leq \inf_{\phi \in \mathbb{U}}(I(\phi )+F(\phi
))+\delta \mbox{.}
\end{equation*}%
Choose $g\in \mathbb{S}_{\phi _{0}}$ such that ${L}_{T}(g)\leq I(\phi
_{0})+\delta $. Note that $g\in \mathbb{S}_{\phi _{0}}$ implies $\phi _{0}=%
\mathcal{G}^{0}\left( \nu _{T}^{g}\right) $. Define
\begin{numcases}{g_n(t,x)=}
\left[g(t,x)\vee \frac{1}{n}\right]\wedge n & for $x \in K_n$,\nonumber\\
1 & else.\nonumber
\end{numcases}Then $g_{n}\in \bar{\mathcal{A}}_{b,n}\subset \bar{\mathcal{A}}%
_{b}$. By the monotone convergence theorem, $L_{T}(g_{n})\uparrow L_{T}(g)$.

Recalling from the proof of the lower bound that $-\epsilon \log \mathbb{%
\bar{E}}\left( \exp \left( -\epsilon ^{-1}F(Z^{\epsilon })\right) \right) $
equals the expression in \eqref{eq:131},
\begin{eqnarray*}
\limsup_{\epsilon \rightarrow 0}-\epsilon \log \mathbb{\bar{E}}\left(
e^{-\epsilon ^{-1}F(Z^{\epsilon })}\right)  &\leq &{L}_{T}(g_{n})+\limsup_{%
\epsilon \rightarrow 0}\bar{\mathbb{E}}\left[ F\circ \mathcal{G}^{\epsilon
}\left( \epsilon N^{\epsilon ^{-1}g_{n}}\right) \right]  \\
&\leq &{L}_{T}(g_{n})+F\circ \mathcal{G}^{0}\left( \nu _{T}^{g_{n}}\right) ,
\end{eqnarray*}%
where the last inequality follows on observing that since $g_{n}\in \tilde{%
\mathcal{U}}^{N}$ for some $N$, we have by assumption that, for each fixed $n
$, $\mathcal{G}^{\epsilon }(\epsilon N^{\epsilon ^{-1}g_{n}})\Rightarrow
\mathcal{G}^{0}\left( \nu _{T}^{g_{n}}\right) $, as $\epsilon \rightarrow 0$%
. Sending $n\rightarrow \infty $, we have
\begin{eqnarray*}
\limsup_{\epsilon \rightarrow 0}-\epsilon \log \mathbb{\bar{E}}\left(
e^{-\epsilon ^{-1}F(Z^{\epsilon })}\right)  &\leq &{L}_{T}(g)+F\circ
\mathcal{G}^{0}\left( \nu _{T}^{g}\right)  \\
&\leq &I(\phi _{0})+\delta +F\circ \mathcal{G}^{0}\left( \nu _{T}^{g}\right)
\\
&=&I(\phi _{0})+F(\phi _{0})+\delta  \\
&\leq &\inf_{\phi \in \mathbb{U}}(I(\phi )+F(\phi ))+2\delta \mbox{.}
\end{eqnarray*}%
Since $\delta \in (0,1)$ is arbitrary the desired upper bound follows. This
completes the proof of the theorem.
\end{proof}

\subsection{Proof of Remark \protect\ref{rmk:exp}}

\begin{proof}
Let $E\in \mathcal{B}(\mathbb{X}_{T})$ be such that $\nu _{T}(E)<\infty $.
Fix $\delta _{2}\in (0,\infty) $, and define $F=\{(s,v)\in \mathbb{X}%
_{T}:||G(s,v)||_{0,-p}>\delta _{2}/\delta _{1}\}$. Then
\begin{equation*}
\begin{split}
\int_{E}e^{\delta _{2}||G(s,v)||_{0,-p}}\nu (dv)ds& =\int_{E\cap F}e^{\delta
_{2}||G(s,v)||_{0,-p}}\nu (dv)ds+\int_{E\cap F^{c}}e^{\delta
_{2}||G(s,v)||_{0,-p}}\nu (dv)ds \\
& \leq \int_{E\cap F}e^{\delta _{1}||G(s,v)||_{0,-p}^{2}}\nu
(dv)ds+e^{\delta _{2}^{2}/\delta _{1}}\int_{E\cap F^{c}}\nu (dv)ds \\
& \leq \int_{E}e^{\delta _{1}||G(s,v)||_{0,-p}^{2}}\nu (dv)ds+e^{\delta
_{2}^{2}/\delta _{1}}\nu _{T}(E)<\infty .
\end{split}%
\end{equation*}%
The remark follows.
\end{proof}

\subsection{Proof of Lemma \protect\ref{lem:exist}}

\begin{proof}
The proof proceeds through a standard Picard iteration argument. Define $%
x^{0}(t)=x_{0}$ for all $t\in \lbrack 0,T]$. Define $x^{n}(t)$ iteratively
as
\begin{equation*}
x^{n}(t)=x_{0}+\int_{0}^{t}a(s,x^{n-1}(s))ds+%
\int_{0}^{t}b(s,x^{n-1}(s))u(s,x^{n-1}(s))ds,\quad t\in \lbrack 0,T].
\end{equation*}%
Then
\begin{equation*}
\begin{split}
||x^{n}(t)||& \leq
||x_{0}||+\int_{0}^{t}||a(s,x^{n-1}(s))||ds+%
\int_{0}^{t}||b(s,x^{n-1}(s))u(s,x^{n-1}(s))||ds \\
& \leq ||x_{0}||+\int_{0}^{t}\kappa (1+||x^{n-1}(s)||)ds+\int_{0}^{t}\kappa
(1+||x^{n-1}(s)||)\sup_{y}||u(s,y)||ds \\
& \leq ||x_{0}||+\kappa (M+T)+\kappa
\int_{0}^{t}||x^{n-1}(s)||(1+\sup_{y}||u(s,y)||)ds.
\end{split}%
\end{equation*}%
Let $L=||x_{0}||+\kappa (M+T)$, $\alpha (s)=1+\sup_{y}||u(s,x)||$, and $%
\beta (t)=\int_{0}^{t}\alpha (s)ds$. Then a recursive argument shows that
for all $t\in \lbrack 0,T]$,
\begin{equation*}
||x^{n}(t)||\leq L+\kappa L\beta (t)+\frac{\kappa ^{2}L}{2}\beta
(t)^{2}+\cdots +\frac{\kappa ^{n}L}{n!}\beta (t)^{n},
\end{equation*}%
and thus
\begin{equation}
\sup_{n}\sup_{t\in \lbrack 0,T]}||x^{n}(t)||\leq Le^{\kappa \beta (T)}\leq
Le^{\kappa (M+T)}.  \label{eq:bnd}
\end{equation}%
Similarly
\begin{equation*}
\begin{split}
||x^{n}(t)-x^{n}(s)||& \leq
\int_{s}^{t}||a(s,x^{n-1}(r))||dr+%
\int_{s}^{t}||b(r,x^{n-1}(r))u(r,x^{n-1}(r))||dr \\
& \leq \kappa (1+Le^{\kappa (M+T)})(t-s)+\kappa (1+Le^{\kappa
(M+T)})\int_{s}^{t}\sup_{y}||u(r,y)||dr,
\end{split}%
\end{equation*}%
and therefore
\begin{equation*}
\lim_{\delta \rightarrow 0}\sup_{n}\sup_{|t-s|\leq \delta
}||x^{n}(t)-x^{n}(s)||=0.
\end{equation*}%
%
%
%
%
%
%
%
%
%
%
%
%
%
%
%
%
%
%
%
%
%
%
%absolute continuous property of integrals
Together with \eqref{eq:bnd} shows that the sequence $\{x^{n}\}$ is
pre-compact in $C([0,T]:\mathbb{R}^{d})$. Let $x$ be a limit point of some
subsequence of $\{x^{n}\}$. Then using the continuity properties of the
functions $a$, $b$ and $u$ with respect to $x$ and the dominated convergence
theorem, it is easy to check that $x$ satisfies \eqref{eq:int}. The lemma
follows.
\end{proof}

\subsection{Proof of (\protect\ref{u_cont})}

\begin{proof}
Let $y_{n}\rightarrow y$, $y_{n},y\in \mathbb{R}^{d}$. We will like to show
that $u(s,y_{n})\rightarrow u(s,y)$ for a.e. $s\in \lbrack 0,T]$. Note that,
since $\psi \in S^{M}$, $\int_{[0,T]\times \mathbb{X}}l(\psi (s,v))\nu
(dv)ds\leq M.$ Thus there exists $\mathbb{T}_{1}\subset \lbrack 0,T]$, with $%
\lambda _{T}(\mathbb{T}_{1}^{c})=0$ and such that
\begin{equation*}
\int_{\mathbb{X}}l(\psi (s,v))\nu (dv)<\infty ,\quad \forall s\in \mathbb{T}%
_{1}.
\end{equation*}%
Also, from arguments similar to those in the proof of Lemma \ref{lem:Gg},
\begin{equation*}
\int_{\mathbb{X}_{T}}||g^{d}(s,v)||_{0}|\psi (s,v)-1|\nu (dv)ds<\infty .
\end{equation*}%
Consequently, there exists $\mathbb{T}_{2}\subset \lbrack 0,T]$, with $%
\lambda _{T}(\mathbb{T}_{2}^{c})=0$ and such that
\begin{equation}
\int_{\mathbb{X}}||g^{d}(s,v)||_{0}|\psi (s,v)-1|\nu (dv)<\infty ,\quad
\forall s\in \mathbb{T}_{2}.  \label{ab2032}
\end{equation}%
Let $\mathbb{T}=\mathbb{T}_{1}\cap \mathbb{T}_{2}$ and fix $s\in \mathbb{T}$%
. Define $F_{\beta }(s)=\{v\in \mathbb{X}:|\psi (s,v)-1|\leq \beta \}$ for $%
\beta \in (0,\infty )$. Then
\begin{equation*}
\begin{split}
u(s,y_{n})& =\int_{\mathbb{X}\cap F_{\beta }}\frac{g^{d}(s,y_{n},v)}{%
1+||y_{n}||}(\psi (s,v)-1)\nu (dv)+\int_{\mathbb{X}\cap F_{\beta }^{c}}\frac{%
g^{d}(s,y_{n},v)}{1+||y_{n}||}(\psi (s,v)-1)\nu (dv) \\
& =u_{1}(s,y_{n})+u_{2}(s,y_{n}).
\end{split}%
\end{equation*}%
From part (c) of Remark \ref{rmk:l}, for all $v\in F_{\beta }(s)$,
\begin{equation*}
|\psi (s,v)-1|^{2}\leq c_{2}(\beta )l(\psi (s,v)).
\end{equation*}%
Thus $[\psi (s,\cdot )-1]1_{F_{\beta }(s)}(\cdot )\in L^{2}(\mathbb{X},\nu ;%
\mathbb{R})$. From assumption (a) in Lemma \ref{lem:ddim} we now see that,
for all such $s$, $u_{1}(s,y_{n})\rightarrow u_{1}(s,y)$, as $n\rightarrow
\infty $.

For $u_{2}(s,y_{n})$, we have
\begin{equation*}
\left\vert \frac{g^{d}(s,y_{n},v)}{1+||y_{n}||}(\psi (s,v)-1)\right\vert
\leq ||g^{d}(s,v)||_{0}|\psi (s,v)-1|.
\end{equation*}%
From \eqref{ab2032}, the term on the right hand side is $\nu $-integrable.
Furthermore, $\nu (F_{\beta }^{c})\rightarrow 0$ from the super linear
growth of $l$. Thus $u_{2}(s,y_{n})$ converges to 0, uniformly in $n$, as $%
\beta $ goes to $\infty $. The term $u_{2}(s,y)$ can be treated in a similar
manner. Thus we have shown that, for all $s\in \mathbb{T}$, $%
u(s,y_{n})\rightarrow u(s,y)$. Since $\lambda _{T}(\mathbb{T}^{c})=0$, the
result follows.
\end{proof}

\subsection{Proof of (\protect\ref{eq:18a}) when $h$ is a bounded and
measurable function}

\begin{proof}
We can assume without loss of generality that $\int_{K}g\nu _{T}(dsdv)\neq 0$
and $\int_{K}g_{n}\nu _{T}(dsdv)\neq 0$, for all $n\geq 1$. Define
probability measures $\tilde{\nu}^{n}$ and $\tilde{\nu}$ as follows:
\begin{equation*}
\tilde{\nu}^{n}(\cdot) =\frac{\nu _{T}^{g^{n}}(\cdot \cap K)}{m_{n}},\quad
\tilde{\nu}(\cdot)=\frac{\nu^g _{T}(\cdot \cap K)}{m}
\end{equation*}%
where $m_{n}=\int_{K}g_{n}\nu _{T}(dsdv)$ and $m=\int_{K}g\nu _{T}(dsdv)$.
If $\theta(\cdot) =\frac{\nu _{T}(\cdot \cap K)}{\nu _{T}(K)}$, then $\theta
$ is also a probability measure. We have
\begin{equation*}
\begin{split}
R(\tilde{\nu}^{n}||\theta )& =\int_{K}\log \left( \frac{\nu_T (K)}{m_{n}}%
g_{n}\right) \frac{1}{m_{n}}g_{n}\nu _{T}(dsdv) \\
& =\frac{1}{m_{n}}\int_{K}(l(g_{n})+g_{n}-1)\nu _{T}(dsdv)+\log \frac{\nu_T
(K)}{m_{n}} \\
& \leq \frac{N}{m_{n}}+1-\frac{\nu_T (K)}{m_{n}}+\log \frac{\nu_T (K)}{m_{n}}%
.
\end{split}%
\end{equation*}%
Noting that $m_{n}\rightarrow m$, we have that there exists constant $\alpha
$ such that $\sup_{n\in \mathbb{N}}R(\tilde{\nu}^{n}||\theta )\leq \alpha
<\infty $. Also note that $\tilde{\nu}^{n}$ converges weakly to $\tilde{\nu}$%
. From Lemma 2.8 of \cite{bodu98}, we have
\begin{equation*}
\frac{1}{m_{n}}\int_{[0,T]\times K}h(s,v)g_{n}(s,v)\nu _{T}(dvds)\rightarrow
\frac{1}{m}\int_{[0,T]\times K}h(s,v)g(s,v)\nu _{T}(dvds),
\end{equation*}%
which proves \eqref{eq:18a}.
\end{proof}

\subsection{Proof of It\^{o}'s formula in (\protect\ref{eq:ito})}

\begin{proof}
Here we will give the proof for a simpler case when $X_{t}$ satisfies the
following integral equation, the proof of \eqref{eq:ito} being very similar
to this case:
\begin{equation*}
{X}_{t}=X_{0}+\int_{0}^{t}A(s,{X}_{s})ds+\int_{0}^{t}\int_{\mathbb{X}}G(s,{X}%
_{s-},v)\tilde{N}(dsdv).
\end{equation*}%
For $j\in \mathbb{N}$,
\begin{equation*}
{X}_{t}[\theta _{p}\phi _{j}]=X_{0}[\theta _{p}\phi _{j}]+\int_{0}^{t}A(s,{X}%
_{s})[\theta _{p}\phi _{j}]ds+\int_{0}^{t}\int_{\mathbb{X}}G(s,{X}%
_{s-},v)[\theta _{p}\phi _{j}]\tilde{N}(dsdv).
\end{equation*}%
Note that
\begin{equation*}
{X}_{t}[\theta _{p}\phi _{j}]=\langle {X}_{t},\phi _{j}\rangle _{-p}=||\phi
_{j}||_{-p}\langle {X}_{t},\phi _{j}^{-p}\rangle _{-p},
\end{equation*}%
so
\begin{equation*}
\sum_{j=1}^{\infty }||\phi _{j}||_{p}^{2}({X}_{t}[\theta _{p}\phi
_{j}])^{2}=\sum_{j=1}^{\infty }\langle {X}_{t},\phi _{j}^{-p}\rangle
_{-p}^{2}=||X_{t}||_{-p}^{2}.
\end{equation*}

If $\xi _{j}(t)={X}_{t}[\theta _{p}\phi _{j}]$, then $\xi _{j}(t)$ satisfies
\begin{equation*}
\xi _{j}(t)=\xi _{j}(0)+\int_{0}^{t}a^{j}(s)ds+\int_{0}^{t}\int_{\mathbb{X}%
}b^{j}(s,v)\tilde{N}(dsdv).
\end{equation*}%
where $a^{j}(s)=A(s,{X}_{s})[\theta _{p}\phi _{j}]$ and $b^{j}(s,v)=G(s,{X}%
_{s-},v)[\theta _{p}\phi _{j}]$. Applying It\^{o}'s formula (cf. Theorem
2.5.1 of \cite{Ikeda}) to the real valued semimartingale $\xi _{j}(t)$, we
have
\begin{equation}
\begin{split}
\xi _{j}^{2}(t)&= \xi _{j}^{2}(0)+2\int_{0}^{t}a^{j}(s)\xi
_{j}(s)ds+2\int_{0}^{t}\int_{\mathbb{X}}b^{j}(s,v)\xi _{j}(s-)\tilde{N}(dsdv)
\\
&\quad +\int_{0}^{t}\int_{\mathbb{X}}[b^{j}(s,v)]^{2}\tilde{N}%
(dsdv)+\int_{0}^{t}\int_{\mathbb{X}}[b^{j}(s,v)]^{2}\nu (dv)ds.
\end{split}
\label{eq:ito2}
\end{equation}

Note that $||X_t||_{-p}^2=\sum_{j=1}^\infty ||\phi_j||_{p}^2\xi_j^2(t)$. So
for the second term in \eqref{eq:ito2}, we have
\begin{equation*}
\begin{split}
\sum_{j=1}^\infty ||\phi_j||_{p}^2 a^j(s)\xi_j(s)&= \sum_{j=1}^\infty
||\phi_j||_{p}^2 A(s,{X}_s)[\theta_p\phi_j]{X}_s[\theta_p\phi_j] \\
&= A(s,{X}_s)\left[\sum_{j=1}^\infty ||\phi_j||_{p}^2{X}_s[\theta_p\phi_j]%
\theta_p\phi_j\right] \\
&= A(s,{X}_s)\left[\sum_{j=1}^\infty ||\phi_j||_{p}^2\langle {X}_s,\phi_j
\rangle_{-p}||\phi_j||_{-p}^2\phi_j\right] \\
&= A(s,{X}_s)\left[\sum_{j=1}^\infty \langle {X}_s,\phi_j^{-p}
\rangle_{-p}\phi_j^p\right] \\
&= A(s,{X}_s)[\theta_pX_s].
\end{split}%
\end{equation*}
Also, we have
\begin{equation*}
\begin{split}
\sum_{j=1}^\infty ||\phi_j||_{p}^2 b^j(s,v)\xi_j(s-)&= \sum_{j=1}^\infty
||\phi_j||_{p}^2 G(s, {X}_{s-},v)[\theta_p\phi_j]{X}_{s-}[\theta_p\phi_j] \\
&= \sum_{j=1}^\infty ||\phi_j||_{p}^2 \langle G(s, {X}_{s-},v),
\phi_j\rangle_{-p}\langle {X}_{s-}, \phi_j\rangle_{-p} \\
&= \sum_{j=1}^\infty \langle G(s, {X}_{s-},v),
\phi_j^{-p}\rangle_{-p}\langle {X}_{s-}, \phi_j^{-p}\rangle_{-p} \\
&= \langle G(s, {X}_{s-},v), {X}_{s-}\rangle_{-p}.
\end{split}%
\end{equation*}
Finally, notice that
\begin{equation*}
\begin{split}
\sum_{j=1}^\infty ||\phi_j||_{p}^2 [b^j(s,v)]^2&= \sum_{j=1}^\infty
||\phi_j||_{p}^2 \left(G(s, {X}_{s-},v)[\theta_p\phi_j]\right)^2 \\
&= \sum_{j=1}^\infty ||\phi_j||_{p}^2 \left(\langle G(s, {X}_{s-},v),
\phi_j\rangle_{-p}\right)^2 \\
&= \sum_{j=1}^\infty \left(\langle G(s, {X}_{s-},v), \phi_j^{-p}\rangle_{-p}
\right)^2 \\
&= ||G(s, {X}_{s-},v)||^2_{-p}.
\end{split}%
\end{equation*}
Combining the above equalities with \eqref{eq:ito2}, we have
\begin{equation*}
\begin{split}
||X_t||_{-p}^2 &= ||X_0||_{-p}^2 + 2\int_0^t A(s,{X}_s)[\theta_pX_s] ds +
2\int_0^t \int_\mathbb{X} \langle G(s, {X}_{s-},v), {X}_{s-}\rangle_{-p}%
\tilde{N}(dsdv) \\
&\quad +\int_0^t \int_\mathbb{X} ||G(s, {X}_{s-},v)||^2_{-p}\tilde{N}(dsdv)
+\int_0^t \int_\mathbb{X} ||G(s, {X}_{s-},v)||^2_{-p}\nu(dv)ds.
\end{split}%
\end{equation*}
\end{proof}

%%%%%%%%%%%%%%%%%%%%%%%%%%%%%%%%%%%%%%%%%%%%%%%%%%%%%%%%%%%%%%%%%%
%%%%%%%%%%%%%%%%%%%%%%%%%%%%%%%%%%%%%%%%%%%%%%%%%%%%%%%%%%%%%%%%%%
%%%%%%%%%%%%%%%%%%%%%%%%%%%%%%%%%%%%%%%%%%%%%%%%%%%%%%%%%%%%%%%%%%
%%%%%%%%%%%%%%%%%%%%%%%%%%%%%%%%%%%%%%%%%%%%%%%%%%%%%%%%%%%%%%%%%%

\vspace{\baselineskip} \bigskip

\noindent \textsc{A. Budhiraja and J. Chen\newline
Department of Statistics and Operations Research\newline
University of North Carolina\newline
Chapel Hill, NC 27599, USA }

\vspace{\baselineskip}

\noindent \textsc{P. Dupuis\newline
Lefschetz Center for Dynamical Systems\newline
Division of Applied Mathematics\newline
Brown University\newline
Providence, RI 02912, USA }


\begin{thebibliography}{99}
\bibitem{Azen} R. Azencott, Grandes deviations et applications. \textit{\'{E}%
cole d'\'{E}t\'{e} de Probabilit\'{e}s de Saint-Flour VII. Lecture Notes in
Math}, 774:1-176, 1980.

\bibitem{BeMi} H. Bessaih and A. Millet. Large deviation principle and
inviscid shell models. \textit{Electronic Journal of Probability,}
14:2551-2579, 2009.

\bibitem{bodu98} M. Boue and P. Dupuis. \newblock A variational
representation for certain functionals of {B}rownian motion.
\newblock {\em
Ann. Probab.}, 26(4):1641-1659, 1998.

\bibitem{BoDuEl} M. Bou\'{e}, P. Dupuis and R.S. Ellis. Large deviations for
small noise diffusions with discontinuous statistics. \textit{Probability
Theory and Related Fields}, 116(1):125-149, 2000.

\bibitem{buddup} A.~Budhiraja and P.~Dupuis. \newblock A variational
representation for positive functional of infinite dimensional {B}rownian
motions. \newblock {\em Probability and Mathematical
Statistics}, 20:39--61, 2000.

\bibitem{BuDuFi} A.~Budhiraja, P.~Dupuis and M. Fischer. Large deviation
properties of weakly interacting processes via weak convergence methods.
Under Revision for \textit{Ann. Prob.}

\bibitem{buddupmar} A.~Budhiraja, P.~Dupuis and V.~Maroulas. \newblock Large
deviations for infinite dimensional stochastic dynamical systems.
\newblock{\em Annals of
Probability.}, 36(4):1390-1420, 2008.

\bibitem{BuDuMa2} A.~Budhiraja, P.~Dupuis and V.~Maroulas. Large deviations
for stochastic flows of diffeomorphisms. \textit{Bernoulli}, {36}%
(1):234-257, 2010.

\bibitem{BDM09} A. Budhiraja, P. Dupuis and V. Maroulas. Variational
representations for continuous time processes. \textit{\ Annales de
l'Institut Henri Poincar\'e(B) Probabilit\'es et Statistiques}, {47}%
(3):725-747, 2011.

\bibitem{ChMi} I. Chueshov and A. Millet. Stochastic 2D hydrodynamical type
systems: Well posedness and large deviations. \textit{Appl. Math. Optim.},
61(3):379-420, 2010.

\bibitem{deAc} A. de Acosta. Large deviations for vector valued L\'{e}vy
processes. \textit{Stoch. Proc. App.} {51}: 75--115, 1994.

\bibitem{doo} J. Doob. \textit{Measure Theory}. Springer-Verlag, 1993.

\bibitem{DuDuGa} A. Du, J. Duan and H. Gao. Small probability events for
two-layer geophysical flows under uncertainty. \textit{Preprint}.

\bibitem{DuMi} J.Duan and A.Millet. Large deviations for the Boussinesq
equations under random influences. \textit{Stochastic Processes and their
Applications, }119(6):2052-2081, 2009.

\bibitem{Dupuis97} P. Dupuis and R. Ellis. \textit{A Weak Convergence
Approach to the Theory of Large Deviations.} Wiley, New-York, 1997.

\bibitem{kallianpur1994} G. Hardy, G. Kallianpur, S. Ramasubramanian and J.
Xiong. The existence and uniqueness of solutions of nuclear space-valued
stochastic differential equations driven by Poisson random measures. \textit{%
Stochastics,} 50:85-122, 1994.

\bibitem{Ikeda} N.~Ikeda and S.~Watanabe.
\newblock {\em Stochastic
Differential Equations and Diffusion Processes}. \newblock North-Holland
Publishing Co., Amsterdam, 1981.

\bibitem{Jac} J. Jacod. A general theorem of representation for martingales.
\textit{Proceedings of Symposia in Pure Mathematics}, Volume 31, 1977.

\bibitem{JacShi} J. Jacod and A.N. Shiryaev. \textit{Limit Theorems for
Stochastic Processes}. Springer-Verlag, 1987.

\bibitem{Kallianpur95} G. Kallianpur and J. Xiong. Stochastic Differential
Equations in Infinite Dimensional Spaces. \textit{\ Institute of
Mathematical Statistics: Lecture Notes - Monograph Series,} Vol. 26, 1995.

\bibitem{Kurtz} T.G.Kurtz. Approximation of Population Processes. \textit{%
Society of industry and applied math}, 1981.

% \bibitem{Kush1} H.J. Kushner. Numerical Methods for Stochastic Control
% Problems in Continuous Time.\textit{\ SIAM J. Control and Optimization},
% 28(5): 999--1048, 1990.
%
% \bibitem{kusdup} H.J. Kushner and P. Dupuis. \textit{Numerical Methods for
% Stochastic Control Problems in Continuous Time.} Second Edition, Springer,
% New-York, 2001.

\bibitem{LiPu} R.S. Liptser and A.A. Pukhalskii. Limit theorems on large
deviations for semimartingales, \textit{Stochastics Stochastic Reports},
38:201--249, 1992.

\bibitem{Liu} W. Liu. Large deviations for stochastic evolution equations
with small multiplicative noise. \textit{App. Math Opt.}, 61(1):27-56, 2010.

\bibitem{MaSrSu} U. Manna, S.S. Sritharan and P. Sundar. Large deviations
for the stochastic shell model of turbulence. \textit{Nonlinear Differential
Equations and Applications}, 16(4):493--521, 2009.

\bibitem{ReZh1} J. Ren and X. Zhang. Freidlin-Wentzell's large deviations
for homeomorphism flows of non-Lipschitz SDEs. \textit{Bull. Sci. Math.},
129:643--655, 2005.

\bibitem{ReZh2} J. Ren and X. Zhang. Schilder theorem for the Brownian
motion on the diffeomorphism group of the circle. \textit{J. Funct. Anal.},
224(1):107--133, 2005.

\bibitem{RoZhZh} M. Rockner, T. Zhang and X. Zhang. Large deviations for
stochastic tamed 3D Navier-Stokes equations. \textit{App. Math Opt.}, {61}%
(2):267-285, 2010.

\bibitem{RocZha} M. Rockner and T. Zhang. Stochastic evolution equations of
jump type: Existence, uniqueness and large deviation principles. \textit{%
Potential Analysis}, 26:255--279, 2007.

\bibitem{Roy} H.L. Royden. \textit{Real Analysis}. \textit{Prentice Hall},
1988.

\bibitem{SrSu} S.S. Sritharan and P. Sundar. Large deviations for the two
dimensional Navier-Stokes equations with multiplicative noise. \textit{%
Stochastic Process. Appl.}, 116:1636-1659, 2006.

\bibitem{Tamai1976} N. Tamai, T. Futagami and M. Yatsuzuka. FEM coupled with
LP for water pollution control. \textit{\ Journal of the Hydraulics Division}%
, 102:8881-897, 1976.

\bibitem{WaDu} W. Wang and J. Duan. Reductions and deviations for stochastic
partial differential equations under fast dynamical boundary conditions.
\textit{Stoch. Anal. Appl.}, 27(3):431--459, 2009.

\bibitem{Wentz} A. D. Wentzell. \textit{Limit Theorems on Large Deviations
for Markov Random Processes}, Reidel, Dordrecht, 1989.

\bibitem{YaHo} D. Yang and Z. Hou. Large deviations for the stochastic
derivative Ginzburg--Landau equation with multiplicative noise. \textit{%
Physica D: Nonlinear Phenomena}, 237(1):82-91, 2008.

\bibitem{zha} X. Zhang. \newblock Clark-{O}cone formula and variational
representation for {P}oisson functionals. \textit{Annals of Probab.},
37(2):506-529, 2009.

\bibitem{zha2} X. Zhang. A variational representation for random functionals
on abstract Wiener spaces, \textit{\ J. Math. Kyoto Univ.}, 9(3):475-490,
2009.

\bibitem{zha3} X. Zhang. Euler schemes and large deviations for stochastic
Volterra equations with singular kernels. \textit{Journal of Differential
Equations,} 244(9):2226-2250, 2008.

\bibitem{zha4} X.Zhang. Stochastic Volterra equations in Banach spaces and
stochastic partial differential equations. \textit{Journal of Functional
Analysis,} 258(4):1361-1425, 2010.
\end{thebibliography}
\end{document}